\newtheorem{lemma}{Lemma}[section]
\newtheorem{proposition}[lemma]{Proposition}
\newtheorem{corollary}[lemma]{Corollary}
\newtheorem{Def}[lemma]{Definition}
\newtheorem{Lem}[lemma]{Lemma}
\newtheorem{Prop}[lemma]{Proposition}
\newtheorem{Thm}[lemma]{Theorem}
\newtheorem{Cor}[lemma]{Corollary}
{


}
\theoremstyle{definition}
\newtheorem{example}[lemma]{Example}
\theoremstyle{remark}
\numberwithin{equation}{section}
\newtheorem*{Rmk}{Remark}
\def\bmat{\begin{pmatrix}}
\def\emat{\end{pmatrix}}
\def\-{\smallsetminus}
\def\i{\iota}
\def\~{\widetilde}
\def\ol{\overline}
\def\phi{\varphi}
\def\deg{\text{deg }}
\def\<{\langle}
\def\>{\rangle}
\def\Gr{\operatorname {Gr}}
\def\gr{\operatorname {gr}}
\def\GrRep{\operatorname {GrRep}}
\def\Fin{\operatorname {Fin}}
\def\Tors{\operatorname {Tors}}
\def\tors{\operatorname {tors}}
\def\QGr{\operatorname {QGr}}
\def\qgr{\operatorname {qgr}}
\def\End{\operatorname {End}}
\def\Hom{\operatorname {Hom}}
\def\max{\operatorname {max}}
\def\Proj{\operatorname {Proj}}
\def\Mod{\operatorname {Mod}}
\def\Ext{\operatorname {Ext}}
\def\End{\operatorname {End}}
\def\GKdim{\operatorname {GKdim}}
\def\K0{\operatorname {K_0}}
\def\supp{\operatorname {supp}}
\def\11{\text{\bf 1}}
\def\NN{{\mathbb N}}
\def\PP{{\mathbb P}}
\def\QQ{{\mathbb Q}}
\def\ZZ{{\mathbb Z}}
\def\cal{\mathcal}
\def\cC{{\cal C}}
\def\cO{{\cal O}}
\def\cP{{\cal P}}
\def\sM{{\mathscr M}}
\def\sP{{\mathscr P}}
\def\mC{{\mathfrak C}}
\def\mD{{\mathfrak D}}
\def\bv{{\bf v}}
\def\bp{{\bf p}}
\title[Path algebras of finite GK-dimension.]{Path algebras and monomial algebras of finite GK-dimension as noncommutative homogeneous coordinate rings.}
\begin{document}

\author{Cody Holdaway}

\address{Department of Mathematics, Box 354350, Univ.
Washington, Seattle, WA 98195}

\email{codyh3@math.washington.edu}

\keywords{quotient category; representations of quivers; path algebras; monomial algebras; ext-quivers.}

\subjclass[2010]{14A22, 16B50, 16G20, 16W50}

\begin{abstract}
This article sets out to understand the categories $\QGr A$ where $A$ is either a monomial algebra or a path algebra of finite Gelfand-Kirillov dimension. The principle questions are: 1) What is the structure of the point modules up to isomorphism in $\QGr A$? 2) When is $\QGr A \equiv \QGr A'$? These two questions turn out to be intimately related.

It is shown that up to isomorphism in $\QGr A$, there are only finitely many point modules and these give all the simple objects in the category. Then, a finite quiver $E_A$, which can be constructed from the algebra $A$ rather simply, is associated to the category $\QGr A$. It is shown that the vertices of $E_A$ are in bijection with the point modules and the arrows are determined by the extensions between point modules. Lastly, it is shown that $\QGr A\equiv \QGr A'$ if and only if $E_A=E_{A'}$.
\end{abstract}

\maketitle

\pagenumbering{arabic}

\setcounter{section}{0}

\section{Introduction.}

Let $k$ be a field, the term algebra will mean $k$-algebra.

One of the basic methods for understanding noncommutative graded algebras geometrically is to understand the structure of their point modules. This idea was used effectively in \cite{ATV1} to better understand the three dimensional AS-regular algebras originally studied in \cite{AS}. 

Given an graded $k$-algebra $A=\oplus_{i\geq 0}A_i$, a point module over $A$ is a cyclic graded right $A$-module $M=\oplus_{i\geq 0} M_i$ such that $\dim_kM_i=1$ for all $i\geq 0$. Point modules are so named as they are thought of as the ``closed'' points of the noncommutative projective scheme determined by $A$. Let $\Gr A$ denote the category of $\ZZ$-graded right $A$-modules with degree preserving homomorphisms and $\QGr A$ the quotient of $\Gr A$ by the Serre subcategory $\Tors A$ consisting of all the torsion modules. Point modules determine simple objects in $\QGr A$, however, there may be other simple objects not coming from point modules.

Let $S(a,b,c)$ denote the $3$-dimensional Sklyanin algebra with parameters $(a,b,c)$ (assume $k=\ol{k}$). If
$$
(a,b,c)\notin \mD:=\{(0,0,1), (0,1,0), (1,0,0)\} \sqcup \{(a,b,c)\;|\; a^3=b^3=c^3\},
$$ 
then $S(a,b,c)$ is a Noetherian domain with finite GK-dimension, and the point modules are parameterized nicely by an elliptic curve with an automorphism. However, Smith shows in \cite{Sm3} that when $S(a,b,c)$ is a degenerate Sklyanin algebra, that is, if $(a,b,c) \in \mD$, then $S(a,b,c)$ is a monomial algebra with $3$ generators $u,v,w$ such that either $u^2=v^2=w^2=0$ or $uv=vw=wu=0$. In both cases, $S(a,b,c)$ is not Noetherian and has infinite GK-dimension. For the degenerate Sklyanin algebras, the point modules are no longer parameterized by an elliptic curve, in fact, they don't seem to be nicely parameterized at all.       

As another example, consider the quantum planes 
$$
A_q:=k\<x,y\>/(xy-qyx),
$$ 
where $q\in k$. If $q\neq 0$, then $A_q$ is a Noetherian domain with finite GK-dimension and the point modules are parameterized by $\PP^1$. When $q\neq 0$, point modules are isomorphic in $\QGr A_q$ if and only if they are isomorphic in $\Gr A_q$. Thus, the point modules are still parameterized by $\PP^1$ up to isomorphism in $\QGr A_q$. When $q=0$, $A_q=k\<x,y\>/(xy)$ is a monomial algebra. However, unlike the case for degenerate Sklyanin algebras above, $A_0$ still has finite GK-dimension. Even though there are infinitely many point modules up to isomorphism in $\Gr A_0$, it will be shown there are only two up to isomorphism in $\QGr A_0$. 

Unlike the degenerate Sklyanin algebras, the structure of point modules for the degenerate quantum plane $A_0$ in some sense trivializes. The distinction seems to be that $A_0$ still has finite GK-dimension while the degenerate Sklyanin algebras have infinite GK-dimension. 

By \cite{HS0,HS1}, given any finitely presented monomial algebra (hereafter just monomial algebra), we can find a quiver $Q_A$ and a graded morphism $f:A \to kQ_A$ such that the functor $-\otimes_A kQ_A$ induces an equivalence of categories $\QGr A \equiv \QGr kQ_A$. Moreover, if $A$ has finite GK-dimension then so does $kQ_A$. 

If $M$ is a point module over $kQ_A$, it can be shown that $M$ is a point module when viewed as a module over $A$ via $f$. The only property to check is that $M$ is cyclic over $A$. If $A$ has finite GK-dimension, then it will be shown that every point module over $A$ comes from a point module over $kQ_A$ up to isomorphism in $\QGr A$. 

Since graded modules over path algebras can be viewed as graded representations of the quiver, it is useful to work with the algebras $kQ_A$ instead of $A$ itself. Hence, the rest of the paper will focus on path algebras of finite GK-dimension.  

If $kQ$ is a path algebra of finite GK-dimension, there is associated to each cyclic vertex $v$ of $Q$ a canonical point module which will be denoted by $\cO_v$. Here are the main results:

\begin{Thm}[See Theorem \ref{thm.simpleispoint}] \label{thm.main1Q}
Let $kQ$ be a path algebra of finite GK-dimension. Then every simple object of $\QGr kQ$ is isomorphic to a unique $\cO_v$ for some cyclic vertex $v$.
\end{Thm}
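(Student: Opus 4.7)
The plan is to take a simple $S = \pi M \in \QGr kQ$ and construct a graded map $\cO_w \to M$ (up to a degree shift) which becomes an isomorphism in $\QGr$, for some cyclic vertex $w$. The argument proceeds in three phases: reduction to a cyclic generator, locating the target cycle using finite GK-dim, and verifying the map in sufficiently high degree.

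First, since $\pi M$ is simple and non-zero, every non-torsion graded submodule of $M$ represents $S$. I would pick a homogeneous $x \in M$ such that $x(kQ)$ is non-torsion (such $x$ exists because $M$ is the sum of its cyclic submodules and is itself non-torsion), and decompose $x = \sum_u x e_u$ by vertex idempotents. Simplicity lets me restrict to a single non-torsion vertex component, so after shifting I may assume $M$ is cyclic, generated in degree zero by some $m = m e_v$; in particular there is a graded surjection $e_v (kQ) \twoheadrightarrow M$.

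Second, I locate the target cyclic vertex. Finite GK-dim of $kQ$ forces every vertex of $Q$ to lie on at most one oriented cycle. Consequently, for any acyclic vertex $u$, the set of paths from $v$ to $u$ is finite, so $M e_u$ is finite-dimensional and hence torsion. Let $\mathcal{C}$ denote the set of cycles $C$ for which some vertex $u$ on $C$ has $M e_u$ non-torsion. A simplicity argument, comparing in high degree the part of $M$ supported on one cycle against its complement, forces $|\mathcal{C}| = 1$ (otherwise one obtains, after truncation to a high enough degree, a decomposition of $\pi M$ into non-trivial disjoint pieces). Fix this single cycle $C$, a vertex $w$ on it reachable from $v$ via a path $q$, and set $y := m \cdot q \in M_{|q|} e_w$; the submodule $y(kQ)$ is non-torsion, so $\pi(y(kQ)) = S$.

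Third, I construct a graded map $\psi \colon \cO_w \to M$, up to the obvious degree shift, sending the canonical generator of $\cO_w$ to $z := y c^N \in M$, where $c$ is the loop traversing $C$ once at $w$ and $N$ is to be chosen large. Well-definedness requires $z \cdot b = 0$ in $M$ for every arrow $b$ at $w$ that departs from $C$, and similarly at every subsequent cycle-vertex. By the previous paragraph, every vertex $u \notin C$ has $M e_u$ concentrated in bounded degree; choosing $N$ so that $|q| + N\ell(C) + 1$ exceeds this bound forces all such $z \cdot b$ to vanish automatically. Hence $\psi$ is well-defined with non-torsion image, so $\pi(\im \psi) = S$. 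Because $\pi \cO_w$ is itself simple in $\QGr kQ$ (any non-zero graded submodule of $\cO_w$ is a truncation $\cO_w[\geq n]$, which represents the same object in the quotient), the map $\psi$ is injective in $\QGr$, giving $\pi \cO_w \cong S$. Uniqueness of $w$ follows because $(\cO_w)_n$ is supported at a specific vertex of $C$ determined by $n \pmod{\ell(C)}$, distinguishing distinct cyclic vertices in high degrees. The main obstacle is the single-cycle reduction of the second paragraph: arrows leaving a cycle $C$ prevent the elements of $M$ ``supported on $C$'' from forming a literal submodule, so one must argue via essential support in high degrees and use simplicity of $\pi M$ to rule out multiple contributing cycles.
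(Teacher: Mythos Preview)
Your second paragraph contains a false step: it is not true that for an acyclic vertex $u$ the set of paths from $v$ to $u$ is finite. Take the quiver with a loop at $v$ and a single arrow $v\to u$; then $u$ is acyclic but there are infinitely many paths from $v$ to $u$ (loop $n$ times, then cross). Consequently the assertion in your third paragraph that ``every vertex $u\notin C$ has $Me_u$ concentrated in bounded degree'' is unsupported, and the well-definedness of $\psi$ does not follow from what you have written. The single-cycle reduction $|\mathcal{C}|=1$ that you flag as the main obstacle is indeed a real gap in your outline, and it is also unnecessary.

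The paper bypasses both problems with one idea you are missing: exploit the partial order on the set $\mC(Q)$ of simple cycles. After replacing $M$ by $M/\tau M$ (so $M$ is torsion free), choose a simple cycle $C$ that is \emph{maximal} in $\mC(Q)$ among those meeting the support of $M$, and take a nonzero homogeneous $m=me_{v_0}$ with $v_0\in C$. If $b$ is any arrow leaving $C$ and $m':=m\cdot p^{\ell}a_1\cdots a_i b$ were nonzero, then $m'$ is non-torsion by torsion-freeness, so some long path carries $m'$ into a cycle $C'$ with $C\prec C'$ and $C'$ meeting $\supp M$, contradicting maximality of $C$. Hence all such $m'$ vanish outright, the cyclic module $mkQ$ is one-dimensional in every high degree, and $\pi^*(mkQ)\cong\pi^*\cO_w$ for some cyclic vertex $w$ (via Propositions \ref{prop.everypointiscyclic} and \ref{prop.high1sispoint}). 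Simplicity of $\pi^*M$ is invoked only at the very end, to upgrade the resulting nonzero map $\pi^*\cO_w\to\pi^*M$ to an isomorphism. Choosing a maximal cycle replaces your attempted decomposition argument entirely and makes the vanishing you need exact rather than merely asymptotic in $N$.
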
 

Thus, the isomorphism classes of simple objects of $\QGr kQ$ are in bijection with the cyclic vertices of $Q$ of which there are only finitely many. In particular, the point modules $\cO_v$ give all the point modules up to isomorphism in $\QGr kQ$.

Outside of being in bijection with the simple objects of $\QGr kQ$, the cyclic vertices determine another structure which plays a big role in determining the structure of the category $\QGr kQ$.

\begin{Def}
Let $kQ$ be a path algebra of finite GK-dimension. The {\bf Ext-quiver} of $\QGr kQ$, denoted by $E_Q$, is defined by:
\begin{enumerate}
\item The vertices of $E_Q$ are the cyclic vertices of $Q$.
\item Let $v$ and $w$ be cyclic vertices and let $n$ and $m$ be the lengths of the simple cycles which contain $v$ and $w$ respectively. There is $1$ arrow $v\to w$ in $E_Q$ if there is a path in $Q$ from $v$ to $w$ whose length is a positive multiple of $nm$. 
\end{enumerate}
If $A$ is a monomial algebra of finite GK-dimension, define $E_A$ to be the quiver $E_{Q_A}$ where $Q_A$ is the Ufnarovskii graph of $A$.
\end{Def}

The next two theorems show the Ext-quiver is determined by $\QGr kQ$ and determines $\QGr kQ$ up to equivalence. 

\begin{Thm}[See Theorem \ref{thm.extofpoints}] \label{thm.main2Q}
Let $kQ$ be a path algebra of finite GK-dimension. Then given two cyclic vertices $v$ and $w$,
$$
\Ext^1_{\QGr kQ}(\cO_v,\cO_w)\neq 0
$$
if and only if there is an arrow from $v$ to $w$ in $E_Q$.
\end{Thm}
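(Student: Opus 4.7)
My plan is to prove both directions, using explicit module constructions for $(\Leftarrow)$ and the hereditary structure of $kQ$ together with the adjunction between $\pi$ and $\omega$ for $(\Rightarrow)$.

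For $(\Leftarrow)$, suppose $p\colon v\to w$ is a path in $Q$ of length $\ell=Knm$ for some $K\ge 1$. I would build a graded $kQ$-module $M$ producing a non-split extension of $\cO_v$ by $\cO_w$ in $\QGr kQ$ as follows. Take $M$ to be the quotient of $e_v kQ$ in which the surviving paths from $v$ are precisely (i) the paths along the simple cycle at $v$, and (ii) the path $p$ followed by any path along the simple cycle at $w$. This yields an exact sequence in $\Gr kQ$ of the form
$$0\longrightarrow \cO_w(\ell)\longrightarrow M\longrightarrow \cO_v\longrightarrow 0,$$
where $\cO_w(\ell)$ is the shift whose generator lies in degree $\ell$. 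Since $m\mid\ell$, the natural inclusion $\cO_w(\ell)\hookrightarrow\cO_w$ has torsion cokernel (supported in finitely many degrees), so $\cO_w(\ell)\simeq\cO_w$ in $\QGr kQ$, producing a short exact sequence $0\to\cO_w\to\pi M\to\cO_v\to 0$ in $\QGr kQ$.

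To see this sequence is non-split in $\QGr kQ$, observe that any splitting arises from a graded morphism $\sigma\colon(\cO_v)^{\ge k}\to M$ for some $k\gg 0$ whose composition with $M\to\cO_v$ is the canonical inclusion. Identifying $(\cO_v)^{\ge k}$ with a shift of the cycle module at $v^{(k\bmod n)}$ and tracking how $\sigma$ must commute with the $kQ$-action along the cycle at $v$ and along $p$, one derives a contradiction from $n\mid\ell$: after iterating the cycle at $v$ sufficiently many times one lands at vertex $v$ in some degree $k'\equiv 0\pmod n$, from which the action of $p$ produces the generator of the $\cO_w(\ell)$-component in $M$, but any $\sigma$ respecting the projection to $\cO_v$ would need to send this element to zero in $\cO_v$, an impossibility.

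For $(\Rightarrow)$, use that $kQ$ is hereditary, so each $\cO_v$ and its truncations have projective dimension at most one, with minimal resolution
$$0\longrightarrow \bigoplus_{(i,b)} e_{t(b)}kQ(-i-1)\longrightarrow e_v kQ\longrightarrow \cO_v\longrightarrow 0$$
summed over non-cycle arrows $b$ emanating from cycle vertices $v^{(i)}$ of the simple cycle at $v$. Via the colimit description
$$\Ext^1_{\QGr kQ}(\cO_v,\cO_w)=\varinjlim_k \Ext^1_{\Gr kQ}\bigl((\cO_v)^{\ge k},\cO_w\bigr),$$
a nonzero extension class corresponds to a compatible system of non-cycle arrows $b\colon v^{(i)}\to t(b)$ (for arbitrarily large $k$) with $t(b)$ lying on the cycle at $w$ at the position prescribed by the cycle data shifted by $k$. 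The persistence of such a system as $k$ grows forces the source-position and target-position to be compatible modulo both $n$ and $m$; combining the resulting non-cycle arrow with appropriate cycle traversals at $v$ and $w$ produces a path in $Q$ from $v$ to $w$ whose total length is a positive multiple of $nm$.

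The main obstacle is the combinatorial analysis for $(\Rightarrow)$: one must carefully track the shifts in the colimit computation of $\Ext^1_{\QGr kQ}$, separate the $\Gr$-level extensions that become trivial in $\QGr kQ$ from those that persist, and check that the simultaneous cycle-compatibility conditions modulo $n$ and modulo $m$ combine to give divisibility by $nm$ rather than merely by $\mathrm{lcm}(n,m)$. The finite GK-dimension hypothesis is essential here, since it rigidly controls how paths branch off the cycles at $v$ and $w$.
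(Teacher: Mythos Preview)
Your construction for $(\Leftarrow)$ has a fatal gap: the extension you build is actually \emph{split} in $\QGr kQ$. Write $p$ as the cycle at $v$ for $s_0$ steps, followed by a branching arrow $b_0$, followed by the remainder of $p$. In your module $M$, once $k>s_0$, the cycle-at-$v$ element $c_k$ in degree $k$ satisfies $c_k\cdot b=0$ for every arrow $b$ other than the next cycle arrow (the branch has already been passed), so $\sigma(c_d)=c_d$ defines a module map $(\cO_v)_{\ge k}\to M$ splitting the sequence in $\QGr kQ$. Your non-splitness argument fails at the line ``the action of $p$ produces the generator of the $\cO_w$-component'': in fact $c_{k'}\cdot p=0$ in $M$ for $k'>s_0$, since the path $c_{k'}p$ is neither a cycle-at-$v$ path nor of the form $p\cdot(\text{cycle at }w)$. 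A single branching event at one finite degree becomes invisible after truncation. (There is also a smaller issue: as literally stated, the set of ``surviving paths'' does not span a quotient of $e_v kQ$ when $p$ passes through intermediate vertices, since proper prefixes of $p$ past the branch point are killed while $p$ itself survives.)

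The paper repairs this in two moves. First, it passes to the Veronese $kQ^{(lnm)}$, reducing to the case where the cycles at $v$ and $w$ are loops and $p$ is a single arrow $r\colon v\to w$. Second, and crucially, it builds not one extension but a family $N(\nu)$ indexed by $\nu\in k^{\NN}$, where $r$ acts with coefficient $\nu_i$ in degree $i$; the extension is non-split precisely when $\nu$ is \emph{not} eventually zero, so that the branching recurs in arbitrarily high degree. Your $M$ corresponds to a $\nu$ with a single nonzero entry, which the paper explicitly shows gives a split extension. For $(\Rightarrow)$ your outline is in the right spirit, but the paper again uses the Veronese reduction to make the vanishing computation immediate (no paths from $v$ to $w$ at all after reduction) rather than tracking the modular combinatorics you flag as the main obstacle.
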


\begin{Thm} \label{thm.main3Q} 
Let $kQ$ and $kQ'$ be path algebras of finite GK-dimension. Then $\QGr kQ \equiv \QGr kQ'$ if and only if $E_Q=E_{Q'}$.
\end{Thm}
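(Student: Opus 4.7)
The plan is to establish the two implications separately. The \emph{only if} direction follows formally from Theorems \ref{thm.main1Q} and \ref{thm.main2Q}, while the \emph{if} direction requires constructing an explicit equivalence.

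For the \emph{only if} direction, suppose $F \colon \QGr kQ \to \QGr kQ'$ is an equivalence. Any equivalence of abelian categories induces a bijection on isomorphism classes of simple objects and preserves $\Ext^1$. By Theorem \ref{thm.main1Q}, the simple objects of $\QGr kQ$ are exactly the point modules $\cO_v$ indexed by the cyclic vertices of $Q$, and similarly for $Q'$; hence $F$ gives a bijection between the vertex sets of $E_Q$ and $E_{Q'}$. By Theorem \ref{thm.main2Q}, an arrow $v \to w$ in $E_Q$ is detected by the non-vanishing of $\Ext^1_{\QGr kQ}(\cO_v,\cO_w)$, which $F$ carries to non-vanishing of $\Ext^1_{\QGr kQ'}(F\cO_v, F\cO_w)$. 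Since the definition of $E_Q$ permits at most one arrow in each direction between any two vertices, these data assemble into an isomorphism of quivers $E_Q \cong E_{Q'}$.

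For the \emph{if} direction, the plan is to exhibit, for every path algebra $kQ$ of finite GK-dimension, a canonical ``normal form'' quiver $\what{Q}$ depending only on $E_Q$, together with an equivalence $\QGr kQ \equiv \QGr k\what{Q}$. Given such a normal form, the hypothesis $E_Q=E_{Q'}$ forces $\what{Q}=\what{Q'}$, and the theorem follows from the chain
\begin{equation*}
\QGr kQ \;\equiv\; \QGr k\what{Q} \;=\; \QGr k\what{Q'} \;\equiv\; \QGr kQ'.
\end{equation*}
To construct $\what{Q}$ I would first discard every non-cyclic vertex of $Q$: the indecomposable projective at such a vertex is finite-dimensional (only finitely many paths begin there in finite GK-dimension), hence torsion, hence zero in $\QGr kQ$. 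Then I would pass to an appropriate Veronese rescaling by $N$ equal to the least common multiple of the lengths of the simple cycles of $Q$. In this rescaling each cycle of length $n$ becomes a bundle of self-loops at each of its vertices, and arrows between distinct cycles of lengths $n$ and $m$ are replaced by length-$N$ paths. Because $nm \mid N$, the length condition encoded in the definition of $E_Q$ matches the counting of length-$N$ paths exactly, which is what forces $\what{Q}$ to depend only on $E_Q$.

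The principal obstacle is the Veronese step. The classical Verevkin--Artin--Zhang Veronese equivalence is formulated for connected graded algebras under hypotheses that need to be verified, or adapted, in the present non-Noetherian path-algebra setting with multiple vertices. One must also check that the resulting $\what{Q}$ is a genuine invariant of $E_Q$ rather than depending on hidden features of $Q$; this amounts to a careful count of length-$N$ paths between cyclic vertices, and it is precisely at this point that the definition of $E_Q$, as encoding paths of length a positive multiple of $nm$, does the essential work.
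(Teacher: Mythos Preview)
Your treatment of the \emph{only if} direction is correct and matches the paper's argument exactly.

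The \emph{if} direction, however, has two genuine gaps. First, the claim that the projective $e_vkQ$ at a non-cyclic vertex is finite-dimensional is false: a non-cyclic vertex may admit a path into a cycle, after which infinitely many paths begin at $v$. (What is true, and is used in the paper, is that $\pi^*e_vkQ$ decomposes as a sum of shifts of projectives based at cyclic vertices; but this is not the same as vanishing.) Second, and more seriously, the Veronese quiver $Q^{(N)}$ does \emph{not} depend only on $E_Q$. The number of arrows from $\nu_i$ to $\nu_j$ in $Q^{(N)}$ is the number of length-$N$ paths from $\nu_i$ to $\nu_j$ in $Q$, and this count is not determined by the Ext-quiver. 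For instance, take $Q$ and $Q'$ each consisting of two loops joined by, respectively, one arrow and two arrows: then $E_Q=E_{Q'}$ but $Q^{(N)}\neq (Q')^{(N)}$ for every $N$. So your proposed normal form $\what Q$ is not an invariant of $E_Q$, and the argument collapses precisely at the step you flagged as needing care.

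The paper takes a completely different route for this direction. It invokes Smith's equivalence $\QGr kQ\equiv\Mod S(Q)$ with an ultramatricial algebra $S(Q)$, together with Goodearl's theorem that two ultramatricial algebras are Morita equivalent if and only if their Grothendieck groups are isomorphic as pre-ordered abelian groups. The bulk of the work (Theorem~\ref{thm.k0qgrkQ}) is then a computation showing that $(\K0(\QGr kQ),\K0^+(\QGr kQ))\cong(\ZZ^p,\Delta(E_Q))$, where $\Delta(E_Q)$ is a positive cone built purely from the poset $E_Q$. This absorbs exactly the multiplicity data that your Veronese construction cannot control: the pre-order on $\K0$ remembers only which projectives dominate which, not how many paths realize that domination. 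The existence of the normal form $k\Gamma(E_Q)$ you were aiming for is then recovered as a \emph{corollary} of the theorem, not as a means of proving it.
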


The proof of Theorem \ref{thm.main3Q} will be established in different sections. For the direction $\QGr kQ \equiv \QGr kQ' \Rightarrow E_Q=E_{Q'}$, see the remarks directly after Theorem \ref{thm.extofpoints}. For the direction $E_Q=E_{Q'} \Rightarrow \QGr kQ \equiv \QGr kQ'$, see the remarks after Theorem \ref{thm.k0qgrkQ}.

Let $(\cP,\preceq)$ be a finite poset. We can consider $\cP$ as a quiver, which will also be denoted $\cP$, in the following way: the vertices of $\cP$ are just the elements while there is an arrow $x\to y$ if and only if $x\prec y$. Such quivers will be called {\bf poset quivers}. It will be shown that for any quiver $Q$ for which $kQ$ has finite GK-dimension, the Ext-quiver $E_Q$ is a poset quiver. That is to say, we can view (the vertices of) $E_Q$ as a poset by defining $v\prec w$, for cyclic vertices $v$ and $w$, if there is an arrow $v\to w$ in $E_Q$. See section \ref{sec.extposet} for details.

Let $\cP$ be a finite poset. Define $\Gamma(\cP)$ to be the quiver whose vertices are the elements of $\cP$ and where there is an arrow $x\to y$ if and only if $x\preceq y$. $\Gamma(\cP)$ is nothing more than the quiver $\cP$ except now there is a loop placed at each vertex. The path algebra $k\Gamma(\cP)$ has finite GK-dimension and $E_{\Gamma(\cP)}=\cP$. Hence, we get the following interesting corollary to Theorem \ref{thm.main3Q}:

\begin{Cor}
If $A$ is either a monomial algebra or a path algebra of finite GK-dimension, then there is a finite poset $\cP$ and an equivalence
$$
\QGr A \equiv \QGr k\Gamma(\cP).
$$
\end{Cor}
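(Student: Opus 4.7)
The plan is to assemble the corollary directly from pieces already in hand: the reduction from monomial to path algebra established via \cite{HS0,HS1}, the fact that the Ext-quiver of a path algebra of finite GK-dimension is a poset quiver, the identity $E_{\Gamma(\cP)} = \cP$ for any finite poset $\cP$, and Theorem \ref{thm.main3Q}.

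First I would handle the monomial case by reducing it to the path algebra case. If $A$ is a monomial algebra of finite GK-dimension, then by the results of \cite{HS0,HS1} cited in the introduction we have an equivalence $\QGr A \equiv \QGr kQ_A$ where $Q_A$ is the Ufnarovskii graph and $kQ_A$ is a path algebra of finite GK-dimension. So it suffices to prove the corollary under the assumption that $A = kQ$ is a path algebra of finite GK-dimension.

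Now, given such a path algebra $kQ$, I would appeal to the result (to be established in section \ref{sec.extposet}) that $E_Q$ is a poset quiver. Concretely this means the finite set of cyclic vertices of $Q$ carries a partial order $\preceq$ such that there is an arrow $v \to w$ in $E_Q$ precisely when $v \prec w$ in this order. Set $\cP := (E_Q, \preceq)$. By the remarks immediately preceding the corollary, the path algebra $k\Gamma(\cP)$ has finite GK-dimension and satisfies $E_{\Gamma(\cP)} = \cP$. Hence $E_{\Gamma(\cP)} = E_Q$ as quivers, and Theorem \ref{thm.main3Q} gives the desired equivalence
$$
\QGr kQ \equiv \QGr k\Gamma(\cP).
$$

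There is no real obstacle here; the corollary is purely a bookkeeping consequence of the main theorems. The one point that requires care is the reliance on the fact that $E_Q$ really is a poset quiver: without this, the construction $\cP \mapsto \Gamma(\cP)$ would not apply and we would only get $\QGr kQ \equiv \QGr kQ'$ for some path algebra $kQ'$ whose underlying quiver is not of the special form $\Gamma(\cP)$. Thus the genuine content of the corollary is located in section \ref{sec.extposet}, and once that structural result is in place the corollary follows in a single line from Theorem \ref{thm.main3Q}.
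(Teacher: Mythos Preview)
Your proposal is correct and follows the same route as the paper: set $\cP = E_A$ (which is a poset quiver by section \ref{sec.extposet}), note that $E_{\Gamma(\cP)} = \cP = E_A$, and apply Theorem \ref{thm.main3Q}. The paper's proof is the one-line version of exactly what you wrote, with the monomial-to-path-algebra reduction absorbed into the definition $E_A := E_{Q_A}$.
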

\begin{proof}
Just take $\cP$ to be the poset $E_A$ and apply Theorem \ref{thm.main3Q}.
\end{proof}

This corollary says that the path algebras $k\Gamma(\cP)$ for $\cP$ a finite poset form a class of canonical noncommutative homogeneous coordinate rings for the noncommutative projective schemes $\Proj_{nc}A$ where $A$ is a monomial algebra or path algebra of finite GK-dimension.

\subsection{Acknowledgements} The author would like to thank Chris McMurdie, Gautam Sisodia and S. Paul Smith for useful conversations as well as S. Paul Smith for reading an early version of this paper and providing comments.

\section{Notation and Conventions.}

Throughout, $k$ is a fixed field. 

\subsection{Quivers.}
$Q$ will always denote a quiver (i.e., directed graph) with a finite number of vertices and arrows. The set of vertices and arrows will be denoted $Q_0$ and $Q_1$ respectively while $s,t:Q_1 \to Q_0$ will be the source and target maps.

A path in $Q$ is an ordered tuple of vertices and arrows
$$
p=(v_0,a_1,v_1,\ldots,v_{m-1},a_m,v_m)
$$
where $v_{i-1}=s(a_i)$ and $v_i=t(a_i)$. It is more common to use the shorthand notation $p=a_1a_2\cdots a_m$. If $p$ is a path then the source of $p$ is the source of $a_1$ and the target of $p$ is the target of $a_m$. Below is some common terminology used in later sections.

\begin{Def} \label{def.pathterms}
Let $p=(v_0,a_1,v_1,\cdots,a_m,v_m)$ be a path in $Q$.
\begin{enumerate}
\item $p$ is a {\bf chain} if $a_i\neq a_j$ for $i\neq j$.
\item $p$ is a {\bf simple chain} if $v_i\neq v_j$ for $i\neq j$. 
\item $p$ is {\bf closed}, or is a closed path, if $v_0=v_m$.
\item $p$ is a {\bf cycle} if it is a closed chain.
\item $p$ is a {\bf simple cycle} if $v_0=v_m$ but $v_i\neq v_j$ otherwise.
\end{enumerate}
A vertex is called {\bf cyclic} if it is part of some cycle, otherwise it is {\bf acyclic}. A quiver is called {\bf acyclic} if it contains no cycles. An arrow with the same source and target is called a {\bf loop}. 
\end{Def}

\begin{Rmk}
A simple cycle is necessarily a chain. A cycle is simple if and only if no two distinct arrows in the cycle have the same source if and only if no two distinct arrows in the cycle have the same target. Pictorially, a simple cycle is anything of the form
$$
\UseComputerModernTips
\xymatrix{ {} & {v_{m-1}}\ar@/_1pc/[ld] & {} & {} \\
           {v_0}\ar@/_1pc/[rd] & {} & {\vdots}\ar@/_1pc/[ul] & {} \\
           {} & {v_1}\ar@/_1pc/[ur] & {} & {} }
$$
\end{Rmk}

\subsection{Path algebras.}
$kQ$ will denote the path algebra of $Q$. For each vertex $v$, let $e_v=(v)$ denote the trivial path at that vertex. A basis for $kQ$ is given by all paths in $Q$, including trivial paths, and multiplication of two paths is given by concatenation. 

More explicitly, if $p=(v_0,a_1,\ldots,a_m,v_m)$ and $q=(u_0,b_1,\ldots,b_n,u_n)$, then $pq=(v_0,a_1,\ldots,a_m,v_m=u_0,b_1,\ldots,b_n,u_n)$ if $v_m=u_0$ while $pq=0$ if $v_m\neq u_0$.  

The algebra $kQ$ is given the natural grading where each trivial path has degree $0$ and each arrow has degree $1$.

\subsection{The quotient category.}
If $A$ is an $\NN$-graded $k$-algebra, we write $\Gr A$ for the category of $\ZZ$-graded right $A$ modules. Given a graded module $M$, an element $m\in M$ is called {\bf torsion} if $mA_{\geq n}=0$ for some $n$. A module $M$ is called {\bf torsion} if every element of $M$ is torsion and is {\bf torsion free} if no non-zero element is torsion.

The full subcategory of all torsion modules is denoted $\Tors A$. The quotient of $\Gr A$ by $\Tors A$ is labeled $\QGr A$ and we let
$$
\pi^*:\Gr A \to \QGr A
$$ 
denote the canonical quotient functor. $\Tors A$ is a localizing subcategory, that is, $\pi^*$ has a right adjoint which will be denoted by $\pi_*$.

Every graded module $M \in \Gr A$ has a largest submodule contained in $\Tors A$ which is denoted $\tau M$. Moreover, $M/\tau M$ is torsion free and $\pi^*M\cong \pi^* (M/\tau M)$.

Two graded modules $M$ and $N$ are called {\bf tails equivalent} if $M_{\geq n}\cong N_{\geq n}$ for some $n\in \ZZ$. If $M$ and $N$ are tails equivalent, then $\pi^*M\cong \pi^*N$ in $\QGr A$.

In the case where $A$ is right graded coherent, the category $\gr A$ of all finitely presented $\ZZ$-graded right $A$-modules is an abelian subcategory of $\Gr A$. The subcategory $\tors A:=\gr A \cap \Tors A$ is a Serre subcategory of $\gr A$. The inclusion functor $\gr A \to \Gr A$ induces a fully faithful functor
$$
\qgr A:=\gr A/\tors A \to \QGr A.
$$
which is an equivalence between $\qgr A$ and the finitely-presented objects of $\QGr A$.  

\subsection{Point modules.}
\begin{Def}
Let $A$ be an $\NN$-graded $k$-algebra generated by $A_1$ over $A_0$. A graded right module $M=\oplus M_i$ is a {\bf point module} if
\begin{itemize}
\item $M=M_0A$, 
\item $\dim_k M_i=1$ for all $i\geq 0$.
\end{itemize}
\end{Def}

Point modules determine simple objects in $\QGr A$ although not every simple object comes from a point module in general. A countereample is the free algebra on two generators, or more generally, a path algebra of infinite GK-dimension.

\section{Graded modules and graded representations.}
\subsection{}
Given a quiver $Q$, the category $\Gr kQ$ is equivalent to the category $\GrRep kQ$ of graded representations of the quiver $Q$. A graded representation of $Q$, denoted $(M_v,M_a)$, is the assignment of a graded vector space $M_v$ ($k$ is in degree $0$) to each vertex $v$ and for each arrow $a$ a linear map $M_a : M_{s(a)}\to M_{t(a)}$ of degree one. A morphism $\phi:(M_v,M_a) \to (N_v,N_a)$ of graded representations is a collection of graded vector space maps $\phi_v:M_v\to N_v$ such that for each arrow $a$, the diagram
$$
\UseComputerModernTips
\xymatrix{ {M_{s(a)}}\ar[r]^{M_a} \ar[d]_{\phi_{s(a)}} & {M_{t(a)}} \ar[d]^{\phi_{t(a)}} \\
           {N_{s(a)}}\ar[r]_{N_a} & {N_{t(a)}} }
$$
commutes.

The equivalence is determined by sending a graded module $M$ to the data $(Me_v,M_a)$ where $M_a$ is the degree $1$ linear map determined by right multiplication by $a$.

Let $V = \{v_1 \ldots v_n\}\subset Q_0$. If $M$ is a module over $kQ$ and $m\in M$ with $m =\sum_{v_i\in V}me_{v_i}$, then we say $m$ is {\bf supported} on the set $V$ . In particular, if $m = me_v$ for some vertex $v$ then $m$ is supported on $v$. If every element of $M$ is supported on $V$ then we say $M$ is supported on $V$.

\section{Path algebras of finite GK-dimension.}
\subsection{}
In \cite{U1}, V. Ufnarovskii gives a criterion which allows one to determine the growth of a quiver, which is the same as the growth of the path algebra, based on a simple property of the quiver. 

Let $p=(v_0,a_1,\cdots,a_m,v_m)$ be a path in $Q$. Define $Q(p)=(Q(p)_0,Q(p)_1)$ to be the subquiver of $Q$ consisting of all the vertices and arrows that make up $p$, i.e, $Q(p)_0=\{v_0,\ldots,v_m\}$ and $Q(p)_1=\{a_1,\ldots,a_m\}$. Let $C$ be a subquiver of $Q$, call $C$ a {\bf chain}({\bf simple chain, closed, cycle, simple cycle}) if $C=Q(p)$ where $p$ is a chain (simple chain, closed, cycle, simple cycle).

Two cycles overlap if they have a vertex in common. Let $v\in Q_0$ be a vertex. If there are simple cycles $p_1$ and $p_2$ such that $Q(p_1)\neq Q(p_2)$ but $v\in Q(p_1)_0\cap Q(p_2)_0$, then $v$ is called {\bf doubly cyclic}.

Let $Q$ be a quiver with subquivers $C_1$ and $C_2$ which are simple cycles. Define $C_1\preceq C_2$ if there is a simple chain from a vertex of $C_1$ to a vertex of $C_2$ or if $C_1$ and $C_2$ share a common vertex. This makes the set of simple cycles in $Q$, which is denoted by $\mC(Q)$, a finite preorder.  

\begin{lemma}
Let $Q$ be a quiver. The preorder $\mC(Q)$ is a poset if and only if $Q$ has no doubly cyclic vertices.
\end{lemma}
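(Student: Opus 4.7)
The plan is to reduce to showing antisymmetry of $\preceq$ is equivalent to the absence of doubly cyclic vertices, since $\mC(Q)$ is already a preorder by construction.

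The direction ``doubly cyclic vertex $\Rightarrow$ not a poset'' is immediate: if $v$ is doubly cyclic, then by definition $v$ lies on two distinct simple cycles $C_1, C_2$, and these satisfy $C_1 \preceq C_2$ and $C_2 \preceq C_1$ by sharing a vertex, while $C_1 \neq C_2$. So I would dispose of this direction in one line.

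For the converse, assume $Q$ has no doubly cyclic vertex, and suppose $C_1 \preceq C_2 \preceq C_1$; I want $C_1 = C_2$. If $C_1$ and $C_2$ share a vertex, then that vertex is doubly cyclic unless $C_1 = C_2$, forcing equality. So the real content lies in excluding the case where $C_1$ and $C_2$ are vertex-disjoint. In that situation, the definition of $\preceq$ produces simple chains $p_1$ from $v_1 \in C_1$ to $w_1 \in C_2$ and $p_2$ from $v_2 \in C_2$ to $w_2 \in C_1$; I choose them so that $|p_1|+|p_2|$ is minimized. By minimality, $p_1$ meets $C_1 \cup C_2$ only at its endpoints $v_1, w_1$ (a $C_1$-vertex appearing later on $p_1$ would yield a shorter chain, and similarly for $C_2$), and likewise for $p_2$. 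In particular the first arrow $a_1$ of $p_1$, originating at $v_1 \in C_1$, cannot lie in $C_1$, because its target would then be another $C_1$-vertex on $p_1$.

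Now concatenate to build a closed walk $W$ based at $v_1$:
$$
v_1 \xrightarrow{p_1} w_1 \xrightarrow{C_2} v_2 \xrightarrow{p_2} w_2 \xrightarrow{C_1} v_1,
$$
where the middle pieces traverse the simple cycles in their given directions (possibly trivially if $w_1=v_2$ or $w_2=v_1$). Extract a simple cycle $C'$ through $v_1$ from $W$ by the standard shortcut procedure: whenever some vertex $u \neq v_1$ appears twice, in positions $i<j$, splice out the subwalk from $i$ to $j$; repeat until no vertex other than $v_1$ is repeated. This procedure always preserves the first arrow of the walk, so $a_1$ survives in $C'$. Since $a_1 \notin C_1$, we get $C' \neq C_1$, and both are simple cycles through $v_1$, contradicting the no-doubly-cyclic hypothesis.

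The main obstacle is the vertex-disjoint case, and within it, the risk that the closed walk $W$ simplifies all the way back down to $C_1$ itself. The key trick that defuses this is minimality of $p_1$: it forces the very first arrow to leave $C_1$, and the shortcut procedure never deletes that arrow, so $C'$ is guaranteed to be a new simple cycle through $v_1$. Everything else (existence of $p_1, p_2$, the fact that the shortcut yields a simple cycle, the trivial-traversal edge cases) is routine.
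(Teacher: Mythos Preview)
Your proof is correct and follows the same logical route as the paper. The paper's own proof is much terser: for the nontrivial direction it simply asserts that if two distinct simple cycles either share a vertex or admit simple chains in both directions, then ``in both cases, $Q$ has a doubly cyclic vertex,'' without justifying the second case. Your closed-walk-plus-shortcut argument is precisely what is needed to fill that gap, and the minimality trick guaranteeing that the first arrow $a_1$ leaves $C_1$ (hence survives into $C'$ and forces $C' \neq C_1$) is a clean way to handle it.
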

\begin{proof}
If $Q$ has a doubly cyclic vertex then there are distinct simple cycles $C_1$ and $C_2$ having a common vertex. Hence,  $C_1\preceq C_2$ and $C_2\preceq C_1$ showing $\mC(Q)$ is not a poset.

Suppose $\mC(Q)$ is not a poset. Then there are two distinct simple cycles $C_1$ and $C_2$ such that either $C_1$ and $C_2$ share a vertex or there is a simple chain from a vertex of $C_1$ to a vertex of $C_2$ and a simple chain from a vertex of $C_2$ to a vertex of $C_1$. In both cases, $Q$ has a doubly cyclic vertex. 
\end{proof}

The following Theorem is in \cite{U1}, though not stated in the following manner.

\begin{Thm} \label{thm.ufresult}
Let $Q$ be a quiver and $\mC(Q)$ the associated preorder. If $Q$ has a doubly cyclic vertex then $kQ$ has exponential growth. Otherwise, $kQ$ has polynomial growth of degree $d$ where $d$ is the cardinality of a largest totally ordered subset of $\mC(Q)$.
\end{Thm}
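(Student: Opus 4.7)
The plan is to handle the two cases separately, using explicit constructions for the exponential case and a structural decomposition of paths for the polynomial case.

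Suppose first that $Q$ has a doubly cyclic vertex $v$, lying on distinct simple cycles $C_1$ and $C_2$ of lengths $n_1$ and $n_2$. For each word $\sigma=(\sigma_1,\ldots,\sigma_m)\in\{1,2\}^m$, let $p_\sigma$ be the path based at $v$ obtained by concatenating $C_{\sigma_1}C_{\sigma_2}\cdots C_{\sigma_m}$. The first step is to check that distinct words yield distinct paths: because $C_1$ and $C_2$ are simple cycles through $v$, they differ as ordered sequences of arrows within the first $\min(n_1,n_2)$ positions (otherwise the shorter would already close up inside the longer, forcing a repeated vertex and contradicting simplicity), and comparing $p_\sigma$ with $p_\tau$ at the first index $k$ where $\sigma_k\neq\tau_k$ then exhibits a differing arrow in a specific position of the concatenation. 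This yields $2^m$ distinct paths of length at most $m\max(n_1,n_2)$, so $\dim_k(kQ)_{\le N}\ge 2^{\lfloor N/\max(n_1,n_2)\rfloor}$, which is exponential.

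For the polynomial case, assume no vertex of $Q$ is doubly cyclic, and let $d$ be the size of a largest chain in $\mC(Q)$. The main structural claim, which I expect to be the chief obstacle, is this: if a path in $Q$ visits a vertex of a simple cycle $C$ and then leaves $C$ by traversing an arrow not in $C$, it can never revisit any vertex of $C$. To prove this, suppose for contradiction that the path leaves $C$ at $v$ via an arrow $a\notin C$ and eventually returns to $C$ at some vertex $w$. Concatenating this subpath with the arc of $C$ from $w$ back to $v$ produces a closed walk $W$ at $v$ whose first arrow is $a$. Every closed walk decomposes into edge-disjoint simple cycles, so $a$ lies in some simple cycle $C'$ of the decomposition. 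Since $a\notin C$ we have $C'\neq C$, and since $a$ starts at $v$ we have $v\in C'$; this makes $v$ doubly cyclic, contradicting the hypothesis. A companion observation, proved by the same argument applied to a length-one deviation, is that any arrow with both endpoints in $C$ must itself be in $C$.

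Combining these, any path $p$ of length $n$ decomposes uniquely as
$$
p=S_0\cdot L_1\cdot S_1\cdot L_2\cdots L_r\cdot S_r
$$
where $C^{(1)}\prec C^{(2)}\prec\cdots\prec C^{(r)}$ is a strict chain in $\mC(Q)$, each $L_j$ is a walk supported on $C^{(j)}$ that enters at some vertex and exits at some vertex after looping $k_j\ge 0$ full times, and each $S_j$ is a simple chain in $Q$ connecting the relevant cycles (hence of length at most $|Q_0|$); in particular $r\le d$. The number of structural choices (chain of cycles, entry and exit vertices on each cycle, intermediate simple chains) is bounded by a constant depending only on $Q$, and for fixed structural data the number of paths of length exactly $n$ equals the number of tuples $(k_1,\ldots,k_r)\in\NN^r$ with $\sum_j k_j\, n_j=n-c$ for fixed $c$ and $n_j=|C^{(j)}|$; this count is $O(n^{r-1})$. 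Summing over the finitely many structural choices yields $\dim_k(kQ)_n=O(n^{d-1})$ and hence $\dim_k(kQ)_{\le n}=O(n^d)$. For the matching lower bound I would fix a maximal chain $C^{(1)}\prec\cdots\prec C^{(d)}$ together with connecting simple chains; the tuples $(k_1,\ldots,k_d)\in\NN^d$ with $\sum_j k_j n_j\le n$ give distinct paths of length $\le n$, and the standard lattice-point count shows there are $\Theta(n^d)$ of them. Thus $kQ$ has polynomial growth of degree exactly $d$.
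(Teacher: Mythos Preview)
The paper does not give its own proof of this theorem; it is attributed to Ufnarovskii and quoted without argument. Your proposal therefore supplies a proof the paper omits rather than one to compare against.

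Your argument is essentially correct, but two points would benefit from tightening. In the structural claim for the polynomial case, the phrase ``decomposes into edge-disjoint simple cycles'' is imprecise (arrows may repeat in a walk), and you should make explicit why the simple cycle of the decomposition that contains $a$ must pass through $v$: since $a$ is the first arrow of $W$ and $s(a)=v$, every simple-cycle extraction that does not remove $a$ leaves a shorter closed walk still beginning with $a$ at $v$, so the extraction that finally removes $a$ is a simple cycle based at $v$. Alternatively, Lemma~\ref{lem.simpleclosedsubs} of the paper (every closed subquiver of $Q$ is a simple cycle when there are no doubly cyclic vertices) yields this immediately, since the closed subquiver $Q(W)$ is then a simple cycle through $v$ containing the arrow $a\notin C$. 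Second, in the decomposition $p=S_0L_1S_1\cdots L_rS_r$ you should record why the interior vertices of each $S_j$ are acyclic: any cyclic vertex met strictly between $C^{(j)}$ and $C^{(j+1)}$ lies on a simple cycle that would itself have to appear among the $C^{(i)}$, contradicting the definition of $S_j$. This is what forces each $S_j$ to be a simple chain and hence of length at most $|Q_0|$. With these clarifications the upper and lower counts go through as you outline.
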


Hence, the path algebra of a quiver $Q$ has finite GK-dimension if and only if there are no doubly cyclic vertices. In this case, the GK-dimension is $d$ where $d$ is the size of a largest totally ordered subset of $\mC(Q)$.

\begin{example}
The quivers
$$
\UseComputerModernTips
\xymatrix{ {} & {\bullet}\ar@/^/[r] & {\bullet}\ar@/^/[l] & {} & {} & {} & {\bullet}\ar[d] \ar@(ul,dl)[] \ar@/^1pc/[ddd] & {} \\
           {\bullet}\ar[ur] \ar[dr] & {} & {\bullet}\ar@/_/[dl] \ar@<1ex>[u] \ar@<-1ex>[u] & {} & {} & {} & {\bullet}\ar@<1ex>[d] \ar@<-1ex>[d] \ar@(ul,dl)[] & {}  \\
           {} & {\bullet}\ar@/_/[dr] \ar[dl] \ar@<1ex>[dl] \ar@<-1ex>[dl] \ar[uu] & {} & {\bullet} \ar@<-1ex>[uul] & {} & {} & {\bullet}\ar[d] & {} \\
           {\bullet}\ar@(ul,dl)[] & {} & {\bullet}\ar@/_1pc/[uu] \ar[ur] & {} & {} & {} & {\bullet}\ar@(ul,dl)[] & {}}
$$
have finite GK dimension. The first has dimension 2 while the second has dimension 3. The following quivers have infinite GK-dimension.
$$
\UseComputerModernTips
\xymatrix{ {} & {\bullet}\ar@/_1pc/[dl] & {} & {} & {} & {} & {} & {\bullet}\ar[dd] \\
           {\bullet}\ar@/_1pc/[dr] & {} & {\bullet}\ar@/_1pc/[rr] \ar@/_1pc/[ul] & {} & {\bullet,}\ar@/_1pc/[ll] & {\bullet}\ar@<1ex>[urr] \ar[urr] & {} & {} \\
           {} & {\bullet}\ar@/_1pc/[ur] & {} & {} & {} & {} & {} & {\bullet}\ar[ull] }
$$
\end{example}

\begin{example}
As the path algebras of the first two quivers in the previous example have finite GK-dimension, their Ext-quivers are defined. The Ext-quivers are given below in the same ordering as the quivers above:
$$
\UseComputerModernTips
\xymatrix{ {\bullet} \ar[rr] \ar[drr] \ar[ddrr] & {} & {\bullet} & {} & {\bullet} \ar[d] \\
           {\bullet} \ar[urr] \ar[rr] \ar[drr] & {} & {\bullet} & {} & {\bullet} \ar[d] \\
					 {\bullet} \ar[uurr] \ar[urr] \ar[rr] & {} & {\bullet} & {} & {\bullet} }
$$
\end{example}
Below are a few lemmas about quivers of finite growth that will be used implicitly throughout.

\begin{lemma}
Suppose $p$ is a closed path such that
\begin{enumerate}
\item $p$ is not a simple cycle,
\item $p\neq q^n$ for any closed path $q$ and $n\geq 2$.
\end{enumerate}
Then there are two distinct  arrows in $p$  with the same source.
\end{lemma}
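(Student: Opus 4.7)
I would argue by contrapositive: assuming that no two distinct arrows of $p$ share a source, I would show that $p$ must be either a simple cycle or a proper power $q^n$ with $n\geq 2$. Write $p=a_1\cdots a_m$ with vertices $v_0,v_1,\ldots,v_m$ and $v_0=v_m$. The hypothesis says that for each vertex $v$ appearing as some $v_{i-1}$, all arrows $a_j$ in $p$ with source $v$ coincide as arrows of $Q$. In particular, from each such vertex there is a unique ``continuation arrow'' in $p$, so the walk traced out by $p$ is deterministic once its starting vertex is fixed.

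Let $n$ be the least positive integer with $v_n=v_0$; such an $n$ exists since $v_m=v_0$. Because the continuation arrow at $v_0$ is unique, whenever $n<m$ we must have $a_{n+1}=a_1$, hence $v_{n+1}=v_1$, and by induction $a_{n+i}=a_i$ and $v_{n+i}=v_i$ whenever $n+i\leq m$. Writing $m=kn+r$ with $0\leq r<n$, iterated periodicity gives $v_r=v_m=v_0$; the minimality of $n$ forces $r=0$, so $n\mid m$. If $m=kn$ with $k\geq 2$, set $q=a_1\cdots a_n$; since $s(q)=v_0=v_n=t(q)$, $q$ is a closed path, and the periodicity above gives $p=q^k$, contradicting (2). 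If instead $n=m$, I would verify that $v_0,v_1,\ldots,v_{n-1}$ are pairwise distinct, because any coincidence $v_i=v_j$ with $0<i<j<n$ would, by the same determinism argument applied at $v_i=v_j$, propagate to $v_{i+(n-j)}=v_n=v_0$ with $0<i+n-j<n$, violating minimality of $n$; hence $p$ is a simple cycle, contradicting (1).

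The content of the argument is essentially that a closed deterministic walk on $Q$ is a simple cycle repeated some positive number of times. The only delicate point is separating the two conclusions (simple cycle versus proper power) from the single minimality condition on $n$, and making sure the $n=m$ case really rules out intermediate repeats of vertices. Both of these reduce to the same inductive propagation of the periodicity relation $a_{i+n}=a_i$, so once that lemma is established the dichotomy falls out immediately.
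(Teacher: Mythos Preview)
Your proof is correct and follows essentially the same route as the paper: argue by contrapositive, use the ``unique continuation arrow'' hypothesis to obtain a periodicity $a_{n+i}=a_i$, and conclude that $p$ is a power of its initial segment. The only organizational difference is that the paper chooses the \emph{minimal gap} $j-i$ between any two equal vertices and then cyclically rotates $p$ so that this gap starts at position $0$; with that choice the initial segment $q$ is automatically a simple cycle, so the paper does not need your separate argument for the $n=m$ case. Your version avoids the rotation but compensates with the explicit propagation argument showing $v_0,\ldots,v_{n-1}$ are pairwise distinct. Both arrive at the same conclusion by the same mechanism.
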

\begin{proof}
Write $p=(v_0,a_1,\ldots,a_m,v_m)$. Suppose distinct arrows in $p$ have distinct sources but $p$ is not a simple cycle. As $p$ is not simple there are natural numbers $i,j$ with $0\leq i<j\leq m$ and $(i,j)\neq (0,m)$ such that $v_i=v_j$. Choose such a pair $(i,j)$ such that $j-i$ is minimal. As $p$ is closed we may assume $i=0$. We can write $p=qr$ where $q=(v_0,a_1,\ldots,a_j,v_j)$ and $r=(v_j,a_{j+1},\ldots,a_m,v_m)$. By the choices made, $q$ is a simple cycle. As $p$ is not simple we know $r$ is not a trivial path. 

Since $s(a_{j+1})=v_j=v_0=s(a_1)$ and distinct arrows have distinct sources it follows that $a_{j+1}=a_1$. Therefore, $v_1=t(a_1)=t(a_{j+1})=v_{j+1}$. By similar reasoning as for $a_1$ and $a_{j+1}$ we deduce $a_2=a_{j+2}$ and by induction $a_{j+l}=a_l$ for all $1\leq l\leq j$. Therefore, $p=q^2r'$ where $r'=(v_{2j},a_{2j+1},\ldots,a_m,v_m)$. By induction we can continue this to write $p=q^n$ where $n=m/j>1$.
\end{proof}

\begin{lemma} \label{lem.simpleclosedsubs}
A quiver $Q$ has no doubly cyclic vertices if and only if every closed subquiver of $Q$ is a simple cycle.
\end{lemma}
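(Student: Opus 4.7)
The plan is to prove the two implications separately, with the backward direction serving as a short contrapositive and the forward direction handled by induction on the length of a closed path, leveraging the preceding lemma.

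For ($\Leftarrow$), I argue the contrapositive. Suppose $v$ is doubly cyclic, so that there are distinct simple cycle subquivers $C_1\neq C_2$ each containing $v$. Pick representing simple cycle paths $p_1, p_2$ based at $v$ with $Q(p_i)=C_i$. Then the concatenation $p=p_1p_2$ is a closed path and $Q(p)=C_1\cup C_2$. If $Q(p)$ were itself a simple cycle, call it $C$, then $C_1$ and $C_2$ would be closed subquivers of $C$; but a simple cycle admits no proper closed subquiver (deleting any arrow leaves some vertex with out-degree zero in what remains, so the result cannot be $Q(q)$ for any closed path $q$). This forces $C_1=C=C_2$, contradicting $C_1\neq C_2$.

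For ($\Rightarrow$), I induct on the length of a closed path $p$, showing $Q(p)$ is a simple cycle under the assumption that $Q$ has no doubly cyclic vertex. The base case, $p$ a loop, is immediate. For the inductive step, there are three cases. If $p$ is itself a simple cycle, done. If $p=q^n$ for a closed path $q$ and $n\geq 2$, then $q$ is strictly shorter and $Q(p)=Q(q)$, so the induction hypothesis applied to $q$ finishes. Otherwise, by the preceding lemma, $p$ has two distinct arrows $a_1,a_2$ with a common source $v$, and I will show this already contradicts the no-doubly-cyclic hypothesis.

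The key observation is that $Q(p)$ is strongly connected, since any vertex of $p$ reaches any other by traversing $p$ cyclically. For each $i=1,2$, pick a path in $Q(p)$ from the target of $a_i$ back to $v$; prepending $a_i$ gives a cycle $\gamma_i$ in $Q(p)$ based at $v$ whose first arrow is $a_i$. By the standard pruning procedure—iteratively splicing out an intermediate repeated vertex, never disturbing the initial arrow—I extract from each $\gamma_i$ a simple cycle $\sigma_i$ through $v$ whose first out-arrow at $v$ is still $a_i$. Since $a_1\neq a_2$, the simple cycle subquivers $Q(\sigma_1)$ and $Q(\sigma_2)$ are distinct, so $v$ is doubly cyclic, contradicting the hypothesis and completing the induction.

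The main obstacle is the last step: one must carefully ensure that the pruning preserves the designated initial arrow, so that the two extracted simple cycles are genuinely different rather than the same cycle produced twice. Everything else is essentially formal once the preceding lemma is invoked.
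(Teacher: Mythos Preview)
Your proof is correct and follows essentially the same approach as the paper's. Both directions match: the backward direction is the same contrapositive via the concatenation $p_1p_2$ (you supply the extra justification that a simple cycle has no proper closed subquiver, which the paper leaves implicit), and the forward direction in both cases invokes the preceding lemma to locate two distinct arrows with a common source and then prunes to two simple cycles through that vertex with different first arrows. The only cosmetic differences are that the paper handles the power case by a ``WLOG $p\neq q^n$'' reduction rather than your induction, and the paper constructs the two pre-pruning cycles by splitting $p=qr$ at the common source rather than via strong connectivity of $Q(p)$; the pruning step and the care about preserving the initial arrow are identical in spirit.
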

\begin{proof}
$(\Leftarrow)$ Suppose $v\in Q_0$ is a doubly cyclic vertex. Then there are distinct simple cycles $p_1=(v_0,a_1,\ldots,a_m,v_m)$ and $p_2=(u_0,b_1,\ldots,b_n,u_n)$ with $Q(p_1)\neq Q(p_2)$ but for which $v_i=u_j$. As $p_1$ and $p_2$ are cycles, we may assume $i=j=0$. Consider the closed path 
$$
p_1p_2=(v_0,a_1,\ldots,a_m,v_m=v_0=u_0,b_1,\ldots,b_n,u_n)
$$
and let $C=Q(p_1p_2)$. Then $C$ is a closed subquiver which is not a simple cycle.

$(\Rightarrow)$ Suppose $C$ is a closed subquiver which is not a simple cycle. Write $C=Q(p)$ where $p=(v_0,a_1,\ldots,a_m,v_m)$ is a closed path. Since $Q(p^n)=Q(p)$ we can assume $p\neq q^n$ where $q$ is closed and $n\geq 2$. Since $p$ is a closed path which is not simple and is not a power, the previous lemma says there are two distinct arrows in $p$ with the same source $u$. We may assume $u=v_0$ and write $p=qr$ where $q=(v_0,a_1,\ldots,a_l,v_l=v_0)$ and $r=(v_l,a_{l+1},\ldots,a_m,v_m)$ with $a_{l+1}\neq a_1$.

If $q$ is not a simple cycle, then we can find vertices $v_i$ and $v_j$ in $q$ such that $0 \leq i < j \leq l$ and $(i,j) \neq (0,l)$ with $v_i = v_j$. If $i=0$, then instead of looking at the pair $(0,j)$ look at the pair $(j,l)$. Consider the path $q_1=(v_0,a_1,\ldots,a_i,v_i,a_{j+1},\ldots,v_l)$ which is obtained by removing the subpath from $v_i$ to $v_j$ in $q$. Notice $q_1$ is a closed path of strictly smaller length than $q$ and still contains the arrow $a_1$. As the length of the path decreases, we can continue this process only finitely many times.

The only way the process can stop is if we eventually obtain a closed path $q'=q_n$ which is a simple cycle. This simple cycle starts at the vertex $v_0$ and contains the arrow $a_1$. Similarly, we can do the same process to $r$ to obtain a simple cycle $r'$ which starts at the vertex $v_l=v_0$ and contains the arrow $a_{l+1}\neq a_1$. Hence, $Q(q')$ and $Q(r')$ are distinct simple cycles which contain $v_0$ showing $v_0$ is a doubly cyclic vertex. 
\end{proof}

\section{Cyclic point modules and the simple objects of $\QGr kQ$.} \label{sec.cyclicpoints}

Suppose $Q$ has no doubly cyclic vertices and let $v$ be a cyclic vertex. There is a special point module associated to the cyclic vertex $v$, which is a quotient of the module $e_vkQ$, and will be denoted by $\cO_v$. These special point modules will be called {\it cyclic point modules}.

In words, $\cO_v$ is the quotient of $e_vkQ$ by the right sub-module spanned by all paths beginning at $v$ but which end at a vertex $u$ not in the cycle $p$.

Here is a precise description of $\cO_v$. 

Let $p=(v=v_0,a_1,\ldots,a_n,v_n=v)$ be the simple cycle which starts at $v$. As $kQ$ has finite GK-dimension, $a_{i+1}$ is the only arrow from $v_i \to v_{i+1}$. Only considering arrows which start at a vertex in $p$, the quiver locally looks like 
\begin{equation} \label{pict.simplecycle}
\UseComputerModernTips
\xymatrix{ {} & {} & {\cdots}\ar@/_1pc/[dl]_{a_{n-1}} & {} & {} \\
           {} & {v_{n-1}} \ar@/_1pc/[dr]_{a_n} \ar@<2ex>_{\vdots}[l] \ar@<-2ex>[l] & {} & {v_1} \ar@<2ex>[r] \ar@<-2ex>[r]^{\vdots} \ar@/_1pc/[ul]_{a_2} & {} & {} \\
           {} & {} & {v}\ar@/_1pc/[ur]_{a_1} \ar@<2ex>[d]_{\hdots} \ar@<-2ex>[d]^{\hdots} & {} & {} & {} \\
           {} & {} & {} & {} & {} & {} }
\end{equation} 

Every path in $Q$ which has source $v$ has one of the forms:
\begin{enumerate}
\item[$\bullet$] $p^ma_1\cdots a_i$ such that $m\in \NN$ and $0\leq i<n$ ($i=0$ means just $p^m$),
\item[$\bullet$] $p^ma_1\cdots a_ibq$ where $m\in \NN$, $0\leq i<n$, $b\neq a_{i+1}$ is an arrow starting at $v_i$ (if any) and $q$ is any path which begins at $t(b)$.  
\end{enumerate}

Consider all the submodules $p^ma_1\cdots a_ibkQ$ of $e_v kQ$ where $b\neq a_{i+1}$ is an arrow which starts at $v_i$. Since the paths in $Q$ form a basis for $kQ$ we get
$$
\sum{p^ma_1\cdots a_ibkQ}=\bigoplus p^ma_1\cdots a_ibkQ.
$$
where the sums are over all $(m,i,b)$ with $m\geq 0$, $0\leq i<n$, and $b$ an arrow not in $p$ starting at $v_i$.

$\cO_v$ is the quotient of $e_vkQ$ by the submodule $\bigoplus p^ma_1\cdots a_ibkQ$. By definition of $\cO_v$, the following sequence is exact:
\begin{equation} \label{eq.cyclicpoint}
\UseComputerModernTips
\xymatrix{ {0}\ar[r] & {\bigoplus p^ma_1\cdots a_ibkQ}\ar[r]^(0.7){\i} & {e_vkQ}\ar[r] & {\cO_v}\ar[r] & {0} }
\end{equation}
The map $\i$ is simply the inclusion map. 

Since the modules $e_vkQ$ and $p^ma_1\cdots a_ibkQ$ are projective in $\Gr kQ$, the exact sequence \ref{eq.cyclicpoint} is a projective resolution of $\cO_v$ in $\Gr kQ$. 

In \cite{Sm1}, Smith proves $\pi^*kQ$ is a projective object in $\QGr kQ$. Hence, as $\pi^*$ preserves coproducts, $\pi^*e_vkQ$ is a projective object for any vertix $v$ in $Q$. Since the quotient functor $\pi^*:\Gr kQ \to \QGr kQ$ is exact, the sequence \ref{eq.cyclicpoint} determines a projective resolution of $\cO_v$ in $\QGr kQ$. 

As a vector space, $\cO_v$ has a basis consisting of all paths of the form $p^ma_1\cdots a_i$. If $b$ is any arrow in $Q$, then in $\cO_v$, 
$$
p^ma_1\cdots a_i.b=\left\{
\begin{array}{rl}
p^ma_1\cdots a_ia_{i+1} & \text{ if }b=a_{i+1} \\
0 & \text{ otherwise.}
\end{array}
\right.
$$
If $j\in \NN$, then we can uniquely write $j=mn+i$ for some $i<n$ and we get $(\cO_v)_j=kp^ma_1\cdots a_i$. In particular, $(\cO_v)_0=ke_v$. 

\begin{proposition} 
Let $kQ$ be a path algebra of finite GK-dimension. For each cyclic vertex $v$, $\cO_v$ is a point module.
\end{proposition}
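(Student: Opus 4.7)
The plan is simply to verify directly from the explicit description of $\cO_v$ given just before the statement that the two defining conditions of a point module hold. Recall that $kQ$ is $\NN$-graded and generated by $kQ_1$ (the arrows) over $kQ_0$ (the trivial paths), so the definition of point module applies.

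First I would invoke the description of $\cO_v$ as a $k$-vector space, namely that it has a basis consisting of the paths $p^m a_1 \cdots a_i$ with $m \ge 0$ and $0 \le i < n$, where $p = a_1 \cdots a_n$ is the unique simple cycle at $v$ (uniqueness follows because $Q$ has no doubly cyclic vertex, which is the finite GK-dimension hypothesis via Theorem \ref{thm.ufresult}). Such a basis element has degree $mn + i$, and the pair $(m,i)$ with $m \ge 0$ and $0 \le i < n$ is uniquely determined by $j = mn + i \in \NN$. Hence $(\cO_v)_j = k \cdot p^m a_1 \cdots a_i$ is one-dimensional for every $j \ge 0$ (and zero in negative degrees), which gives the second point-module condition.

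For cyclicity, I would note that $(\cO_v)_0 = k e_v$ and that the basis element $p^m a_1 \cdots a_i$ is precisely $e_v \cdot (p^m a_1 \cdots a_i)$, where $p^m a_1 \cdots a_i$ is a degree $mn+i$ element of $kQ$. Thus every basis element of $\cO_v$ lies in $e_v \cdot kQ = (\cO_v)_0 \cdot kQ$, so $\cO_v = (\cO_v)_0 \cdot kQ$, establishing the first condition.

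There is no real obstacle here: all the content has already been extracted in the discussion leading up to the proposition, specifically in the identification of a basis of $\cO_v$ via the defining short exact sequence \ref{eq.cyclicpoint}. The proof is thus essentially a one-line verification that the stated basis has exactly one element in each non-negative degree and that all these elements are generated by $e_v$.
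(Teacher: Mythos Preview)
Your proposal is correct and follows exactly the same approach as the paper: both simply observe from the explicit basis $\{p^m a_1\cdots a_i\}$ that each graded piece is one-dimensional, and that $e_v$ generates everything since $p^m a_1\cdots a_i = e_v \cdot p^m a_1\cdots a_i$.
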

\begin{proof}
It was already noted that $\dim (\cO_v)_j=1$ for all $j$. Also, since $(\cO_v)_0=ke_v$ and $p^ma_1\cdots a_i=e_vp^ma_1\cdots a_i$, we get $\cO_v=(\cO_v)_0kQ$. 
\end{proof}

\begin{lemma} \label{lem.cyclicpointsarediff} 
Let $v$ and $w$ be cyclic vertices. Then $\Hom_{\QGr kQ}(\cO_v,\cO_{w})\neq 0$ if and only if $v=w$. In particular, $\pi^*\cO_v \not\cong \pi^*\cO_w$ if $v\neq w$.
\end{lemma}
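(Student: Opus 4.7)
The plan is to reduce $\Hom_{\QGr kQ}(\cO_v,\cO_w)$ to an ordinary graded Hom out of a tail of $\cO_v$, and then to use the explicit basis of the cyclic point modules together with the no-doubly-cyclic-vertex condition (which holds by Theorem \ref{thm.ufresult}) to pin down when such a map can be nonzero.

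First I would observe that $\cO_v$ is torsion-free: the basis element $p^m a_1\cdots a_i$ (supported on the vertex $v_i$) is sent by right multiplication by the next arrow in the cycle $p$ to the unique basis element in the next degree, so every basis element generates nonzero elements in arbitrarily high degrees. The same observation shows that the only nonzero graded submodules of $\cO_v$ are the tails $(\cO_v)_{\geq N}$. Since $\cO_w$ is torsion-free for the same reason, the standard formula for morphisms in a Serre quotient yields
\[
\Hom_{\QGr kQ}(\pi^*\cO_v,\pi^*\cO_w)=\colim_{N}\Hom_{\Gr kQ}\bigl((\cO_v)_{\geq N},\cO_w\bigr).
\]
Hence it suffices to show that there is no nonzero graded morphism $\phi\colon(\cO_v)_{\geq N}\to\cO_w$ whenever $v\neq w$.

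Next, the tail $(\cO_v)_{\geq N}$ is cyclic: it is generated by its unique (up to scalar) degree-$N$ basis element $x=p^M a_1\cdots a_j$, where $N=Mn+j$ and $n$ is the length of the simple cycle $p$ through $v$. The element $x$ is supported on the vertex $v_j$, so if $\phi$ is nonzero then $\phi(x)$ must be a nonzero element of $\cO_w$ in degree $N$ supported on $v_j$. In particular $v_j$ must lie on the simple cycle $q=(w=w_0,b_1,\ldots,b_m,w_m=w)$ through $w$. Hence the simple cycles through $v$ and through $w$ share the vertex $v_j$, and by Lemma \ref{lem.simpleclosedsubs} we must have $Q(p)=Q(q)$ as subquivers. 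Thus $q$ is a cyclic rotation of $p$, say $w=v_s$ for some $0\le s<n$.

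To finish, I would use that the unique degree-$N$ basis element of $\cO_w$ sits at the vertex $v_{(s+j)\bmod n}$; matching this with $v_j$ forces $s\equiv 0\pmod n$, i.e.\ $s=0$ and $w=v$. The converse is immediate, since when $v=w$ the identity gives a nonzero element of $\Hom_{\QGr kQ}(\cO_v,\cO_w)$; this also yields the ``in particular'' statement, since any isomorphism is in particular a nonzero morphism. The only slightly delicate point in this plan is the cyclic bookkeeping at the very end: once the two simple cycles have been identified, one must track degrees and vertex supports modulo $n$ to see that the rotation $s=0$ is the only one compatible with a degree-zero map. The reduction to graded Hom and the application of Lemma \ref{lem.simpleclosedsubs} are formal.
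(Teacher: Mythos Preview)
Your argument is correct and follows the same route as the paper: reduce to a colimit of graded Homs $(\cO_v)_{\geq N}\to\cO_w$ and then compare the vertex supports of $(\cO_v)_N$ and $(\cO_w)_N$. The paper's proof simply asserts in one line that these supports differ for all $N$ when $v\neq w$, while you supply the details---in particular the modular bookkeeping for the case where $v$ and $w$ lie on the same simple cycle.
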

\begin{proof}
If $v$ and $w$ are distinct cyclic vertices, then $(\cO_v)_n$ and $(\cO_w)_n$ are supported at different vertices for all $n$. Hence, the only graded morphism from $(\cO_v)_{\geq n}\to \cO_{w}$ for all $n$ is zero. Hence,
$$
\Hom_{\QGr kQ}(\cO_v,\cO_{w})=\varinjlim_{n}\Hom_{\Gr kQ}((\cO_v)_{\geq n},\cO_{w})=0.
$$  
\end{proof}

\begin{proposition} \label{prop.shiftofpoint}
Let $v$ be a cyclic vertex and write
$$
p=(v_0=v,a_1,v_1,\ldots,a_n,v_n=v)
$$ 
for the simple cycle which contains $v$. Then in $\Gr kQ$, $\cO_v(1)_{\geq 0}\cong \cO_{v_1}$. 
\end{proposition}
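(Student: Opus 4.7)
The plan is to construct an explicit isomorphism $\phi : \cO_{v_1} \to \cO_v(1)_{\ge 0}$ using the fact that $\cO_{v_1}$ is cyclic, so $\phi$ is determined by specifying the image of its generator $e_{v_1}$. The natural candidate is the element $a_1 \in (\cO_v)_1$, which lies in $(\cO_v(1))_0 = (\cO_v(1)_{\ge 0})_0$, is supported at $v_1$ (since $a_1 e_{v_1} = a_1$), and has the right degree.

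First I would record the explicit description of $\cO_{v_1}$. The simple cycle through $v_1$ is
$$p' = (v_1,a_2,v_2,\ldots,a_n,v_0,a_1,v_1),$$
so (using the explicit resolution (\ref{eq.cyclicpoint}) applied to $v_1$) the module $\cO_{v_1}$ is the quotient of $e_{v_1}kQ$ by the submodule spanned by paths of the form $(p')^m a_2\cdots a_{j+1} c\, kQ$ where $0\le j< n$ and $c$ is any arrow starting at $v_{j+1}$ with $c\ne a_{j+2}$ (indices taken mod $n$).

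Next, I would verify that the assignment $e_{v_1}\mapsto a_1$ respects these relations. The key computation is that for any $m\ge 0$ and $0\le j<n$,
$$a_1\cdot (p')^m a_2\cdots a_{j+1} \;=\; p^m a_1 a_2\cdots a_{j+1}$$
inside $kQ$, obtained by simply regrouping the arrows (e.g.\ $a_1(a_2\cdots a_n a_1)^m = (a_1 a_2\cdots a_n)^m a_1 = p^m a_1$). Right-multiplying by a forbidden arrow $c\ne a_{j+2}$ lands the product in the submodule $p^m a_1\cdots a_{j+1} c\, kQ$, which is precisely one of the summands killed in the defining resolution of $\cO_v$. Hence all the relations of $\cO_{v_1}$ are sent to $0$ in $\cO_v$, and $\phi$ is a well-defined graded $kQ$-module map into $\cO_v(1)_{\ge 0}$ (degrees match because $p^m a_1 a_2\cdots a_{j+1}$ has degree $mn+j+1$ in $\cO_v$, i.e.\ degree $mn+j$ in $\cO_v(1)$, which agrees with the degree of $(p')^m a_2\cdots a_{j+1}$ in $\cO_{v_1}$).

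Finally, to see $\phi$ is an isomorphism, I would compare dimensions degree by degree. Both $\cO_{v_1}$ and $\cO_v(1)_{\ge 0}$ are one-dimensional in every non-negative degree; the computation above shows $\phi$ carries the distinguished basis vector $(p')^m a_2\cdots a_{j+1}$ of $\cO_{v_1}$ onto the nonzero basis vector $p^m a_1 a_2\cdots a_{j+1}$ of $\cO_v(1)_{\ge 0}$ in every degree, so $\phi$ is a graded bijection on underlying vector spaces, hence an isomorphism of graded $kQ$-modules. The only subtlety — and the one step that actually requires care — is verifying that the shifted regrouping $a_1(p')^m = p^m a_1$ really corresponds, after descending to the quotient $\cO_v$, to the relations used to define $\cO_{v_1}$; everything else is bookkeeping.
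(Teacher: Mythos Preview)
Your proposal is correct and follows essentially the same approach as the paper: both construct the map $\phi:\cO_{v_1}\to\cO_v(1)_{\ge 0}$ determined by $e_{v_1}\mapsto a_1$, use the regrouping identity $a_1(p')^m=p^m a_1$ to verify it is a well-defined module map sending basis elements $(p')^m a_2\cdots a_i$ to $p^m a_1\cdots a_i$, and then conclude it is an isomorphism (the paper checks injectivity and computes the image, you compare dimensions degree by degree --- these are interchangeable). If anything, your treatment of well-definedness is more explicit than the paper's ``it is easy to check''.
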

\begin{proof}
The simple cycle which contains $v_1$ is $p'=(v_1,a_2,\ldots,a_n,v_0,a_1,v_1)$. Notice $a_1(p')^n=p^na_1$ for all $n$. Define a linear map $\phi:\cO_{v_1} \to \cO_v$ by sending $(p')^na_2\cdots a_i$ to $a_1(p')^na_2\cdots a_i=p^na_1\cdots a_i$. It is easy to check this defines an injective graded module map $\phi:\cO_{v_1} \to \cO_v(1)$. Since $e_{v_1}\mapsto a_1$, the image of $\phi$ is $\cO_v(1)_{\geq 0}$. Hence, $\cO_{v_1} \cong \cO_{v}(1)_{\geq 0}$. 
\end{proof}

\begin{Cor} \label{cor.shiftofpoint}
Let $v$ be a cyclic vertex. If $z\in \ZZ$, then in $\QGr kQ$, $\pi^*\cO_v(z)\cong \pi^*\cO_{v'}$ for some cyclic vertex $v'$ in $p$.
\end{Cor}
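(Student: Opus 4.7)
The plan is to bootstrap from Proposition \ref{prop.shiftofpoint} using the basic fact that tails-equivalent modules become isomorphic after applying $\pi^*$, together with the cyclic nature of the vertices in $p$.

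First I would note that for any graded module $M$ and any $k\in\ZZ$, the inclusion $M(k)_{\geq 0}\hookrightarrow M(k)$ has torsion cokernel (it is a finite-dimensional module supported in degrees $<0$), so $\pi^* M(k)_{\geq 0}\cong \pi^* M(k)$ in $\QGr kQ$. Combined with Proposition \ref{prop.shiftofpoint}, this gives
$$
\pi^*\cO_v(1)\cong \pi^*(\cO_v(1)_{\geq 0}) \cong \pi^*\cO_{v_1}.
$$

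Next I would iterate this. Let $p=(v_0,a_1,\ldots,a_n,v_n=v_0)$ be the simple cycle through $v=v_0$. Each $v_i$ is a cyclic vertex sitting in a simple cycle $p^{(i)}=(v_i,a_{i+1},\ldots,a_n,a_1,\ldots,a_i,v_i)$, and applying Proposition \ref{prop.shiftofpoint} to $v_i$ yields $\pi^*\cO_{v_i}(1)\cong \pi^*\cO_{v_{i+1}}$ (indices mod $n$). By induction on $k\geq 0$,
$$
\pi^*\cO_v(k)\cong \pi^*\cO_{v_{k\bmod n}}.
$$
In particular $\pi^*\cO_v(n)\cong \pi^*\cO_{v_0}=\pi^*\cO_v$, so $\pi^*\cO_v$ is $n$-periodic under the shift functor on $\QGr kQ$.

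Finally, for arbitrary $z\in\ZZ$, choose $m\in\ZZ$ with $z+mn\geq 0$ and write $z+mn=qn+r$ with $0\le r<n$. Applying the $n$-periodicity $q+m$ times and then the previous displayed isomorphism with $k=r$ gives
$$
\pi^*\cO_v(z)\cong \pi^*\cO_v(z+mn)\cong \pi^*\cO_v(r)\cong \pi^*\cO_{v_r},
$$
so we may take $v'=v_r$, which lies on $p$. The only real obstacle is the bookkeeping that shifting commutes with $\pi^*$ and that truncation is harmless in $\QGr kQ$; once that is in hand, Proposition \ref{prop.shiftofpoint} does all the work.
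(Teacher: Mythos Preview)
Your proof is correct and follows essentially the same approach as the paper: both use Proposition \ref{prop.shiftofpoint} to handle a single shift and then induct. The only cosmetic difference is that the paper treats $z=-1$ directly (via $\cO_{v_{n-1}}(1)_{\ge 0}\cong \cO_v$) before inducting, whereas you first establish $n$-periodicity and then reduce an arbitrary $z$ to a nonnegative residue; these are equivalent maneuvers.
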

\begin{proof}
Write $p=(v_0=v,a_1,v_1,\ldots,v_n=v)$ for the simple cycle which begins at $v$. By Proposition \ref{prop.shiftofpoint} $\cO_v(1)_{\geq 0}\cong \cO_{v_1}$ which implies
$$
\pi^*\cO_v(1)\cong \pi^*\cO_v(1)_{\geq 0} \cong \pi^*\cO_{v_1}.
$$

On the other hand, $\cO_{v_{n-1}}(1)_{\geq 0} \cong \cO_v$ so we get
$$
\pi^*\cO_{v_{n-1}}(1)\cong \pi^*\cO_{v_{n-1}}(1)_{\geq 0} \cong \pi^*\cO_v
$$
which shows 
$$
\pi^*\cO_v(-1)\cong \pi^*\cO_{v_{n-1}}.
$$
Hence, the corollary is finished by induction.
\end{proof}

\begin{proposition} \label{prop.everypointiscyclic}
Let $P$ be a point module over a path algebra of finite GK-dimension. There is a cyclic vertex $v$ such that $\pi^*P \cong \pi^*\cO_v$.
\end{proposition}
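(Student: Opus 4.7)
My plan is to locate, inside $P$, a tail isomorphic up to a shift to some $\cO_v$, then transport the conclusion through $\pi^*$ via Corollary \ref{cor.shiftofpoint}.

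First I would decode the combinatorial structure of $P$: since $\dim_k P_0 = 1$ and $kQ_0 = \bigoplus_{u} ke_u$ is a sum along orthogonal idempotents, $P_0 = km_0$ is supported at a single vertex; by cyclicity of $P$, each $P_i = km_i$ is also supported at a single vertex, call it $x_i$. Since $P$ is generated in degree $0$, for every $i$ there is at least one arrow from $x_i$ to $x_{i+1}$ on which $m_i$ acts nontrivially.

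By finiteness of $Q_0$, some vertex $v$ occurs as $x_i$ for infinitely many $i$; let $N$ be the smallest such index. For any two indices $N \leq i < j$ with $x_i = x_j = v$, the sequence $x_i,x_{i+1},\ldots,x_j$ together with the chosen connecting arrows forms a closed path at $v$, whose subquiver, by Lemma \ref{lem.simpleclosedsubs} (applicable because Theorem \ref{thm.ufresult} says $Q$ has no doubly cyclic vertices), is a simple cycle. The no-doubly-cyclic-vertex condition also forces uniqueness of the simple cycle through $v$; call it $C$, with vertex sequence $w_0 = v, w_1, \ldots, w_{n-1}$ and arrows $\alpha_k : w_{k-1} \to w_k$. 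Stringing together successive occurrences of $v$ shows $x_i \in C$ for all $i \geq N$.

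The heart of the proof is to identify $P_{\geq N}(N)$ with $\cO_v$ in $\Gr kQ$. Define $\phi : e_v kQ \to P(N)$ by $\phi(e_v) = m_N$; its image is the submodule of $P(N)$ generated in degree $0$, which is $P_{\geq N}(N)$. The main obstacle is to show that $\phi$ kills the submodule presented in \eqref{eq.cyclicpoint}, equivalently that $m_i \cdot b = 0$ whenever $x_i = w_k$ (for some $i \geq N$) and $b$ is an arrow out of $w_k$ distinct from $\alpha_{k+1}$ (indices read modulo $n$). Otherwise $m_i \cdot b$ would be a nonzero element of $P_{i+1} = k m_{i+1}$, forcing $t(b) = x_{i+1} \in C$, say $t(b) = w_l$; then the closed path $b \cdot \alpha_{l+1} \alpha_{l+2} \cdots \alpha_k$ has, by Lemma \ref{lem.simpleclosedsubs}, a simple-cycle subquiver that contains $b$ but not $\alpha_{k+1}$, hence is distinct from $C$ while still sharing $w_k$ with $C$, contradicting the absence of doubly cyclic vertices. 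Thus $\phi$ descends to a graded surjection $\bar\phi : \cO_v \to P_{\geq N}(N)$, and since both modules have dimension one in each non-negative degree, $\bar\phi$ is an isomorphism.

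Finally, $P/P_{\geq N}$ is finite-dimensional, hence torsion, so $\pi^* P \cong \pi^* P_{\geq N} \cong \pi^* \cO_v(-N)$, and Corollary \ref{cor.shiftofpoint} rewrites this as $\pi^* \cO_{v'}$ for some cyclic vertex $v'$ on $C$, completing the proof.
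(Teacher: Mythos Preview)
Your proof is correct and follows essentially the same route as the paper's: identify the sequence of support vertices of $P$, recognize that its tail lives on a single simple cycle, exhibit $P_{\geq N}(N)\cong\cO_v$, and conclude via Corollary~\ref{cor.shiftofpoint}. You are more explicit in places---invoking Lemma~\ref{lem.simpleclosedsubs} to pin down the cycle and ruling out stray arrows by a doubly-cyclic-vertex contradiction---whereas the paper simply asserts that any infinite path in such a $Q$ has the form $qp^{\infty}$ and that consecutive cycle vertices are joined by a unique arrow.
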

\begin{proof}
Let $P$ be any point module. For each $i$, there is a unique vertex $v_i$ such that $P_ie_{v_i}=P_i$ and $P_ie_u=0$ for all $u\neq v_i$. Hence, we have an infinite sequence of vertices $\bv=(v_0,v_1,\ldots)$. Let $v_i$ and $v_{i+1}$ be two vertices in $\bv$. Since $P_{i+1}=P_ikQ_1$, there must be an arrow from $v_i$ to $v_{i+1}$. Hence, the sequence $\bv$ is the sequence of vertices for some infinite path $(v_0,a_1,v_1,\ldots)$ in $Q$. As $kQ$ has finite GK-dimension, every infinite path in $Q$ must be of the form $qp^{\infty}$ where $q$ is a finite path, $p=(u_0,b_1,\ldots,b_m,u_m)$ is a simple cycle and $p^{\infty}$ is the infinite path which just continually loops around $p$.

Let $n$ be the length of the path $q$ and consider the point module $P':=P_{\geq n}(n)$. The infinite sequence of vertices associated to $P'$ is just 
$$
(u_0,u_1,\ldots,u_m=u_0,u_1,\ldots).
$$ 
Hence, the only arrow which does not annihilate $P'_{lm+i}$ is the arrow $a_{i+1}:u_{i}\to u_{i+1}$ since $a_{i+1}$ is the only arrow from $u_{i}$ to $u_{i+1}$. Hence, it can be seen that $P'$ is isomorphic to the cyclic point module $\cO_{u_0}$.   

As $P_{\geq n}(n) \cong \cO_{u_0}$, we get 
$$
\pi^*P(n) \cong \pi^*P_{\geq n}(n) \cong \pi^*\cO_{u_0} \Rightarrow \pi^*P \cong \pi^*\cO_{u_0}(-n)
$$
By Corollary \ref{cor.shiftofpoint}, $\pi^*\cO_{u_0}(-n)\cong \pi^*\cO_{v}$ for some cyclic vertex $v$ which shows $\pi^*P \cong \pi^*\cO_v$ for a cyclic vertex $v$.
\end{proof}

\begin{proposition} \label{prop.high1sispoint}
Suppose $kQ$ has finite GK-dimension. Let $M$ be a graded right $kQ$-module such that $\dim M_j=1$ for $j\gg 0$. Then $\pi^*(M)\neq 0$ if and only if there is a cyclic vertex $w$ such that $\pi^*(M) \cong \pi^*\cO_w$.
\end{proposition}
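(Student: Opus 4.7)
The plan is to handle the reverse direction immediately by observing that $\cO_w$ is cyclically generated by $e_w$ with $\dim(\cO_w)_j = 1$ for every $j \ge 0$, so $\cO_w$ is not a torsion module and hence $\pi^*\cO_w \ne 0$. For the forward direction, I assume $\pi^*M \ne 0$ and choose $n$ with $\dim M_j = 1$ for every $j \ge n$. Since $M$ and $M_{\ge n}$ are tails equivalent, replacing $M$ by $M_{\ge n}$ does not alter $\pi^*M$, so I may assume $M_j = 0$ for $j < n$ and $\dim M_j = 1$ for $j \ge n$. In this regime each $M_j$ is supported at a unique vertex $v_j$, and picking a basis vector $m_j$ of $M_j$, right multiplication by $kQ_1$ sends $m_j$ either to $0$ or onto $M_{j+1} = km_{j+1}$.

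The key step is an internal direct sum decomposition of $M$ along ``chains''. Call the pair $(j, j+1)$ a link when $m_j \cdot kQ_1 = M_{j+1}$, and partition $\{n, n+1, \ldots\}$ into its maximal intervals $[a, b]$ (with $b \in \NN \cup \{\infty\}$) of consecutive links. For each chain $[a,b]$, the cyclic submodule $m_a \cdot kQ$ equals $km_a \oplus km_{a+1} \oplus \cdots \oplus km_b$, using that $m_b \cdot kQ_{\ge 1} = 0$ whenever $b$ is finite. Because distinct chains are supported in disjoint degrees and each $M_j$ is one-dimensional, these cyclic submodules fill $M$ and form an internal direct sum $M = \bigoplus_{[a,b]} m_a \cdot kQ$. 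A finite chain contributes a finite-dimensional, hence torsion, summand; at most one chain can be infinite.

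To conclude, if no chain is infinite then $M$ is a direct sum of torsion submodules, hence torsion, contradicting $\pi^*M \ne 0$. Therefore exactly one chain $[a, \infty)$ is infinite, and the corresponding summand $P := m_a \cdot kQ$ has the property that $P(a)$ is cyclic with one-dimensional graded pieces in every non-negative degree, i.e., $P(a)$ is a point module. Proposition \ref{prop.everypointiscyclic} then gives $\pi^*P(a) \cong \pi^*\cO_v$ for some cyclic vertex $v$, and Corollary \ref{cor.shiftofpoint} rewrites $\pi^*M \cong \pi^*P \cong \pi^*\cO_v(-a)$ as $\pi^*\cO_w$ for some cyclic vertex $w$. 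The step I expect to require the most care is the chain decomposition itself: one must check both that each cyclic submodule $m_a \cdot kQ$ really terminates at $m_b$ when $b < \infty$ (so that actions do not spill over into the next chain) and that the summands cover $M$ in every degree. Both follow from the one-dimensionality of the graded pieces $M_j$, but the bookkeeping is where potential confusion lies.
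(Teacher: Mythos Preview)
Your proof is correct, and the overall arc matches the paper's: reduce to a shifted point module and then invoke Proposition~\ref{prop.everypointiscyclic} and Corollary~\ref{cor.shiftofpoint}. The route to that reduction, however, differs. The paper bypasses your chain decomposition entirely: from $\pi^*M\neq 0$ it simply picks a homogeneous non-torsion element $m\in M_n$ (with $n$ large enough that all graded pieces above it are one-dimensional), observes that $m\cdot kQ_i\neq 0$ for every $i>0$, and concludes from $\dim M_{n+i}=1$ that $m\cdot kQ_i=M_{n+i}$, so $M_{\ge n}=m\cdot kQ$ is already a shifted point module. Your decomposition into maximal link-intervals reaches the same conclusion by a more structural argument---in effect you are proving that such a non-torsion $m$ exists by showing that some chain must be infinite---whereas the paper extracts $m$ directly from the hypothesis $\pi^*M\neq 0$. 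Your approach has the mild advantage of making the torsion part of $M$ explicit as a direct summand; the paper's is shorter and avoids the bookkeeping you flagged.
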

\begin{proof}
Suppose $\pi^*(M)$ is not zero. Find $n'\in \NN$ such that $\dim(M_i)=1$ for $i\geq n'$. The object $\pi^*(M_{\geq n'})$ is also nonzero which implies there is a homogeneous element $m\in M_{n}$, with $n\geq n'$, that is not torsion. Hence, for every positive integer $i$, there is a path $p$ of length $i$ that does not kill $m$. Therefore, $mkQ_i$ is a nonzero subspace of $M_{n+i}$ which implies $mkQ_{i}=M_{n+i}$ as $\dim M_{n+i}=1$. Hence, $M_{\geq n}=mkQ$. 

Since the module $M_{\geq n}(n)$ is generated in degree zero and satisfies 
$$
\dim (M_{\geq n}(n))_i=1
$$ 
for all $i>0$, $M_{\geq n}(n)$ is a point module. Hence, by Proposition \ref{prop.everypointiscyclic}, there is a cyclic vertex $v$ such that $\pi^*M_{\geq n}(n) \cong \pi^*\cO_v$. Thus, $\pi^*M_{\geq n} \cong \pi^*\cO_v(-n) \cong \pi^*\cO_w$ for some cyclic vertex $w$ from which it follows that 
$$
\pi^*M \cong \pi^*M_{\geq n} \cong \pi^*\cO_w.
$$ 
\end{proof}

The following theorem shows that every non-zero object in $\QGr kQ$ contains a point module.  

\begin{Prop} \label{prop.hasapoint}
Let $kQ$ be a path algebra of finite GK-dimension. For any $M\in \Gr kQ$, $\pi^*(M)=0$ if and only if 
$$
\Hom_{\QGr kQ}(\pi^*\cO_v,\pi^*(M))=0
$$
for every cyclic vertex $v$.
\end{Prop}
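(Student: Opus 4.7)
The forward direction is immediate, so the work lies in the converse: assume $\pi^*(M) \neq 0$ and produce a cyclic vertex $u$ with $\Hom_{\QGr kQ}(\pi^*\cO_u, \pi^*M) \neq 0$. Replacing $M$ by $M/\tau M$, I may assume $M$ is torsion-free, in which case
\[
\Hom_{\QGr kQ}(\pi^*\cO_v,\pi^*M) \;=\; \varinjlim_N \Hom_{\Gr kQ}((\cO_v)_{\geq N}, M).
\]
Combined with Corollary \ref{cor.shiftofpoint}, the task reduces to producing a cyclic vertex $u$ and a non-zero homogeneous element $y \in M$ supported at $u$ with $y \cdot J = 0$, where $J \subset e_u kQ$ is the ``exit submodule'' appearing in the resolution (\ref{eq.cyclicpoint}); equivalently, $y$ must be annihilated by every path beginning at $u$ that leaves the simple cycle through $u$. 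Such a $y$ is exactly the data of a non-zero graded map $\cO_u \to M$ (up to a shift).

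To build $y$, I would start with any non-zero homogeneous $m_0 \in M$. Since $M$ is torsion-free, $m_0$ is annihilated by no $kQ_{\geq i}$, so the tree of paths at the support of $m_0$ that do not kill $m_0$ is infinite; as $Q$ is finitely branching, König's lemma yields an infinite path $a_1 a_2 \cdots$ with $m_0 \cdot (a_1 \cdots a_i) \neq 0$ for every $i$. Exactly as in the proof of Proposition \ref{prop.everypointiscyclic}, finite GK-dimension forces this infinite path to be eventually of the form $q p^\infty$ for some simple cycle $p$. Setting $m_1 := m_0 \cdot q$ gives a non-zero element supported at a vertex $u_1 \in p$ which is not killed by any power of $p$.

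Now I iterate. If $m_1 \cdot J_{u_1} = 0$, take $y = m_1$ and stop. Otherwise there is a path $r$ beginning at $u_1$ that leaves $C_1 := Q(p)$ and satisfies $m_2 := m_1 \cdot r \neq 0$, supported at some vertex outside $C_1$. Applying the previous step to $m_2$ produces an element supported on some simple cycle $C_2$. The crucial claim is that $C_2 \neq C_1$: a return to $C_1$ from outside, combined with a segment of $C_1$, would yield a closed subquiver containing two distinct simple cycles through a common vertex, contradicting Lemma \ref{lem.simpleclosedsubs}. Hence $C_1 \prec C_2$ strictly in the preorder $\mC(Q)$, and since $\mC(Q)$ is a finite poset the iteration terminates. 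The terminal element $y$ satisfies the required annihilation condition and supplies the desired map.

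The main obstacle I expect is the iteration step: proving that failure of the annihilation condition for the current element forces the next simple cycle to lie strictly higher in $\mC(Q)$, with no possibility of revisiting an already-visited cycle. Both facts rest on the no-doubly-cyclic-vertex characterisation of finite GK-dimension, delivered via Lemma \ref{lem.simpleclosedsubs}.
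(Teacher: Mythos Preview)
Your argument is correct and complete. The reduction to finding a homogeneous element $y$ annihilated by the exit ideal is exactly right, the K\"onig's-lemma step producing an eventually periodic infinite path is the same mechanism the paper uses in Proposition~\ref{prop.everypointiscyclic}, and your termination argument via the strict chain $C_1 \prec C_2 \prec \cdots$ in the finite poset $\mC(Q)$ goes through (the ``no revisiting'' worry is automatically handled by transitivity of $\prec$).

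The paper's proof, however, bypasses the iteration entirely. Rather than starting from an arbitrary element and climbing, it begins by choosing a simple cycle $C$ that is \emph{maximal} in $\mC(Q)$ among those containing a vertex in the support of $M$, picks $m$ supported there, and argues directly that $m$ is annihilated by every exiting path: if not, torsion-freeness would force the existence of a strictly larger cycle still in the support of $M$, contradicting maximality. So the paper front-loads the poset argument into the choice of starting point, whereas you discover the right cycle dynamically. Your version is perhaps more constructive and makes the role of Lemma~\ref{lem.simpleclosedsubs} explicit at each step; the paper's version is shorter and avoids the need to verify that each hop lands on a genuinely new cycle.
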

\begin{proof}
Clearly $\pi^*(M)=0$ implies all the $\Hom$ spaces are zero. Suppose $\pi^*(M)$ is not a zero object. By replacing $M$ with $M/\tau M$ we may assume $M$ is torsion free.

Let $C$ be a simple cycle in $\mC(Q)$ maximal with respect to containing a vertex in the support of $M$ and denote such a vertex by $v_0$. Pick a nonzero homogeneous $m\in M$ such that $m=me_{v_0}$. Write $C=Q(p)$ where $p=(v=v_0,a_1\cdots a_n,v_n=v)$. 

Suppose there is a path of the form $p^la_1\cdots a_ib$, with $b\neq a_{i+1}$, for which 
$$
m':=mp^la_1\cdots a_ib\neq 0.
$$ 
Since $m'$ is not torsion, there are paths of arbitrarily high degree which do not annihilate it. Hence, we can find a path $q'$ from $t(b)$ to a cyclic vertex $v'$ in a simple cycle $C'\neq C$ such that $m'q'=mqp^la_1\cdots a_ibq'\neq 0$. This is a contradiction however as $C\prec C'$ and $C'$ containing a vertex in the support of $M$ implies $C$ is not maximal with this property. Therefore, $mp^la_1\cdots a_ib=0$ for all such paths. From this we find the only elements in $kQ$ which do not necessarily annihilate $m$ are those of the form $p^la_1\cdots a_i$. However, as $m$ is not torsion, none of the elements $p^la_1\cdots a_i$ annihilate $m$. Hence, $mkQ$ is a nonzero submodule which has dimension one in high degree. 

Thus, we have a submodule $mkQ$ of $M$ which has dimension $1$ in high degree and $\pi^*(mkQ)\neq 0$. By Proposition \ref{prop.high1sispoint}, there is a point module $\cO_{w}$ such that $\pi^*(mkQ)\cong \pi^*\cO_w$. Thus,
$$
\Hom_{\QGr kQ}(\pi^*\cO_w,\pi^*M)\neq 0.
$$   
\end{proof}

We can now prove Theorem \ref{thm.main1Q}.

\begin{Thm}[Theorem \ref{thm.main1Q}.] \label{thm.simpleispoint}
Let $kQ$ be a path algebra of finite GK-dimension. The objects 
$$
\{\cO_v\;|\;v \text{ is a cyclic vertex}\}
$$
form a complete set of representatives of the isomorphism classes of simple objects in $\QGr kQ$.
\end{Thm}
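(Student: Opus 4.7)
Three statements must be verified: (a) each $\pi^*\cO_v$ is a simple object of $\QGr kQ$; (b) every simple object of $\QGr kQ$ is isomorphic to some $\pi^*\cO_v$; and (c) distinct cyclic vertices yield non-isomorphic simple objects. Statement (c) is immediate from Lemma \ref{lem.cyclicpointsarediff}, so the real work is in (a) and (b).

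For (a), I would use the standard fact that a subobject of $\pi^*\cO_v$ is represented by a graded submodule $N \subseteq \cO_v$, and it is nonzero in $\QGr kQ$ exactly when $N$ fails to lie in $\Tors kQ$. Since $\dim_k(\cO_v)_i = 1$ for every $i$, each $N_i$ is either $0$ or all of $(\cO_v)_i$. The explicit description of $\cO_v$ given in Section \ref{sec.cyclicpoints} shows that, for the simple cycle $p = (v,a_1,\ldots,a_n,v)$, each basis element $p^m a_1\cdots a_i$ is annihilated by every arrow except $a_{i+1}$ (with indices read cyclically), which carries it to the next basis element. In particular $(\cO_v)_j \cdot kQ_1 = (\cO_v)_{j+1}$ for every $j\geq 0$. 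Consequently, if $N_j \neq 0$ for some $j$, then by induction $N_k = (\cO_v)_k$ for all $k \geq j$, so $N$ and $\cO_v$ agree in high degree and $\pi^*N = \pi^*\cO_v$. Hence $\pi^*\cO_v$ is simple.

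For (b), let $S$ be an arbitrary simple object of $\QGr kQ$. Since $S\neq 0$, Proposition \ref{prop.hasapoint} furnishes a cyclic vertex $v$ with $\Hom_{\QGr kQ}(\pi^*\cO_v,S) \neq 0$. Any nonzero morphism $\phi: \pi^*\cO_v \to S$ between simple objects must be an isomorphism: its kernel is a proper subobject of the simple object $\pi^*\cO_v$, hence zero; its image is a nonzero subobject of the simple object $S$, hence all of $S$. Therefore $S \cong \pi^*\cO_v$.

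The principal obstacle lies in step (a): one must cleanly combine the one-dimensional-graded-piece hypothesis with the local structure of $\cO_v$ around the cycle $p$ to conclude that every nonzero submodule agrees with $\cO_v$ in high degree. Once this is in place, step (b) is a formal consequence of Proposition \ref{prop.hasapoint} together with the Schur-type argument above, and step (c) is already recorded in Lemma \ref{lem.cyclicpointsarediff}.
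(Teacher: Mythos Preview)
Your proposal is correct and follows essentially the same route as the paper: the paper's proof also invokes Lemma \ref{lem.cyclicpointsarediff} for (c), Proposition \ref{prop.hasapoint} together with the Schur-type argument for (b), and the simplicity of $\pi^*\cO_v$ for (a). The only difference is that the paper handles (a) in one line by citing the general fact, stated in the introduction, that point modules determine simple objects in $\QGr A$; your explicit argument (nonzero graded submodules of a point module must contain a full tail $(\cO_v)_{\geq j}$) is precisely the proof of that general fact specialized to $\cO_v$, so you are simply spelling out what the paper leaves implicit rather than taking a different approach.
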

\begin{proof}
Since $\cO_v$ is a point module, $\cO_v$ is a simple object in $\QGr kQ$ for each cylic vertex $v$. By Lemma \ref{lem.cyclicpointsarediff}, $\cO_v$ and $\cO_w$ are not isomorphic unless $v=w$. If $S$ is any module for which $\pi^*S$ is a simple object, then by Proposition \ref{prop.hasapoint}, 
$$
\Hom_{\QGr kQ}(\pi^*\cO_v,\pi^*S)\neq 0
$$ 
for some cyclic vertex $v$ which implies $\pi^*\cO_v \cong \pi^*S$.
\end{proof}

\begin{corollary} \label{cor.cyclicvertispreserved}
Let $kQ$ and $kQ'$ be path algebras of finite GK-dimension. If $\QGr kQ\equiv \QGr kQ'$, then $Q$ and $Q'$ have the same number of cyclic vertices. 
\end{corollary}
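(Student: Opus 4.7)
The plan is to deduce the result directly from Theorem \ref{thm.simpleispoint} together with the fact that an equivalence of abelian categories preserves (isomorphism classes of) simple objects. First I would recall that by Theorem \ref{thm.simpleispoint}, the map $v \mapsto \cO_v$ induces a bijection between the set of cyclic vertices of $Q$ and the set of isomorphism classes of simple objects of $\QGr kQ$; the analogous bijection holds for $Q'$. In particular, the cardinality of the set of cyclic vertices of $Q$ equals the cardinality of the set of isomorphism classes of simple objects of $\QGr kQ$, and similarly for $Q'$.

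Next, let $F: \QGr kQ \to \QGr kQ'$ be an equivalence of categories. Since simplicity is a categorical property (an object $S$ is simple iff $S$ is nonzero and its only subobjects are $0$ and $S$, a condition preserved by any equivalence of abelian categories), $F$ restricts to a bijection between the isomorphism classes of simple objects of $\QGr kQ$ and those of $\QGr kQ'$. Combining this with the two bijections above yields a bijection between the cyclic vertices of $Q$ and those of $Q'$, which proves the corollary.

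There is essentially no obstacle here: the content lies entirely in Theorem \ref{thm.simpleispoint}, which has already been established, and in the categorical invariance of the collection of simple objects. The only minor point to verify is that the categories $\QGr kQ$ and $\QGr kQ'$ are abelian (so that the notion of ``simple object'' is available and preserved by equivalences), but this is immediate from the fact that $\Tors$ is a Serre subcategory of $\Gr$ and Gabriel quotients by Serre subcategories are abelian.
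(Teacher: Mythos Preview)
Your proposal is correct and follows exactly the same approach as the paper's proof: both deduce the result from Theorem \ref{thm.simpleispoint} by noting that the number of cyclic vertices equals the number of isomorphism classes of simple objects in $\QGr kQ$, and that an equivalence of categories preserves this count. The paper's proof is simply a one-line version of what you have written.
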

\begin{proof}
This follows from the fact that the number of cyclic vertices is precisely the number of simple objects in $\QGr kQ$.
\end{proof}

\section{Extensions between point modules over path algebras of finite GK-dimension.}

\subsection{} Let $kQ$ be a path algebra of finite GK-dimension. In this section a condition is given which determines when $\Ext^1_{\QGr kQ}(\cO_v,\cO_w)\neq 0$ for cyclic vertices $v$ and $w$. 

Let $v$ and $w$ be cyclic vertices. To compute $\Ext^1_{\QGr kQ}(\cO_v,\cO_w)$ we can use the projective resolution used to define $\cO_v$:
$$
0 \to \bigoplus p^ma_1\cdots a_ibkQ  \to e_vkQ \to \cO_v \to 0.
$$
The notation follows that developed in Section \ref{sec.cyclicpoints}.

Using the long exact sequence associated to $\Ext$ and the fact that $\pi^*e_vkQ$ is projective we get an exact sequence
\begin{eqnarray} \label{seq.extcomp}
0\to \Hom_{\QGr kQ}(\cO_v,\cO_w) \to \Hom_{\QGr kQ}(e_vkQ,\cO_w)\to  \\
\to \Hom_{\QGr kQ}\left(\bigoplus (p^ma_1\cdots a_ibkQ),\cO_w\right)\to \Ext^1_{\QGr kQ}(\cO_v,\cO_w)\to 0. \nonumber
\end{eqnarray}
This exact sequence will be useful to determine when $\Ext^1(\cO_v,\cO_w)=0$. However, in the case where there are non-trivial extensions, we will see how to construct a large family of them explicitly.  

It will be useful to work over Veronese subalgebras when computing $\Ext^1(\cO_v,\cO_w)$. Hence, the next subsection recalls the relationship between an algebra and its Veronese subalgebras.

\subsection{Veronese subalgebras of path algebras.}
This subsection recalls a theorem of A.B. Verevkin which will be helpful in computing $\Ext^1(\cO_v,\cO_w)$.

Let $A$ be a locally finite graded $k$-algebra generated by $A_1$ over $A_0$ and for $d\in \NN\-\{0\}$, let $A^{(d)}=\oplus_{i\geq 0} A_{id}$ be the $d$-th Veronese subalgebra. Define $V:\Gr A\to \Gr A^{(d)}$ to be the functor which takes $M\in \Gr A$ to $V(M)\in \Gr A^{(d)}$ where $V(M)_i=M_{i\cdot d}$. Conversely, define $T:\Gr A^{(d)}\to \Gr A$ by $T(N)=N\otimes_{A^{(d)}}A$ with grading
$$
(N\otimes_{A^{(d)}}A)_j=\sum_{n\cdot d+l=j}N_n\otimes_{A^{(d)}}A_l.
$$

\begin{Thm}[\cite{ABV} Section 4.] \label{thm.ABV}
Using the above Notation, the functors $V$ and $T$ induce a $k$-linear equivalence of categories
$$
\QGr A \equiv \QGr A^{(d)}
$$
for any $d\geq 1$.
\end{Thm}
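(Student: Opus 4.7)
The plan is to exhibit $T$ and $V$ as an adjoint pair $T\dashv V$ that descends to the quotient categories, and then to show that $\bar T$ is fully faithful and $\bar V$ is faithful, which together force both to be quasi-inverse equivalences.

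First I would set up the adjunction at the level of graded modules. The tensor-hom adjunction gives $\Hom_{\Gr A}(T(N),M)\cong \Hom_{\Gr A^{(d)}}(N,V(M))$, since a degree-preserving $A$-linear map out of $N\otimes_{A^{(d)}}A$ is determined by its restriction to $N\otimes 1$, and the grading conventions match ($n\in N_i$ is placed in degree $id$ of $T(N)$, while $V(M)_i=M_{id}$). Next I would verify that both $V$ and $T$ preserve torsion modules. For $V$ this is immediate since $A^{(d)}_{\ge\lceil s/d\rceil}\subseteq A_{\ge s}$. For $T$, any $a\in A$ of degree $\ge kd$ is a sum of monomials each of which, because $A$ is generated by $A_1$, factors as a product with an initial segment of degree exactly $kd$; that initial segment lies in $A^{(d)}_k$ and can be pushed through the tensor, annihilating $n\otimes a$ whenever $nA^{(d)}_{\ge k}=0$. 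Thus $V$ and $T$ descend to $\bar V:\QGr A\to\QGr A^{(d)}$ and $\bar T:\QGr A^{(d)}\to\QGr A$, and since $V$ is exact and $T$ is right exact the adjunction $\bar T\dashv\bar V$ is inherited.

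Next I would show that the unit $\eta_N:N\to VT(N)$ is already an isomorphism of graded modules. Unwinding the construction, $VT(N)_i=\sum_{nd+l=id}N_n\otimes_{A^{(d)}}A_l$; the constraint $l=(i-n)d$ forces $A_l=A^{(d)}_{i-n}$, so the $A^{(d)}$-tensor relation rewrites every element uniquely as $n'\otimes 1$ with $n'\in N_i$. Consequently $\bar T$ is fully faithful.

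The main obstacle is showing that $\bar V$ is faithful. Suppose $f:M\to M'$ satisfies $\bar V(f)=0$, so $f_{id}=0$ for all $i\gg 0$. Given $m\in M_j$ and a monomial $a=b_1\cdots b_s\in A_s$ with $s$ large, factor $a=a_1a_2$ where $\deg a_1$ is chosen so that $j+\deg a_1$ is a large multiple of $d$; then $f(m)\cdot a=f(ma_1)\cdot a_2$ and $f(ma_1)=f_{j+\deg a_1}(ma_1)=0$, so $f(m)\cdot A_{\ge s_0}=0$ for some $s_0$, proving $\Im f$ is torsion and hence $f=0$ in $\QGr A$. Because $\bar V$ is exact and faithful it reflects zero objects, so applying $\bar V$ to the kernel and cokernel of the counit $\epsilon_M:\bar T\bar V M\to M$ and invoking the triangle identity $\bar V(\epsilon_M)\circ\eta_{\bar V M}=\id$ (whose left factor must therefore be an iso, since $\eta$ is) forces both $\ker\epsilon_M$ and $\coker\epsilon_M$ to vanish. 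Thus $\epsilon_M$ is an isomorphism in $\QGr A$ for every $M$, and $\bar T,\bar V$ are quasi-inverse equivalences.
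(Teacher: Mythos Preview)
The paper does not prove this statement: it is quoted from Verevkin \cite{ABV} and used as a black box, so there is no argument in the paper to compare yours against. Your outline follows the standard route and is essentially correct, but two steps deserve tightening.

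First, ``$T$ preserves torsion'' by itself does not guarantee that $\pi^*_A\circ T$ factors through $\QGr A^{(d)}$, since $T$ is only right exact and you must also check that morphisms with torsion \emph{kernel} go to morphisms with torsion kernel. The cleanest repair uses ingredients you already have: once $\eta:\id\Rightarrow VT$ is a natural isomorphism and $V$ reflects torsion (equivalently $\bar V$ reflects zero objects), applying $V$ to $\ker T(f)$ shows it is torsion whenever $\ker f$ and $\coker f$ are.

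Second, the line ``$\bar V(f)=0$, so $f_{id}=0$ for all $i\gg 0$'' is not what $\bar V(f)=0$ literally says; it only tells you that $V(\Im f)$ is torsion. Replace the target $M'$ by $M'/\tau M'$ first: then $V(\Im f)$ sits inside the torsion-free module $V(M')$, hence is zero, giving $f_{id}=0$ for \emph{all} $i$, and your factorization argument goes through. (Note that local finiteness of $A$ is used implicitly here, to make the torsion bound uniform over a basis of $A_r$.) Equivalently and more cleanly, prove the object-level statement ``$V(L)$ torsion $\Rightarrow L$ torsion,'' which for an exact functor is the same as faithfulness of $\bar V$.
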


Let $Q$ be a quiver with incidence matrix $M_Q$. For any natural number $d>0$, let $Q^{(d)}$ be the quiver associated with the $d$-th power $M_Q^d$. Since the $(v,u)$ entry of $M_Q^d$ is the number of paths from $v$ to $u$ of length $d$, it is easy to see that $k(Q^{(d)})$ is isomorphic, as a graded algebra, to the $d$-th Veronese subalgebra $kQ^{(d)}$ of $kQ$. Paul Smith observed this in \cite{Sm1} and uses A.B. Verevkin's result above to determine $\QGr kQ\equiv \QGr k(Q^{(d)})$.

\begin{Lem} \label{lem.pointtopoint}
The functor $V:\Gr kQ \to \Gr kQ^{(d)}$ sends the cyclic point module $\cO_v\in \Gr kQ$ to the cyclic point module based at $v$ in $\Gr kQ^{(d)}$. Also, $V(e_ikQ)=e_ikQ^{(d)}$. 
\end{Lem}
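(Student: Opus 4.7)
The plan is to tackle the second assertion first, as the first builds on it. By the definition of $Q^{(d)}$ (whose incidence matrix is $M_Q^d$), arrows in $Q^{(d)}$ from $i$ to $j$ correspond bijectively to length-$d$ paths in $Q$ from $i$ to $j$; iterating, length-$n$ paths in $Q^{(d)}$ starting at $i$ correspond bijectively to length-$nd$ paths in $Q$ starting at $i$. These paths form bases of both $V(e_ikQ)_n = (e_ikQ)_{nd}$ and $(e_ikQ^{(d)})_n$, and the right action of $kQ^{(d)}_m = kQ_{md}$ is by concatenation on either side, so this basis identification promotes to the equality $V(e_ikQ) = e_ikQ^{(d)}$ of graded $kQ^{(d)}$-modules.

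For the first assertion, $V$ is exact (it just restricts to graded pieces in degrees divisible by $d$), so the surjection $e_vkQ \twoheadrightarrow \cO_v$ yields a surjection $e_vkQ^{(d)} = V(e_vkQ) \twoheadrightarrow V(\cO_v)$. Since $V(\cO_v)_n = (\cO_v)_{nd}$ is one-dimensional for all $n \ge 0$ and $V(\cO_v)_0 = ke_v$ is supported at $v$, the module $V(\cO_v)$ is a point module over $kQ^{(d)}$ based at $v$ in degree $0$.

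To identify $V(\cO_v)$ with the cyclic point module $\cO_v^{(d)} \in \Gr kQ^{(d)}$, I would argue in the spirit of Proposition \ref{prop.everypointiscyclic}. First, $v$ is a cyclic vertex of $Q^{(d)}$: the simple cycle $p$ of length $n$ at $v$ in $Q$ yields $p^d$, a closed path of length $nd$ in $Q$, i.e.\ a closed path of length $n$ at $v$ in $Q^{(d)}$; moreover $Q^{(d)}$ has no doubly cyclic vertices since by Theorem \ref{thm.ABV} $kQ^{(d)}$ still has finite GK-dimension. Hence $v$ lies on a unique simple cycle $p'$ in $Q^{(d)}$, and its vertex sequence is seen to be the purely periodic sequence $(v_{id \bmod n})_{i \ge 0}$ of period $n/\gcd(n,d)$, which is precisely the vertex sequence of $V(\cO_v)$. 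Since consecutive vertices of $p'$ are joined by unique arrows in $Q^{(d)}$, exactly as in the proof of Proposition \ref{prop.everypointiscyclic} the right $kQ^{(d)}$-action on the one-dimensional pieces of $V(\cO_v)$ is forced to coincide with that on $\cO_v^{(d)}$, yielding the desired isomorphism. The main obstacle is the bookkeeping needed to pin down $p'$ explicitly and to check that the kernels of the two surjections $e_vkQ^{(d)} \twoheadrightarrow V(\cO_v)$ and $e_vkQ^{(d)} \twoheadrightarrow \cO_v^{(d)}$ agree; but since both modules have one-dimensional graded pieces with matching vertex supports, in each degree the kernel is forced to be the codimension-one subspace spanned by those basis paths of length $id$ in $Q$ from $v$ that leave the cycle $p$ at some stage, so the two agree.
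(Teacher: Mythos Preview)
The paper states this lemma without proof, so there is no argument to compare against. Your proof is correct and fills in the omitted details appropriately.

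A couple of minor remarks. For the second assertion, your basis-matching argument is exactly right and is the natural way to see it. For the first assertion, your argument via Proposition~\ref{prop.everypointiscyclic} works, but you can shortcut the ``bookkeeping'' you flag as an obstacle: since $Q$ has no doubly cyclic vertices, once a path leaves the simple cycle $p$ it can never return, so for cyclic vertices $v_{id \bmod n}$ and $v_{(i+1)d \bmod n}$ on $p$ there is exactly one path of length $d$ between them in $Q$ (the one that stays on $p$), hence exactly one arrow between them in $Q^{(d)}$. This immediately pins down the simple cycle $p'$ in $Q^{(d)}$ and shows that every arrow of $Q^{(d)}$ not on $p'$ annihilates $V(\cO_v)$, so $V(\cO_v)$ is the cyclic point module at $v$ on the nose, without needing to compare kernels of two surjections.
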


\subsection{Extensions between point modules.} \label{sec.extofpoints}

The induced equivalence $V:\QGr kQ \to \QGr kQ^{(d)}$ gives an isomorphism
$$
\Ext^1_{\QGr kQ}(\cO_v,\cO_w)\cong \Ext^1_{\QGr kQ^{(d)}}(V(\cO_v),V(\cO_w)).
$$
So to determine when $\Ext^1_{\QGr kQ}(\cO_v,\cO_w)\neq 0$ we may first move the question to a suitable Veronese sub-algebra. 

\begin{proposition} \label{prop.nopath=noext}
Let $v$ and $w$ be cyclic vertices in a quiver $Q$ with finite GK-dimension. Let $n$ and $m$ be the lengths of the simple cycles which contain $v$ and $w$ respectively. If there are no paths from $v$ to $w$ whose length is a multiple of $nm$, then 
$$
\Ext^1_{\QGr kQ}(\cO_v,\cO_w)=0.
$$
\end{proposition}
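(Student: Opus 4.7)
The plan is to reduce to a suitable Veronese subalgebra where the computation becomes transparent. Apply Verevkin's equivalence (Theorem \ref{thm.ABV}) with $d=nm$; by Lemma \ref{lem.pointtopoint} the functor $V$ sends $\cO_v$ and $\cO_w$ to the cyclic point modules based at $v$ and $w$ in $\Gr kQ^{(nm)}$, so it suffices to show $\Ext^1_{\QGr kQ^{(nm)}}(V(\cO_v),V(\cO_w))=0$. The advantage of working over $kQ^{(nm)}$ is that the simple cycles through $v$ and $w$ in $Q$ become loops in $Q^{(nm)}$ --- namely $\ell:=p^m$ at $v$ and an analogous $\ell'$ at $w$, where $p$ is the simple cycle of length $n$ through $v$ --- and because $Q$ has no doubly cyclic vertices these are the only simple cycles through $v$ and $w$ in $Q^{(nm)}$. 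Consequently both $V(\cO_v)$ and $V(\cO_w)$ are supported on the single vertices $v$ and $w$, respectively.

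Next I would write down the projective resolution of $V(\cO_v)$ furnished by Section \ref{sec.cyclicpoints}. Because the simple cycle at $v$ in $Q^{(nm)}$ is just the single loop $\ell$ (so the parameter ``$n$'' of that section equals $1$), the resolution takes the simple form
$$0 \longrightarrow \bigoplus_{s\geq 0,\,b}\ell^s b\cdot kQ^{(nm)} \longrightarrow e_v kQ^{(nm)} \longrightarrow V(\cO_v) \longrightarrow 0,$$
where $b$ ranges over the arrows of $Q^{(nm)}$ with source $v$ and $b\neq\ell$. Applying $\pi^*$ and $\Hom_{\QGr kQ^{(nm)}}(-,V(\cO_w))$, the analogue of (\ref{seq.extcomp}) yields a surjection
$$\Hom_{\QGr kQ^{(nm)}}\Bigl(\bigoplus_{s,b}\ell^s b\cdot kQ^{(nm)},\,V(\cO_w)\Bigr)\twoheadrightarrow\Ext^1_{\QGr kQ^{(nm)}}(V(\cO_v),V(\cO_w)).$$
Splitting the coproduct and using $\ell^s b\cdot kQ^{(nm)}\cong e_{t(b)}kQ^{(nm)}(-(s+1))$ reduces the problem to showing that $\Hom_{\QGr kQ^{(nm)}}(e_{t(b)}kQ^{(nm)}(-(s+1)),V(\cO_w))=0$ for every such $s$ and $b$.

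For this final step I would express the Hom in $\QGr$ as the direct limit $\varinjlim_N\Hom_{\Gr kQ^{(nm)}}((e_{t(b)}kQ^{(nm)})_{\geq N}(-(s+1)),V(\cO_w))$, which is valid because $V(\cO_w)$ is torsion-free. Each term in this direct limit is a product, indexed by the paths $q$ of length $N$ in $Q^{(nm)}$ with source $t(b)$, of the spaces $V(\cO_w)_{N+s+1}\cdot e_{t(q)}$. Since $V(\cO_w)$ is supported on $w$, every such factor vanishes unless $t(q)=w$. Therefore a non-zero morphism would require a path in $Q^{(nm)}$ from $t(b)$ to $w$ of some length --- equivalently a path in $Q$ from $t(b)$ to $w$ of length a positive multiple of $nm$ --- and concatenating such a path with $b$ would produce a path in $Q$ from $v$ to $w$ of length a positive multiple of $nm$, contradicting the hypothesis. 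The main obstacle is the careful bookkeeping in this last step: correctly identifying the Hom in $\QGr$ with a direct limit of Hom's in $\Gr$ out of higher truncations, and translating the vertex-support information in $Q^{(nm)}$ back into a path-length statement in $Q$.
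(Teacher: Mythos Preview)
Your proposal is correct and follows essentially the same route as the paper: pass to the $nm$-th Veronese so that the cycles at $v$ and $w$ become loops and the hypothesis becomes ``no paths from $v$ to $w$ in $Q^{(nm)}$'', then use the projective resolution (\ref{eq.cyclicpoint}) and the exact sequence (\ref{seq.extcomp}) to reduce to showing each $\Hom_{\QGr}(\ell^s b\,kQ^{(nm)},\cO_w)=0$. The only difference is in how you finish this last vanishing: you decompose $(e_{t(b)}kQ^{(nm)})_{\geq N}$ over length-$N$ paths and use the support of $\cO_w$, whereas the paper argues more directly that every element of $\ell^s b\,kQ^{(nm)}$ (and of any submodule thereof) is annihilated by $e_w$ while no nonzero element of $\cO_w$ is, so every graded morphism from a submodule of $\ell^s b\,kQ^{(nm)}$ to $\cO_w$ is zero. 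Your version is a little more elaborate but yields the same conclusion; one small imprecision is the phrase ``length a positive multiple of $nm$'' for the path from $t(b)$ to $w$, since the case $t(b)=w$ (path of length $0$) must also be excluded --- but that case is already ruled out because $b$ itself would then be a path $v\to w$ of length $nm$ in $Q$.
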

\begin{proof}
If there are no paths in $Q$ from $v$ to $w$ of length $lnm$ for any $l\in \NN$, then there are no paths from $v$ to $w$ in $Q^{(nm)}$. So by moving to $kQ^{(nm)}$ using the Veronese equivalence, we may assume there are no paths from $v$ to $w$ in $Q$. Moreover, the only path from $v$ to itself of length $nm$ is $p^m$ where $p$ is the simple cycle based at $v$. Hence, in the quiver $Q^{(nm)}$, the simple cycle based at $v$ is a loop. Similar statements apply to the vertex $w$. Thus, we can assume there are no paths from $v$ to $w$ and the simple cycles which contain $v$ and $w$ are loops. Let $\{p,b_1,\ldots,b_l\}$ be all the arrows with source $v$ where $p$ is the loop and let $q$ be the loop based at $w$. 

Since $\Ext^1_{\QGr kQ}(\cO_v,\cO_w)$ is a quotient of $\Hom_{\QGr kQ}(\bigoplus p^mb_ikQ,\cO_w)$, we just need to show the latter is zero. However, since
$$
\Hom_{\QGr kQ}(\bigoplus p^mb_ikQ,\cO_w)\cong \prod \Hom_{\QGr kQ}(p^mb_ikQ,\cO_w),
$$
we just need to show $\Hom_{\QGr kQ}(p^mb_ikQ,\cO_w)=0$ for all $(m,i)$.

Since there are no paths from $v$ to $w$, every element of $p^mb_ikQ$ is annihilated by $e_w$. However, every nonzero element of $\cO_w$ is not annihilated by $e_w$ so we see there can be no nonzero graded homomorphisms from any submodule of $p^mb_ikQ$ to $\cO_w$. Hence, if $M\subset p^mb_ikQ$, then $\Hom_{\Gr kQ}(M,\cO_w)=0$. Therefore,
$$
\Hom_{\QGr kQ}(p^mb_ikQ,\cO_w)=\varinjlim \Hom_{\Gr kQ}(M,\cO_w)=0
$$    
where the direct limit is over all $M$ such that $p^mb_ikQ/M$ is torsion.
\end{proof}    

\begin{corollary} \label{cor.nopath=noext}
Let $kQ$ be a path algebra of finite GK-dimension. If $v$ and $w$ are two distinct cyclic vertices in the same cycle, then
$$
\Ext^1_{\QGr kQ}(\cO_v,\cO_w)=0.
$$
\end{corollary}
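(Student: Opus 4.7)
The plan is to apply Proposition \ref{prop.nopath=noext} by showing that, under the no-doubly-cyclic assumption, no path from $v$ to $w$ has length a multiple of $nm$, where $n$ and $m$ are the lengths of the simple cycles through $v$ and $w$ respectively.

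First I would pin down the combinatorics. Let $C$ be the simple cycle containing $v$; since $Q$ has no doubly cyclic vertex, $C$ is the unique simple cycle through $v$, and by the same token it must equal the unique simple cycle through $w$. So $n=m$, and if we write $C=Q(p)$ with $p=(v=v_0,a_1,v_1,\ldots,a_n,v_n=v)$, then $w=v_k$ for a unique $k$ with $0<k<n$.

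The heart of the proof is the rigidity claim: every path in $Q$ from $v$ to $w$ uses only arrows of $C$. Suppose for contradiction that $p'$ is a path from $v$ to $w$ containing an arrow outside $C$, and let $q$ be the path in $C$ from $w$ back to $v$ (of length $n-k$). Then $p'q$ is a closed path at $v$, so $Q(p'q)$ is a closed subquiver. By Lemma \ref{lem.simpleclosedsubs}, $Q(p'q)$ must itself be a simple cycle, and because $p'$ contains an arrow not in $C$, this simple cycle is distinct from $C$. But both $Q(p'q)$ and $C$ contain $v$, making $v$ doubly cyclic — contradiction.

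Given this rigidity, every path from $v$ to $w$ lies inside $C$ and therefore has length $k+jn$ for some $j\ge 0$. Since $0<k<n$, no such length is divisible by $n$, and in particular none is divisible by $nm=n^2$. Proposition \ref{prop.nopath=noext} then yields $\Ext^1_{\QGr kQ}(\cO_v,\cO_w)=0$. The only real obstacle is the geometric step isolating all $v$-to-$w$ paths inside $C$; once that is handled via Lemma \ref{lem.simpleclosedsubs}, the divisibility observation and Proposition \ref{prop.nopath=noext} close the argument immediately.
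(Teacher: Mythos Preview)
Your argument is correct and follows the same route as the paper: both reduce to Proposition~\ref{prop.nopath=noext} by observing that every path from $v$ to $w$ stays inside the common simple cycle and hence has length $jn+k$ with $0<k<n$, never a multiple of $n$. The only difference is that the paper simply asserts ``the only paths in $Q$ from $v$ to $w$ have the form $p^la_1\cdots a_i$'' without justification, whereas you actually prove this rigidity step via Lemma~\ref{lem.simpleclosedsubs}; your version is thus a slightly more complete write-up of the same proof.
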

\begin{proof}
Let $p=(v=v_0,a_1,\ldots,w=v_{i+1},a_i\ldots,a_n,v_n=v)$ be the simple cycle which contains $v$ and $w$. Since the only paths in $Q$ from $v$ to $w$ have the form $p^la_1\cdots a_i$, all the paths have length $l\cdot n+i$ with $1\leq i<n$. Since $l\cdot n+i$ is not a multiple of $n$, Proposition \ref{prop.nopath=noext} implies $\Ext^1(\cO_v,\cO_w)=0$. 
\end{proof}

\begin{proposition} \label{prop.noselfext}
Let $v$ be a cyclic vertex in a path algebra $kQ$ of finite non-zero GK-dimension. Then
$$
\Ext^1_{\QGr kQ}(\cO_v,\cO_v)=0.
$$
\end{proposition}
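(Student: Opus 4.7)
The plan is to mimic the proof of Proposition \ref{prop.nopath=noext}: first pass to the Veronese subalgebra $kQ^{(n)}$, where $n$ is the length of the simple cycle at $v$, which reduces to the case in which the simple cycle at $v$ is a loop. Then show that the relevant $\Hom$ groups coming from the projective resolution of $\cO_v$ already vanish in $\QGr kQ$, which forces $\Ext^1_{\QGr kQ}(\cO_v,\cO_v)=0$.

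Let $p=(v=v_0,a_1,v_1,\ldots,a_n,v_n=v)$ be the simple cycle at $v$. By Theorem \ref{thm.ABV} and Lemma \ref{lem.pointtopoint}, $\QGr kQ\equiv \QGr kQ^{(n)}$, and $\cO_v$ corresponds to the cyclic point module at $v$ in $\QGr kQ^{(n)}$. Because the Veronese preserves GK-dimension, $kQ^{(n)}$ also has finite GK-dimension, so $Q^{(n)}$ has no doubly cyclic vertex; in particular the only closed path at $v$ of length $n$ in $Q$ is $p$ itself, for otherwise Lemma \ref{lem.simpleclosedsubs} would produce a second simple cycle through $v$. Hence in $Q^{(n)}$ there is a unique loop at $v$, which I still call $p$. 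Relabel $Q^{(n)}$ as $Q$; the other arrows at $v$ are $b_1,\ldots,b_l$ with $u_i:=t(b_i)\neq v$, and $\cO_v$ is now supported entirely at the single vertex $v$. Applying $\Hom_{\QGr kQ}(-,\pi^*\cO_v)$ to the projective resolution
$$0\to \bigoplus_{m\geq 0,\,1\leq i\leq l}p^m b_i kQ\to e_v kQ\to \cO_v\to 0,$$
which remains a projective resolution in $\QGr kQ$ by \cite{Sm1}, yields a surjection
$$\prod_{m,i}\Hom_{\QGr kQ}\bigl(\pi^*(p^m b_i kQ),\pi^*\cO_v\bigr)\twoheadrightarrow \Ext^1_{\QGr kQ}(\cO_v,\cO_v).$$

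It therefore suffices to show every factor on the left vanishes. Since $p^m b_i kQ\cong e_{u_i}kQ(-(m+1))$, that factor is $\varinjlim_N \Hom_{\Gr kQ}(e_{u_i}kQ_{\geq N},\cO_v)$. A graded morphism from any submodule of $e_{u_i}kQ$ into $\cO_v$ must respect vertex supports, and $\cO_v$ lives only at $v$, so such a morphism is determined by its values on paths $q$ from $u_i$ to $v$. But no such $q$ can exist: if it did, then $b_i q$ would be a closed path at $v$ which is not a power of the loop $p$, and Lemma \ref{lem.simpleclosedsubs} would extract from it a simple cycle through $v$ distinct from $p$, contradicting the fact that $v$ is not doubly cyclic. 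Every such $\Hom$ is therefore zero, and the proposition follows. The only mildly delicate point is the preservation of the no-doubly-cyclic property under the Veronese, which is built into the finite GK-dimension hypothesis via Theorem \ref{thm.ufresult}.
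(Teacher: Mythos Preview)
Your proof is correct and follows essentially the same approach as the paper: reduce via the Veronese to the case of a loop at $v$, then use the projective resolution to see that $\Ext^1$ is a quotient of $\prod\Hom_{\QGr kQ}(\pi^*p^mb_ikQ,\cO_v)$ and show each factor vanishes. The paper phrases the last step slightly differently---it simply observes that every element of $p^mb_ikQ$ is annihilated by $e_v$ while no nonzero element of $\cO_v$ is---but this is exactly the support argument you give, and your appeal to Lemma~\ref{lem.simpleclosedsubs} makes explicit the reason (no path from $u_i$ back to $v$) that the paper leaves implicit.
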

\begin{proof}
If $n$ is the length of the simple cycle $p$ which contains $v$, then the only path of length $ln$ from $v$ to itself is $p^l$ $(p^0=e_v)$. By taking the $n$-th Veronese of $kQ$, we may assume the simple cycle which contains $v$ has length $1$, $p=(v,a,v)$. Hence, we have the situation
$$
\UseComputerModernTips
\xymatrix{ {v}\ar@<2ex>[r]^{b_1} \ar@<-2ex>[r]_{b_l}^{\vdots} \ar@(ul,dl)[]_{p} & {} }
$$ 
where the arrows $b_1,\ldots,b_l$ are all the arrows with source $v$ and $p$ is the loop at $v$. Using the exact sequence \ref{seq.extcomp} we know $\Ext^1(\cO_v,\cO_v)$ is a quotient of 
$$
\Hom(\bigoplus \pi^*p^mb_ikQ,\cO_v)\cong \prod \Hom_{\QGr kQ}(\pi^* p^mb_ikQ,\cO_v).
$$ 
Since every element of $p^mb_ikQ$ is annihilated by $e_v$ but every nonzero element of $\cO_v$ is not, there are no nonzero morphisms from any submodule of $p^mb_ikQ$ to $\cO_v$. Hence
$$
\Hom_{\QGr kQ}(\pi^*p^mb_ikQ,\cO_v)=\varinjlim \Hom_{\Gr kQ}(M,\cO_v)=0
$$ 
where the limit is over all $M\subset p^mb_ikQ$ whose cokernel is torsion. As this holds for all $(m,i)$, we get $\Hom_{\QGr kQ}(\bigoplus \pi^*p^mb_ikQ,\cO_v)=0$ which shows
$$
\Ext^1_{\QGr kQ}(\cO_v,\cO_v)=0.
$$
\end{proof}

\begin{Thm} \label{thm.extofpoints}
Let $kQ$ be a path algebra of finite non-zero GK-dimension. Let $v$ and $w$ be cyclic vertices which are in cycles of length $n$ and $m$ respectively. Then
$$
\Ext^1_{\QGr kQ}(\cO_v,\cO_w)\neq 0
$$
if and only if $v\neq w$ and there is a path from $v$ to $w$ of length $lnm$ for some $l\in \NN$. 
\end{Thm}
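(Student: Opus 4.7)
My plan is to handle the two implications separately.

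The ``only if'' direction is immediate from results already established. If $v = w$ then Proposition \ref{prop.noselfext} gives $\Ext^1_{\QGr kQ}(\cO_v,\cO_v) = 0$, and if no path from $v$ to $w$ has length a multiple of $nm$ then Proposition \ref{prop.nopath=noext} gives $\Ext^1_{\QGr kQ}(\cO_v,\cO_w) = 0$; the contrapositive yields this direction.

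For the substantive ``if'' direction, assume $v \neq w$ and fix a path $f$ from $v$ to $w$ of length $lnm$. I first pass to the Veronese $Q' := Q^{(lnm)}$ via Theorem \ref{thm.ABV}. A cyclic vertex in a quiver of finite GK-dimension is never doubly cyclic, so by (the argument in) Lemma \ref{lem.simpleclosedsubs} every closed path at $v$ lies inside the unique simple cycle $p$ at $v$; in particular the only closed path at $v$ of length $lnm$ is $p^{lm}$, giving $v$ a single loop $p'$ in $Q'$. Symmetrically $w$ acquires a single loop $q'$ in $Q'$, and $f$ becomes a single arrow $\alpha : v \to w$ in $Q'$. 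By Lemma \ref{lem.pointtopoint} the Veronese equivalence identifies cyclic point modules with their namesakes, so it suffices to produce a non-split extension of $\cO_v$ by $\cO_w$ in $\QGr kQ'$.

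The construction I intend to use: define $E \in \Gr kQ'$ with basis $\{x_j\}_{j \ge 0} \cup \{y_j\}_{j \ge 1}$, with $x_j$ in degree $j$ supported at $v$ and $y_j$ in degree $j$ supported at $w$, subject to $x_j \cdot p' = x_{j+1}$, $x_j \cdot \alpha = y_{j+1}$, $y_j \cdot q' = y_{j+1}$, and every other arrow annihilating these basis elements. The submodule generated by $\{y_j\}$ is isomorphic to $\cO_w(-1)$ and the quotient is $\cO_v$, yielding a short exact sequence $0 \to \cO_w(-1) \to E \to \cO_v \to 0$ in $\Gr kQ'$. Applying $\pi^*$ and invoking Corollary \ref{cor.shiftofpoint} (since the simple cycle at $w$ in $Q'$ is a loop, $\pi^*\cO_w(-1) \cong \pi^*\cO_w$) converts this into an extension with ends $\cO_v$ and $\cO_w$ in $\QGr kQ'$.

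The crux, which I expect to be the main obstacle, is showing this extension does not split in $\QGr kQ'$. Any section $\sigma \colon \cO_v \to \pi^* E$ is represented by a graded morphism $\tilde\sigma \colon (\cO_v)_{\ge n} \to E$ for some $n$, and the section condition forces $\tilde\sigma(p'^n) = x_n$. But $p'^n \cdot \alpha = 0$ in $\cO_v$ while $x_n \cdot \alpha = y_{n+1} \neq 0$ in $E$, a contradiction. Hence $\Ext^1_{\QGr kQ'}(\cO_v,\cO_w) \neq 0$, and transferring back through the Veronese equivalence gives $\Ext^1_{\QGr kQ}(\cO_v,\cO_w) \neq 0$.
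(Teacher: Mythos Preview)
Your argument is correct. The overall architecture matches the paper's: the ``only if'' direction is dispatched via Propositions \ref{prop.nopath=noext} and \ref{prop.noselfext}, and for the ``if'' direction both you and the paper pass to the Veronese $Q^{(lnm)}$ so that the cycles at $v$ and $w$ become loops and the given path becomes a single arrow, then build an explicit graded module sitting in a short exact sequence with ends $\cO_v$ and $\cO_w$.

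Where you diverge is in the non-splitting step. The paper constructs a whole family $N(\nu)$ indexed by $\nu \in k^{\NN}$, computes $\End_{\QGr kQ'}(\pi^*N(\nu)) \cong k$ whenever $\nu$ is not eventually zero, and deduces indecomposability (hence non-splitting); as a byproduct it obtains an embedding $k^{\NN}/\Fin \hookrightarrow \Ext^1_{\QGr kQ}(\cO_v,\cO_w)$, so $\Ext^1$ is in fact enormous. Your route builds a single such module (essentially the paper's $N((1,1,1,\ldots))$ up to a degree shift) and rules out a section directly via the observation that $p'^n\cdot\alpha = 0$ in $\cO_v$ while its image acts nontrivially under $\alpha$ in $E$. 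This is more elementary and entirely sufficient for the theorem as stated; the paper's approach costs more but delivers quantitative information you do not need here. One small imprecision: the section condition only forces $\tilde\sigma(p'^n) = x_n + b\,y_n$ for some $b\in k$, not $x_n$ exactly, but since $y_n\cdot\alpha = 0$ your contradiction survives unchanged.
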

\begin{proof}
See section \ref{sec.proofof8.5}
\end{proof}

Recall the vertices of the Ext-quiver are the cyclic vertices of $Q$ and there is an arrow $v\to w$ in $E_Q$ if and only if there is a path in $Q$ from $v$ to $w$ of length $lmn$ where $n$ (respectively $m$) is the length of the simple cycle that contains $v$ (respectively $w$). Hence, Theorem \ref{thm.extofpoints} can be rephrased to say that $\Ext^1_{\QGr kQ}(\cO_v,\cO_w)\neq 0$ if and only if there is an arrow from $v$ to $w$ in $E_Q$. This is precisely what Theorem \ref{thm.main2Q} says. Theorems \ref{thm.simpleispoint} and \ref{thm.extofpoints} show the Ext-quiver $E_Q$ is determined by the simple objects of $\QGr kQ$ and their extensions. Since equivalences preserve simple objects and extensions, it follows that $\QGr kQ \equiv \QGr kQ'$ implies $E_Q=E_{Q'}$. This establishes one direction of Theorem \ref{thm.main3Q}.

\subsubsection{Proof of Theorem \ref{thm.extofpoints}.} \label{sec.proofof8.5}
If $v=w$ or if there are no paths from $v$ to $w$ of length $lmn$ for some $l>0$ then $\Ext^1(\cO_v,\cO_w)=0$ by Propositions \ref{prop.nopath=noext} and \ref{prop.noselfext}.

Suppose $v\neq w$ and there is a path of length $lmn$ for some $l>0$. Let $r:v\to w$ be a path of length $lmn$ for some $l>0$. If we take the $lmn$-th Veronese subalgebra $kQ^{(lmn)}$, then $r$ is a path of length $1$ in the quiver $Q^{(lmn)}$ and the simple cycles based at $v$ and $w$ become loops. Due to the Veronese equivalence $\QGr kQ \equiv \QGr kQ^{(lmn)}$ we can assume there is an arrow $r:v\to w$ in $Q$ and the simple cycles which contain $v$ and $w$ are just loops. Let $p$ be the loop at $v$ and $q$ the loop at $w$.

Consider the graded representation $M=(M_u,M_a)$ where $M_v=k[t]$, with its usual grading, and $M_u=0$ for all $u\neq v$. The map $M_p:M_v\to M_v$ is multiplication by $t$ and all other arrows act trivially. The graded module that $M$ determines is a point module which is seen to be the cyclic point module $\cO_v$. There is a similar description of $\cO_w$ as a graded representation.

Let $\nu\in k^{\NN}$ and define $N=N(\nu)=(N_u,N_a)$ to be the graded representation determined by the following data:
\begin{itemize}
\item As graded vector spaces, $N_v=N_w=k[t]$ where $\deg(t)=1$,
\item $N_u=0$ for all other vertices $u$,
\item For the two loops $p$ and $q$, $N_p$ and $N_q$ are just multiplication by $t$, 
\item For the arrow $r:v \to w$, $N_r:k[t] \to k[t]$ is given by
$$
N_r(t^i)=\nu_it^{i+1}.
$$
\item All other arrows act trivially.
\end{itemize}

As a graded vector space $N(\nu)=k[t]\oplus k[t]$ where all of the trivial paths except $e_v$ and $e_w$ and all arrows other than $p,q,r$ act trivially. The trivial paths $e_v$ and $e_w$ and the arrows $p,q$ and $r$ act as described above, e.g, $(\alpha t^i,\beta t^i)e_v=(\alpha t^i,0)$, $(\alpha t^i,\beta t^i)r=(0,\alpha \nu_i t^{i+1})$, etc $\ldots$ 

For each $\nu$, the point module $\cO_w$ sits inside $N(\nu)$ as the submodule $(0,k[t])$. Moreover, the quotient $N(\nu)/\cO_w$ is just $\cO_v$. Hence, for each $\nu \in k^{\NN}$, there is an exact sequence
$$
\UseComputerModernTips
\xymatrix{ {0}\ar[r] & {\cO_w}\ar[r] & {N(\nu)}\ar[r] & {\cO_v}\ar[r] & {0} }
$$ 
in $\Gr kQ$ which is also exact when considered in $\QGr kQ$.

As before, let $\{p,b_1=r,\ldots,b_l\}$ be all arrows whose source is $v$ where $p$ is the loop at $v$ and $b_1=r$. Using the projective resolution for $\cO_v$ and the exact sequence involving $N(\nu)$, we can construct the following commutative diagram
$$
\UseComputerModernTips
\xymatrix{ {0}\ar[r] & {\oplus p^mb_ikQ}\ar[r] \ar[d]_{\alpha_1} & {e_vkQ}\ar[r] \ar[d]^{\alpha_0} & {\cO_v}\ar[r] \ar[d]^{=} & {0} \\
           {0}\ar[r] & {\cO_w}\ar[r] & {N(\nu)}\ar[r] & {\cO_v}\ar[r] & {0} }
$$
where the maps $\alpha_0=\alpha_0(\nu)$ and $\alpha_1=\alpha_1(\nu)$ exist by the projectivity of $e_vkQ$ and $\oplus p^mb_ikQ$.

The map $\alpha_0$ is completely determined by $\alpha_0(e_v)$. If $\alpha_0(e_v)=(a,b)\in N(\nu)_{0}$, then $(a,b)=\alpha_0(e_v)=\alpha_0(e_v)e_v=(a,b)e_v=(a,0)$ shows $b=0$. Also, since the right square in the diagram above must commute we determine $a=1$, that is, $\alpha_0(e_v)=(1,0)$.  

Any non-trivial path which starts at $v$ has the form $p^m$ or $p^mb_ip'$ where $p'$ is any path whose source is $t(b_i)$. Since all arrows other than $p,q$ and $r$ annihilate $N(\nu)$, the only paths starting at $v$ which are not necessarily in the kernel of $\alpha_0$ are those of the form $p^i$ and $p^irq^j$ for $i,j\geq 0$. However, for paths of the form $p^i$ and $p^irq^j$;
\begin{eqnarray*}
\alpha_0(p^i)&=&\alpha_0(e_v)p^i=(1,0)p^i=(t^i,0), \\
\alpha_0(p^irq^j)&=&(1,0)p^irq^j=(t^i,0)rq^n=(0,\nu_it^{i+1})q^j=(0,\nu_it^{i+j+1}).
\end{eqnarray*} 

The map $\alpha_1$ is just the restriction of $\alpha_0$ to the submodule $\oplus p^mb_ikQ$. It is completely determined by what happens on the paths $p^mb_i$;
\begin{itemize}
\item $\alpha_1(p^mb_i)=0$ for $i>1$, 
\item $\alpha_1(p^mb_1)=\alpha_1(p^mr)=\nu_mt^{m+1}$.
\end{itemize}

The map $\alpha_1$ determines an element in $\Ext^1_{\QGr kQ}(\cO_v,\cO_w)$(computed using projective resolutions). Hence, we have a map 
$$
\Phi: k^{\NN} \to \Ext^1_{\Gr kQ}(\cO_v,\cO_w)
$$ 
which sends the element $\nu$ to $\alpha_1(\nu)$. If $a$ is any scalar in $k$ and $\nu \in k^{\NN}$, then the map $\alpha_1(a\nu)$ is the map determined by $\alpha_1(a\nu)(p^mr)=a\nu_mt^{m+1}$. Hence, $\alpha_1(a\nu)=a\alpha_1(\nu)$. Also, if $\nu,\mu \in k^{\NN}$, then $\alpha_1(\nu+\mu)(p^mr)=(\nu_m+\mu_m)t^{m+1}=(\alpha_1(\nu)+\alpha_1(\mu))(p^mr)$. Hence, the map $\Phi$ is a linear map.

Since the $k$-linear exact functor $\pi^*$ induces a map of vector spaces 
$$
\Ext^1_{\Gr kQ}(\cO_v,\cO_w) \to \Ext^1_{\QGr kQ}(\cO_v,\cO_w),
$$ 
we have a linear map from $\Phi^*:k^{\NN} \to \Ext^1_{\QGr kQ}(\cO_v,\cO_w)$. It will be shown that the kernel of $\Phi^*$ is all infinite sequences which are eventually zero.

Notice that for each $\nu$, the graded module $N(\nu)$ is torsion free and finitely generated. Since $N(\nu)$ is finitely generated, if $N'$ is a graded submodule of $N(\nu)$ such that $N(\nu)/N'$ is torsion, then $N(\nu)/N'$ must be finite dimensional. Hence, $N'$ must contain $N(\nu)_{\geq n}$ for some $n\in \NN$.

Let $\nu$ and $\mu$ be elements of $k^{\NN}$. Since every submodule $N'$ of $N(\nu)$ which has a torsion cokernel must contain $N(\nu)_{\geq n}$ for some $n\in \NN$, and $N(\mu)$ is torsion free, it follows that every morphism $f:\pi^*N(\nu) \to \pi^*N(\mu)$ is represented by a graded module morphism $\phi:N(\nu)_{\geq n}\to N(\mu)$ for some $n$.

Let $\phi:N(\nu)_{\geq n} \to N(\mu)$ be a morphism of graded modules. Then $\phi$ is determined by $\phi_{v}:k[t]_{\geq n} \to k[t]$ and $\phi_{w}:k[t]_{\geq n}\to k[t]$ such that 
\begin{itemize}
\item $\phi_v \circ N(\nu)_p=N(\mu)_p \circ \phi_v$,
\item $\phi_w\circ N(\nu)_q=N(\mu)_q \circ \phi_w$,
\item $N(\mu)_r\circ \phi_v=\phi_w\circ N(\nu)_r.$
\end{itemize}

As $N(\nu)_p$, $N(\nu)_q$, $N(\mu)_p$ and $N(\mu)_q$ are all just multiplication by $t$, the first two bullets above just say $\phi_v$ and $\phi_w$ are graded $k[t]$-module homomorphisms. As every graded $k[t]$-module homomorphism from $k[t]_{\geq n}\to k[t]$ is just multiplication by a scalar, we get a pair $(\alpha,\beta)\in k\times k$ such that $\phi_v(t^i)=\alpha t^i$ and $\phi_w(t^i)=\beta t^i$ for all $i\geq n$.

The last bullet point above indicates that for all $i\geq n$,
$$
\alpha \mu_it^{i+1}=N(\mu)_r\circ \phi_v(t^i)=\phi_w\circ N(\nu)_r(t^i)=\beta \nu_i t^{i+1}.
$$
Hence, $\alpha \mu_i=\beta \nu_i$ for all $i\geq n$.

Conversely, if there is a pair $(\alpha,\beta)$ such that $\alpha \mu_i=\beta \nu_i$ for all $i\geq n$, then we have a graded module morphism $\phi:N(\nu)_{\geq n} \to N(\mu)$ and hence a morphism $f:\pi^*N(\nu) \to \pi^*N(\mu)$.

Suppose $f:\pi^*N(\nu)\to \pi^*N(\mu)$ and $g:\pi^*N(\mu) \to \pi^*N(\lambda)$ are represented by pairs $(\alpha,\beta)$ and $(\gamma,\delta)$ respectively. Then the composite $g\circ f:\pi^*N(\nu) \to \pi^*N(\lambda)$ is represented by the pair $(\gamma \alpha, \delta \beta)$. Also, if $f,g:\pi^*N(\nu) \to \pi^*N(\mu)$ are represented by the pairs $(\alpha,\beta)$ and $(\gamma,\delta)$, then $f+g$ is represented by $(\alpha+\gamma,\beta+\delta)$. Hence, there is an isomorphism
$$
\Hom_{\QGr kQ}(\pi^*N(\nu),\pi^*N(\mu)) = \{(\alpha,\beta)\in k\times k\;|\; \alpha \mu_i=\beta \nu_i \text{ for }i\gg 0\}
$$
under which composition of maps is given by multiplication of pairs.

Suppose $\nu$ is a sequence such that for all $n\in \NN$, there exists a $m\geq n$ such that $\nu_{m}\neq 0$. Let $(\alpha,\beta)$ represent a morphism $f:\pi^*N(\nu) \to \pi^*N(\nu)$. Since $\beta \nu_i=\alpha \nu_i$ for all $i\gg 0$, and since there are arbitrarily large $i$ for which $\nu_i\neq 0$, we get $\alpha=\beta$. Hence, there is an isomorphism
$$
\End_{\QGr kQ}(\pi^*N(\nu)) \cong k
$$
which respects multiplication.

Again, let $\nu$ be an infinite sequence which is not eventually the zero sequence. Since $\End(\pi^*N(\nu))=k$ has no nontrivial idempotents, the object $\pi^*N(\nu)$ is indecomposable. Thus, $\pi^*N(\nu)$ is a nontrivial extension of $\cO_w$ by $\cO_v$. This shows that any sequence $\nu$ which is not eventually the zero sequence is not in the kernel of $\Phi^*$. Therefore, 
$$
\Ext^1_{\QGr kQ}(\cO_v,\cO_w)\neq 0
$$
and Theorem \ref{thm.extofpoints} is proved. 

Suppose $\nu$ is a sequence which is eventually zero. Find $n$ such that $\nu_i=0$ for all $i\geq n$. Consider the graded subspace $\{(at^i,0)\in N(\nu)\;|\; a\in k, i\geq n\}$. Since $(at^i,0)r=(0,0)$, this is a graded submodule of $N(\nu)$ which is isomorphic to $(\cO_v)_{\geq n}$. Hence, we have a map $\phi:(\cO_v)_{\geq n} \to N(\nu)$ such that the composition 
$$
\UseComputerModernTips
\xymatrix{ {(\cO_v)_{\geq n}}\ar[r]^{\phi} & {N(\nu)}\ar[r] & {\cO_v} }
$$ 
is the inclusion. Thus, the map $\phi:(\cO_v)_{\geq n} \to N(\nu)$ determines a map $\cO_v \to \pi^*N(\nu)$ such that the composition
$$
\UseComputerModernTips
\xymatrix{ {\cO_v}\ar[r] & {\pi^*N(\nu)}\ar[r] & {\cO_v} }
$$
is the identity. Therefore, the exact sequence
$$
\UseComputerModernTips
\xymatrix{ {0}\ar[r] & {\cO_w}\ar[r] & {\pi^*N(\nu)}\ar[r] & {\cO_v}\ar[r] & {0} }
$$
splits which shows the map $k^{\NN}\to \Ext^1_{\QGr kQ}(\cO_v,\cO_w)$ sends $\nu$ to $0$. Hence, the kernel of $\Phi^*$ is the subspace $\Fin$ of all infinite sequences which are eventually zero and we get a vector space embedding
$$
k^{\NN}/\Fin \hookrightarrow \Ext^1_{\QGr kQ}(\cO_v,\cO_w).
$$

\section{Properties of the Ext-quiver $E_Q$.} \label{sec.extposet}

As before, $kQ$ is assumed to have finite GK-dimension. Recall the set $\mC(Q)$, which consists of all the simple cycles in $Q$, is a poset. The relation being $C_1\preceq C_2$, for simple cycles $C_1$ and $C_2$, if there is a chain from a vertex of $C_1$ to a vertex $C_2$. The Growth of $kQ$ is then polynomial of degree $d$ where $d$ is the maximal size of a totally ordered subset of $\mC(Q)$.  

It was mentioned in the introduction that the Ext-quiver $E_Q$ can be thought of as a poset whose elements are the cyclic vertices of $Q$ and where $u\preceq v$ if either $u=v$ or there is an arrow $u\to v$ in $E_Q$. Equivalently, we can define $u\prec v$ if $\Ext^1_{\QGr kQ}(\cO_u,\cO_v)\neq 0$. The next proposition justifies the claim that $E_Q$ is a poset.

\begin{proposition} \label{prop.simpleposet}
$(E_Q,\preceq)$ is a poset.
\end{proposition}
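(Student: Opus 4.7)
The plan is to verify the three poset axioms---reflexivity, antisymmetry, and transitivity---for the relation $\preceq$ on the set of cyclic vertices of $Q$. Reflexivity is immediate from the definition, which stipulates $u \preceq u$ unconditionally.

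For antisymmetry, I would suppose $u$ and $v$ are distinct cyclic vertices with both $u \preceq v$ and $v \preceq u$ and derive a contradiction. By Theorem \ref{thm.extofpoints}, this gives positive-length paths $u \to v$ and $v \to u$ in $Q$ whose lengths are divisible by $n_u n_v$ (with $n_u, n_v$ the lengths of the simple cycles through $u$ and $v$). I would then split into two cases according to whether $u$ and $v$ lie on the same simple cycle. If they do, Corollary \ref{cor.nopath=noext} immediately gives $\Ext^1_{\QGr kQ}(\cO_u, \cO_v) = 0$, contradicting $u \prec v$. Otherwise the simple cycles $C_u$ and $C_v$ through $u$ and $v$ are distinct, and the two paths, trimmed to simple chains, witness both $C_u \preceq C_v$ and $C_v \preceq C_u$ in the preorder $\mC(Q)$; by the preliminary lemma of Section 4, this forces $Q$ to have a doubly cyclic vertex, contradicting the finite GK-dimension of $kQ$ via Theorem \ref{thm.ufresult}.

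Transitivity is the main technical step. Given $u \preceq v$ and $v \preceq w$, I first dispose of the trivial degenerate cases in which two of $u, v, w$ coincide (handled by reflexivity and the hypotheses); then I may assume $u, v, w$ are pairwise distinct and fix paths $\gamma_1 : u \to v$ of length $l_1 n_u n_v$ and $\gamma_2 : v \to w$ of length $l_2 n_v n_w$ with $l_1, l_2 \geq 1$. The naive concatenation $\gamma_1 \gamma_2$ has length $L := l_1 n_u n_v + l_2 n_v n_w$, which is not a priori divisible by $n_u n_w$. My plan is to correct this by prepending $a$ copies of the simple cycle $p_u$ at $u$ and appending $b$ copies of the simple cycle $p_w$ at $w$, producing a path $u \to w$ of length $L + a n_u + b n_w$. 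Setting $d := \gcd(n_u, n_w)$, the quantity $L$ is divisible by $d$, so Bezout's identity produces integer solutions $(a, b)$ of the congruence $a n_u + b n_w \equiv -L \pmod{n_u n_w}$; since adding $n_w$ to $a$ or $n_u$ to $b$ preserves the residue class modulo $n_u n_w$, one can push any such solution into the nonnegative orthant. The resulting path has positive length and is divisible by $n_u n_w$, so Theorem \ref{thm.extofpoints} yields $u \prec w$ in $E_Q$.

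The hard part will be transitivity: antisymmetry is essentially a repackaging of the poset structure of $\mC(Q)$ together with results already in place (Corollary \ref{cor.nopath=noext} and Theorem \ref{thm.ufresult}), whereas transitivity demands the explicit construction of a new path of the correct length, hinging on the number-theoretic observation that every residue class in $d\ZZ/(n_u n_w \ZZ)$ is realized by nonnegative $\NN$-combinations of $n_u$ and $n_w$.
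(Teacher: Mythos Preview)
Your proposal is correct and follows essentially the same approach as the paper: reflexivity by definition, antisymmetry by reducing to the absence of doubly cyclic vertices (the paper phrases the contradiction as ``a cycle containing both $v$ and $w$'' rather than going through $\mC(Q)$, but it is the same reduction), and transitivity by concatenating the two given paths and padding with copies of the loops $p_u$ and $p_w$ to make the length divisible by $n_u n_w$. The only minor difference is that for transitivity the paper writes down explicit values $\alpha=\beta n_w - l_1 n_v$ and $\gamma=\delta n_u - l_2 n_v$ (with $\beta,\delta$ large enough) which make the total length exactly $(\beta+\delta)n_u n_w$, whereas you appeal to B\'ezout to solve $a n_u + b n_w \equiv -L \pmod{n_u n_w}$ and then shift into $\NN^2$; both arguments are valid and amount to the same construction.
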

\begin{proof}
By definition $v\preceq v$. Suppose $v\preceq w$ and $w\preceq v$. If $v\neq w$, the vector spaces $\Ext^1(\cO_v,\cO_w)$ and $\Ext^1(\cO_w,\cO_v)$ are both non-zero. By corollary \ref{cor.nopath=noext} we know $v$ and $w$ cannot be in the same simple cycle. However, by Theorem \ref{thm.extofpoints} we know there must be a path from $v$ to $w$ and a path from $w$ to $v$. This is a contradiction however as this implies the existence of a cycle which contains both $v$ and $w$. Hence, $v=w$ must hold.

Suppose $u \preceq v$ and $v \preceq w$. If either $u=v$, $v=w$ or $u=w$ then $u\preceq w$. Suppose $u,v,w$ are all distinct cyclic vertices. Necessarily, all three vertices must be in distinct cycles. Let $n_u,n_v$ and $n_w$ be the lengths of the simple cycles, $p_u,p_v$ and $p_w$ which contain $u,v$ and $w$ respectively. Since $\Ext^1(\cO_u,\cO_v)$ and $\Ext^1(\cO_v,\cO_w)$ are not zero, there must be a path $q$ from $u$ to $v$ of length $ln_un_v$ for some $l\in \NN$ and a path $q'$ from $v$ to $w$ of length $l'n_vn_w$ for some $l'$. For any non-negative integers $\alpha$ and $\gamma$, $p_u^\alpha q$ is a path from $u$ to $v$ of length $ln_un_v+\alpha n_u$ and $q'p_w^{\gamma}$ is a path from $v$ to $w$ of length $l'n_vn_w+\gamma n_w$.

Pick $\beta \in \NN$ such that $\beta n_w\geq ln_v$ and let $\alpha=\beta n_w-ln_v$. Similarly, find $\delta \in \NN$ such that $\delta n_u \geq l'n_v$ and let $\gamma=\delta n_u-l'n_v$. Then $p_u^{\alpha}qq'p_w^{\gamma}$ is a path from $u$ to $w$ of length
$$
\alpha n_u+ln_un_v+l'n_vn_w+\gamma n_w=(\alpha+ln_v)n_u+(\gamma+l'n_v)n_w=(\beta+\delta)n_un_w.
$$ 
Hence, there is a path from $u$ to $w$ of length which is a multiple of $n_un_w$. Therefore, there is an arrow $u\to w$ in $E_Q$ which is to say $u\prec w$.
\end{proof} 

\begin{proposition} \label{prop.simposet}
Let $kQ$ be a path algebra of finite GK-dimension with $\mC(Q)$ and $E_Q$ the posets introduced previously. There is a totally ordered subset in $\mC(Q)$ of length $d$ if and only if there is a totally ordered subset of length $d$ in $E_Q$.
\end{proposition}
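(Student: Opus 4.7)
The plan is to prove both implications directly by associating to each cyclic vertex its unique simple cycle. For the easy direction, suppose $v_1 \prec v_2 \prec \cdots \prec v_d$ is a chain in $E_Q$ and let $C_i$ denote the simple cycle containing $v_i$. Corollary \ref{cor.nopath=noext} forces the $C_i$ to be pairwise distinct, because two distinct vertices of a common simple cycle are incomparable in $E_Q$. For each $i$, the arrow $v_i \to v_{i+1}$ in $E_Q$ supplies an actual path in $Q$ from $v_i \in C_i$ to $v_{i+1} \in C_{i+1}$, and iteratively excising any loop at a repeated intermediate vertex shortens this to a simple chain from $v_i$ to $v_{i+1}$, witnessing $C_i \prec C_{i+1}$ in $\mC(Q)$.

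For the reverse direction, suppose $C_1 \prec C_2 \prec \cdots \prec C_d$ is a chain in $\mC(Q)$ with $n_i := |C_i|$. I will construct cyclic vertices $v_i \in C_i$ with $v_i \prec v_{i+1}$ in $E_Q$ by reverse induction on $i$: pick $v_d \in C_d$ arbitrarily, and at each step select $v_i \in C_i$ so that some path from $v_i$ to $v_{i+1}$ has length a positive multiple of $n_i n_{i+1}$. The condition $v_i \prec v_{i+1}$ only constrains the adjacent pair, so the induction proceeds without backtracking.

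The heart of the proof, and the main obstacle, is the inductive step. Fix a simple chain from some $u_i \in C_i$ to some $u_{i+1}' \in C_{i+1}$ of length $k \geq 1$ witnessing $C_i \prec C_{i+1}$. For a candidate $v_i \in C_i$ and the chosen $v_{i+1} \in C_{i+1}$, the paths from $v_i$ to $v_{i+1}$ which first travel along $C_i$ from $v_i$ to $u_i$, then follow the chain to $u_{i+1}'$, and finally travel along $C_{i+1}$ from $u_{i+1}'$ to $v_{i+1}$ have lengths precisely
\[
L_0 + \alpha n_i + \beta n_{i+1}, \qquad L_0 := r_i + k + r_{i+1}', \qquad \alpha,\beta \in \NN,
\]
where $r_i \in \{0,\ldots,n_i-1\}$ and $r_{i+1}' \in \{0,\ldots,n_{i+1}-1\}$ are the distances along the respective cycles. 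Since the subgroup of $\ZZ$ generated by $n_i$ and $n_{i+1}$ is $\gcd(n_i,n_{i+1})\ZZ$, a value of this form can be arranged to be a positive multiple of $n_i n_{i+1}$ for some $\alpha,\beta \in \NN$ if and only if $\gcd(n_i,n_{i+1}) \mid L_0$. As $r_i$ runs over every residue modulo $n_i$ when $v_i$ varies over $C_i$, it runs in particular over every residue modulo $\gcd(n_i,n_{i+1})$; so $v_i$ can be chosen to force $\gcd(n_i,n_{i+1}) \mid L_0$, after which taking $\alpha,\beta$ sufficiently large completes the inductive step and hence the proof.
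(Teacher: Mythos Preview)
Your proof is correct and follows essentially the same approach as the paper's: both directions proceed by associating to each cyclic vertex its unique simple cycle, and the harder direction is handled by an induction along the chain in $\mC(Q)$ using the freedom to wind around the cycles to adjust path lengths. The only cosmetic differences are that the paper runs the induction forward (fixing the source vertex and choosing the target) while you run it in reverse, and the paper writes down the required path explicitly whereas you phrase the length condition via divisibility by $\gcd(n_i,n_{i+1})$.
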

\begin{proof}
Let $v_1\prec \cdots \prec v_d$ be a chain in $E_Q$. Each of the cyclic vertices $v_i$ appear in a cycle $C_i$. Let $n_i$ be the length of the cycle $C_i$. Since $v_i\neq v_j$ but $v_i\preceq v_j$ for $i<j$ we know $\Ext^1(\cO_{v_i},\cO_{v_j})\neq 0$. In particular, the simple cycles $C_i$ and $C_j$ must be different for all pairs $(i,j)$. As $\Ext^1(\cO_{v_i},\cO_{v_{i+1}})\neq 0$, there must be a path from $v_i$ to $v_{i+1}$. Hence, there is a path from a vertex of $C_i$ to a vertex of $C_{i+1}$ so necessarily a chain from a vertex of $C_i$ to a vertex of $C_{i+1}$. Hence, $C_1\preceq \cdots \preceq C_d$ in $\mC(Q)$. 

Let $C_1\prec C_2$ be two simple cycles of lengths $n$ and $m$ respectively. Write $C_1=Q(p_1)$ where $p_1=(v_0,a_1,\ldots,a_n,v_n)$ and $C_2=Q(p_2)$ where $p_2=(w_0,b_1,\ldots,b_m,w_m)$. There is a chain $p$ from a vertex in $C_1$ to a vertex in $C_2$. We may assume the chain $p$ starts at $v_0$ and ends at $w_0$. Let $l$ be the length of $p$. Pick any cyclic vertex $v_i$ in $C_1$, then the path $a_i\cdots a_np$ is a path from $v_i$ to $w_0$ which has length $l+n-i$. Find $\alpha \in \NN$ such that $\alpha nm\geq l+n-i$ and let $\beta=\alpha nm-(l+n-i)$. Write $\beta=l'm+j$ with $j<\beta$. Then $a_i\cdots a_npp_2^{l'}b_1\cdots b_j$ is a path from $v_i$ to $w_j$ of length 
$$
l+n-i+l'm+j=l+n-i+\beta=\alpha nm.
$$
Hence, $\Ext^1(\cO_{v_i},\cO_{w_j})\neq 0$ and we have $v_i\prec w_j$.

Now suppose we have a chain $C_1\prec \cdots<C_{d-1}\prec C_d$ with $n_i$ the length of $C_i$. By induction, we can find vertices $v_i$ in $C_i$ for $i<d$ such that $v_1\prec \cdots \prec v_{d-1}$. Now consider the vertex $v_{d-1}$ of $C_{d-1}$. Using the same argument as above we can find a vertex $v_d$ in $C_d$ and a path from $v_{d-1}$ to $v_d$ of length $ln_{d-1}n_d$ for some $l$. Hence, $v_{d-1}\prec v_d$ and we have 
$$
v_1\prec \cdots \prec v_d
$$
in $E_Q$.
\end{proof}

\begin{corollary} \label{cor.GKinposet}
The GK-dimension of $kQ$ equals the maximal length of a totally ordered subset in $E_Q$.
\end{corollary}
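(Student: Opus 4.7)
The plan is to simply chain together two results already established in the excerpt. By Theorem \ref{thm.ufresult}, since $kQ$ has finite GK-dimension, its GK-dimension equals the cardinality of a largest totally ordered subset of the poset $\mC(Q)$ of simple cycles. By Proposition \ref{prop.simposet}, $\mC(Q)$ contains a totally ordered subset of length $d$ if and only if $E_Q$ does. Taking $d$ to be the GK-dimension, and then taking $d$ to be the maximal chain length in $E_Q$, we get inequalities in both directions between these two integers, hence equality.

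Concretely, I would write: let $d = \GKdim kQ$ and let $e$ be the maximal length of a totally ordered subset of $E_Q$. By Theorem \ref{thm.ufresult}, there is a totally ordered subset of $\mC(Q)$ of length $d$, so by Proposition \ref{prop.simposet} there is a totally ordered subset of $E_Q$ of length $d$, giving $d \le e$. Conversely, from a totally ordered subset of $E_Q$ of length $e$, the other direction of Proposition \ref{prop.simposet} produces a totally ordered subset of $\mC(Q)$ of length $e$, and then Theorem \ref{thm.ufresult} gives $e \le d$.

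There is no real obstacle here; the corollary is a direct combinatorial translation once both Theorem \ref{thm.ufresult} and Proposition \ref{prop.simposet} are in hand. The only minor care point is that Proposition \ref{prop.simposet} is stated as an ``if and only if'' for a specific $d$, so one should apply it once in each direction rather than try to match maxima directly; this is entirely formal.
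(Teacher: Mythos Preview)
Your proposal is correct and follows essentially the same approach as the paper: the paper's proof simply says the corollary follows from Proposition \ref{prop.simposet} together with the fact (Theorem \ref{thm.ufresult}) that $\GKdim kQ$ equals the maximal length of a totally ordered subset of $\mC(Q)$. Your version is slightly more explicit about applying the biconditional in each direction, but the argument is the same.
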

\begin{proof}
This follows from the previous proposition since the GK-dimension of $kQ$ is the maximal length of a totally ordered subset in $\mC(Q)$. 
\end{proof}

\begin{Cor}
Suppose $Q$ and $Q'$ are quivers such that $\QGr kQ\equiv \QGr kQ'$. Then $\GKdim kQ=\GKdim kQ'$.
\end{Cor}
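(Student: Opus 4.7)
The plan is to combine Corollary \ref{cor.GKinposet} with the forward direction of Theorem \ref{thm.main3Q} that was already established in the remarks following Theorem \ref{thm.extofpoints}. Implicitly both $kQ$ and $kQ'$ should be taken to have finite GK-dimension, since otherwise the Ext-quivers $E_Q,E_{Q'}$ are not even defined; in the remaining case one interprets the conclusion as $\infty=\infty$.

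First I would invoke the fact that any equivalence $\QGr kQ \equiv \QGr kQ'$ of $k$-linear abelian categories carries isomorphism classes of simple objects to one another and preserves all $\Ext^1$ groups between such simples. By Theorem \ref{thm.simpleispoint} the simple objects of $\QGr kQ$ are precisely the cyclic point modules $\cO_v$ indexed by the cyclic vertices of $Q$, so the vertex sets of $E_Q$ and $E_{Q'}$ correspond under the equivalence. By Theorem \ref{thm.extofpoints} the arrows of $E_Q$ are read off exactly from the nonvanishing pattern of $\Ext^1_{\QGr kQ}(\cO_v,\cO_w)$, so the edge sets also match. Hence $E_Q$ and $E_{Q'}$ are isomorphic as quivers, and by Proposition \ref{prop.simpleposet} as posets.

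To finish, I would apply Corollary \ref{cor.GKinposet} to each side, which identifies
\[
\GKdim kQ \;=\; \max\{\,|C|\mid C\subseteq E_Q \text{ totally ordered}\,\},
\]
and likewise for $kQ'$ with $E_{Q'}$; the poset isomorphism from the previous step preserves this invariant trivially. There is no real obstacle here — the corollary is essentially a two-line citation of Corollary \ref{cor.GKinposet} and the already-proved $\QGr kQ\equiv \QGr kQ' \Rightarrow E_Q=E_{Q'}$ direction of Theorem \ref{thm.main3Q} — and the only delicate point is the implicit finite GK-dimension assumption, which is consistent with the section in which this corollary appears.
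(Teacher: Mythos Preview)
Your argument in the finite GK-dimension case is correct and matches the paper's approach exactly: invoke $E_Q\cong E_{Q'}$ via the remarks after Theorem~\ref{thm.extofpoints}, then apply Corollary~\ref{cor.GKinposet}.

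The gap is in your treatment of the infinite case. The corollary as stated does \emph{not} assume finite GK-dimension, and the paper's proof explicitly handles this. You write that ``in the remaining case one interprets the conclusion as $\infty=\infty$,'' but this presupposes that if one of $kQ,kQ'$ has infinite GK-dimension then so does the other --- and that is exactly the nontrivial point. The paper supplies the missing step: if $kQ$ has infinite GK-dimension then $\QGr kQ$ has infinitely many isomorphism classes of simple objects; an equivalence transports this to $\QGr kQ'$, and then Theorem~\ref{thm.simpleispoint} (which says a finite-GK path algebra has only finitely many simples in $\QGr$) forces $kQ'$ to have infinite GK-dimension as well. Without this argument the mixed case (one finite, one infinite) is not excluded, so your reading of an ``implicit finite GK-dimension assumption'' is not what the paper intends.
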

\begin{proof}
If $kQ$ has infinite GK-dimension it can be shown $\QGr kQ$ has infinitely many isomorphism classes of simple objects. Hence, $\QGr kQ'$ has infinitely many simple objects which implies $kQ'$ has infinite GK-dimension. Therefore, if one algebra has infinite GK-dimension then so does the other. 

If $kQ$ has finite GK-dimension then so must $kQ'$. Since $\QGr kQ\equiv \QGr kQ'$ implies $E_Q\cong E_{Q'}$, $\GKdim kQ=\GKdim kQ'$ by Corollary \ref{cor.GKinposet}.
\end{proof}

\section{The Grothendieck group of $\QGr kQ$.} \label{sec.grothgroup2} 
\subsection{}
The remarks following Theorem \ref{thm.extofpoints} established one direction of Theorem \ref{thm.main3Q}; that is $\QGr kQ \equiv \QGr kQ'$ implies $E_Q=E_{Q'}$. To establish the other direction, it will be shown that the Grothendieck group of the category $\QGr kQ$, computed using finitely generated projectives, is completely determined by the Ext-quiver $E_Q$. This is useful for the following reason:

In \cite{Sm1}, Smith showed that for any path algebra $kQ$ (no assumption on GK-dimension), there is an ultramatricial algebra $S(Q)$ and an equivalence of categories
$$
\QGr kQ \equiv \Mod S(Q)
$$ 
where $\Mod S(Q)$ is the category of right $S(Q)$-modules. The relationship between ultramatricial algebras and their Grothendieck groups is especially strong in the following sense:

\begin{Thm}[\cite{Good}, Cor. 15.27 page 222] \label{thm.goodcite}
Let $R$ and $S$ be ultramatricial algebras. Then $\Mod R \equiv \Mod S$ if and only if $\K0(R) \cong \K0(S)$ as pre-ordered abelian groups.
\end{Thm}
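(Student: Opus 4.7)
The plan is to split into the two directions, treating the forward implication as routine and the reverse as the real content (which is Elliott's classification of ultramatricial algebras by their pre-ordered $\K0$).

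For the forward direction, suppose $F\colon\Mod R\to \Mod S$ is a $k$-linear equivalence. Then $F$ restricts to an equivalence between the categories of finitely generated projective right $R$- and $S$-modules, because ``finitely generated projective'' can be characterised categorically (compact and a direct summand of a direct sum of copies of the regular module, or equivalently a retract of a finite direct sum of $R$ in $\Mod R$). Since $\K0$ is defined as the Grothendieck group of the symmetric monoidal category of finitely generated projectives under $\oplus$, $F$ induces an isomorphism $F_*\colon \K0(R)\to \K0(S)$. The pre-order on $\K0(R)$ is by definition the submonoid of classes $[P]$ with $P$ finitely generated projective; since $F$ preserves this class of objects, $F_*$ carries the positive cone into the positive cone, and the same holds for $F^{-1}$, so $F_*$ is an iso of pre-ordered abelian groups.

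For the reverse direction, write $R=\varinjlim R_n$ and $S=\varinjlim S_m$ with each $R_n,S_m$ a finite direct sum of matrix algebras $\bigoplus_i M_{n_i}(k)$. Each $R_n$ is semisimple with $\K0(R_n)\cong \ZZ^{r_n}$ equipped with the standard positive cone, and an algebra map $R_n\to R_{n'}$ is determined up to conjugation by its multiplicity matrix, which is exactly the induced positive map $\ZZ^{r_n}\to\ZZ^{r_{n'}}$ on $\K0$ (well-defined up to inner automorphism, i.e.\ Morita-stabilisation on each summand). Thus $\K0(R)$ is the direct limit of the $\ZZ^{r_n}$'s as a pre-ordered group. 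Given an iso $\varphi\colon \K0(R)\to \K0(S)$ of pre-ordered groups, one runs a back-and-forth (intertwining) argument: for each $n$, find an $m$ and a positive map $\ZZ^{r_n}\to \ZZ^{s_m}$ that agrees with $\varphi$ on the image of $\K0(R_n)$, and symmetrically; telescoping these and lifting each positive map to a (unital, up to corner) algebra homomorphism between matching matrix-sum subalgebras, one obtains mutually inverse algebra homomorphisms in the limit, giving $R\cong S$ (or at least a Morita equivalence, which is enough to conclude $\Mod R\equiv \Mod S$).

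The main obstacle is the intertwining step: given only the isomorphism of direct limits, one must simultaneously choose indices $m=m(n)$ and algebra maps that are compatible with the already-chosen maps in the other direction, so that the resulting diagram of finite-dimensional semisimple algebras genuinely commutes in the colimit. This is the heart of Elliott's theorem, and it relies on the fact that at the finite stage a positive group map $\ZZ^{r}\to\ZZ^{s}$ is always realised by an actual algebra homomorphism (up to inner automorphism and stabilisation), so one never gets stuck trying to lift the group-theoretic data to ring-theoretic data. Once this is organised properly the rest of the proof is bookkeeping.
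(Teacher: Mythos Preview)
The paper does not give its own proof of this statement: it is quoted verbatim as \cite[Cor.~15.27]{Good} and used as a black box. So there is nothing in the paper to compare your argument against.

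That said, your outline is the standard one. The forward direction is exactly as you say. For the converse you are sketching Elliott's intertwining argument, which is indeed the content of the cited result in Goodearl. Two small comments: first, your hedge ``$R\cong S$ (or at least a Morita equivalence)'' is the right instinct---without assuming the isomorphism sends $[R]$ to $[S]$ you only get Morita equivalence, and that is precisely what the statement asks for (the paper later quotes the sharper isomorphism version, \cite[Thm.~15.26]{Good}, separately). Second, the genuinely delicate step you flag---choosing indices and lifts so that the triangles commute in the colimit---relies on the fact that in a finite-dimensional semisimple algebra any two homomorphisms inducing the same map on $\K0$ differ by an inner automorphism; you state this implicitly but it is worth making explicit, since it is what lets you correct each newly chosen lift to agree with the previous one. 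With that uniqueness-up-to-conjugacy lemma in hand, the bookkeeping goes through as you describe.
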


From the equivalence $\QGr kQ \equiv \Mod S(Q)$ and the fact that $S(Q)$ is ultramatricial, we get the following Corollary to Theorem \ref{thm.goodcite}

\begin{Cor} \label{cor.goodcite}
Let $kQ$ and $kQ'$ be path algebras. Then $\QGr kQ \equiv \QGr kQ'$ if and only if $\K0(\QGr kQ)\cong \K0(\QGr kQ')$ as pre-ordered abelian groups.
\end{Cor}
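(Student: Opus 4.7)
The plan is to reduce directly to Theorem \ref{thm.goodcite} via Smith's equivalence $\QGr kQ \equiv \Mod S(Q)$, where $S(Q)$ is ultramatricial. The forward direction is essentially formal: any $k$-linear equivalence of abelian categories preserves finitely generated projective objects (up to isomorphism) and exact sequences, hence induces an isomorphism of Grothendieck groups; and because it preserves the classes of objects (not just virtual classes), the induced isomorphism respects the pre-order coming from the effective cone generated by classes of honest objects. So $\QGr kQ \equiv \QGr kQ'$ immediately gives $\K0(\QGr kQ) \cong \K0(\QGr kQ')$ as pre-ordered abelian groups, with no assumption on GK-dimension.

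For the converse, first I would transport the hypothesis across Smith's equivalence: composing $\QGr kQ \equiv \Mod S(Q)$ with the given $\K0(\QGr kQ) \cong \K0(\QGr kQ')$ and then with the inverse of $\QGr kQ' \equiv \Mod S(Q')$ yields an isomorphism of pre-ordered abelian groups
\[
\K0(S(Q)) \;\cong\; \K0(\QGr kQ) \;\cong\; \K0(\QGr kQ') \;\cong\; \K0(S(Q')).
\]
Since $S(Q)$ and $S(Q')$ are ultramatricial, Theorem \ref{thm.goodcite} applies and gives $\Mod S(Q) \equiv \Mod S(Q')$. Chaining with Smith's equivalences once more delivers $\QGr kQ \equiv \QGr kQ'$.

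The only non-routine point is making sure the notion of ``pre-ordered $\K0$'' used for $\QGr kQ$ matches the one used in \cite{Good} for ultramatricial algebras, i.e., that Smith's equivalence $\QGr kQ \equiv \Mod S(Q)$ is not merely an equivalence of additive (or abelian) categories but in fact identifies the canonical pre-orders on $\K0$ given by classes of finitely generated projective modules. This should follow because the equivalence is exact and sends a chosen set of generating projectives of $\QGr kQ$ onto the canonical finitely generated projectives of $\Mod S(Q)$, so the monoid of isomorphism classes of finitely generated projectives — and hence the positive cone in $\K0$ — is preserved. I would therefore spend a sentence recording this identification and then the corollary follows immediately, with no further computation needed beyond invoking Theorem \ref{thm.goodcite}.
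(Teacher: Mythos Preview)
Your proposal is correct and follows exactly the paper's approach: the paper simply records the corollary as an immediate consequence of Smith's equivalence $\QGr kQ \equiv \Mod S(Q)$ with $S(Q)$ ultramatricial, together with Theorem~\ref{thm.goodcite}. Your added remarks about the forward direction and the compatibility of the pre-orders are fine elaborations, but the paper treats all of this as routine and gives no further argument.
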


Hence, to prove that $E_Q=E_{Q'}$ implies $\QGr kQ \equiv \QGr kQ'$ for path algebras of finite GK-dimension, it will be shown that $E_Q=E_{Q'}$ implies $\K0(\QGr kQ) \cong \K0(\QGr kQ')$ as preordered abelian groups. 

One of the tools that will be useful in calculating $K_0(\QGr kQ)$ will be the equivalence $\QGr kQ \equiv \Mod S(Q)$. Since the algebra $S(Q)$ is a direct limit of matricial algebras, $K_0(S(Q))$ is computed using a direct limit of finite rank free groups. We start of with a description of the ultramatricial algebras $S(Q)$. 

\subsection{A description of $S(Q)$.}

Let $Q$ be any quiver. Assume we have labeled the vertices as $Q_0=\{1,\ldots,r\}$ and let $M_Q=(a_{ij})$ be the incidence matrix for $Q$ with respect to this labeling. The convention is that $a_{ij}$ is the number of arrows from vertex $i$ to vertex $j$. Consider the sequence of vectors
$$
\bp_{n}=\bmat
p_{n,1} \\
\vdots \\
p_{n,r}
\emat
$$ 
where $\bp_0=(1\cdots 1)^T$ and 
$$
\bp_{n+1}=M_Q^T\bp_{n}.
$$
Associated to this sequence of integer valued vectors is a sequence of matricial algebras
$$
S_n=\prod_{i=1}^{r}M_{p_{n,i}}(k).
$$
As $\bp_{n+1}=M_Q^T\bp_n$, we have unital algebra homomorphisms $\theta_n:S_n\to S_{n+1}$ determined by the matrix $M_Q^T$ (see Chapter 4 of \cite{Dav}). Let 
$$
S(Q)=\varinjlim_{n} S_n,
$$ 
$S(Q)$ is called ultrmatricial because it is a limit of matricial algebras. In \cite{Sm1}, Smith proves there is an equivalence of categories 
$$
\QGr kQ \equiv \Mod S(Q).
$$

Since $K_0$ commutes with direct limits, 
$$
\K0(\QGr kQ)\cong K_0(S(Q))\cong \varinjlim _{n}K_0(S_n)
$$
as preordered abelian groups. As $S_n\cong \prod_{i=1}^r M_{p_{n,i}}(k)$ is a matricial algebra, $K_0(S_n)\cong \ZZ^r$. Moreover, under the identifications $K_0(S_n)=K_0(S_{n+1})=\ZZ^r$, the morphism $K_0(\theta_n):K_0(S_n) \to K_0(S_{n+1})$ is given by left multiplication by $M_Q^T$ (see Chapter 4 of \cite{Dav}). 

Using the simple fact that $(M_Q^T\bv)^T=\bv^TM_Q$, if we view vectors in $\ZZ^r$ as row vectors, the map $K_0(\theta_n)$ is given by right multiplication by the incidence matrix $M_Q$. Viewing $\ZZ^r$ as row vectors and $\K0(\theta_n)$ as right multiplication by $M_Q$ will make some labeling choices later a little more convenient. 

Thus, to compute the group $K_0(S(Q))\cong K_0(\QGr kQ)$, we need to compute the direct limit
\begin{equation} \label{eqn.k0comp}
\UseComputerModernTips
\xymatrix{ {\ZZ^r}\ar[r]^{M_Q} & {\ZZ^r}\ar[r]^{M_Q} & {\cdots} \ar[r]^{M_Q} & {\ZZ^r}\ar[r]^{M_Q} & {\cdots} }
\end{equation}
Also, Since $K_0(S_n)^+=\NN^r$, the positive cone $K_0(\QGr kQ)^+$ is the union in $K_0(\QGr kQ)$ given by
\begin{equation} \label{eqn.k+comp}
\UseComputerModernTips
\xymatrix{ {\NN^r}\ar[r]^{M_Q} & {\NN^r}\ar[r]^{M_Q} & {\cdots} \ar[r]^{M_Q} & {\NN^r}\ar[r]^{M_Q} & {\cdots} }
\end{equation}

\begin{Prop} \label{prop.k0comp}
Let $Q$ be a quiver. The Grothendieck group $(K_0(\QGr kQ))$ is isomorphic to the direct limit of the system \ref{eqn.k0comp} where the elements of $\ZZ^r$ are considered as row vectors and the maps are given by right multiplication by the incidence matrix $M_Q$. Moreover, the positive cone is the subset given by the direct limit of \ref{eqn.k+comp}. 
\end{Prop}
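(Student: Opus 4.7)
The plan is to simply assemble the ingredients collected in the paragraphs immediately preceding the statement; no new argument is really required.

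First, Smith's equivalence $\QGr kQ \equiv \Mod S(Q)$ gives an isomorphism $\K0(\QGr kQ) \cong \K0(S(Q))$ of preordered abelian groups. Since $K_0$ commutes with filtered colimits of unital rings and $S(Q) = \varinjlim_n S_n$, this yields
$$
\K0(\QGr kQ) \cong \varinjlim_n \K0(S_n),
$$
with positive cone the union (inside the colimit) of the images of the $\K0(S_n)^+$. This is the one place where a small check is needed: that the preorder on a filtered colimit of preordered groups coming from a directed system of unital rings is generated by the images of the positive cones. This is standard and follows from the fact that a finitely generated projective over $\varinjlim S_n$ is, up to stable isomorphism, pulled back from some $S_n$.

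Next, for each $n$ the algebra $S_n = \prod_{i=1}^r M_{p_{n,i}}(k)$ is matricial, so the classical identification gives $\K0(S_n) \cong \ZZ^r$ with $\K0(S_n)^+ = \NN^r$, the isomorphism sending the class of the simple module of the $i$-th factor to the standard basis vector $\mathbf{e}_i$. As recorded in the text (citing Chapter 4 of \cite{Dav}), the connecting map $\K0(\theta_n)\colon \ZZ^r \to \ZZ^r$ is then left multiplication by $M_Q^T$: the $(j,i)$-entry of $M_Q^T$ is the multiplicity of the $j$-th simple of $S_n$ appearing in the restriction of the $i$-th simple of $S_{n+1}$, which is exactly $a_{ji}$, the number of arrows $j \to i$.

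The last step is purely cosmetic: using $(M_Q^T v)^T = v^T M_Q$, we may view $\ZZ^r$ as row vectors and replace left multiplication by $M_Q^T$ with right multiplication by $M_Q$, giving exactly the directed system (\ref{eqn.k0comp}). Combining with the identification $\K0(S_n)^+ = \NN^r$ gives the description of the positive cone as the colimit of (\ref{eqn.k+comp}). The only potential obstacle is the bookkeeping with transposes and with the preorder in the colimit, but both are routine; the substantive work has already been done in citing \cite{Sm1} and \cite{Dav}.
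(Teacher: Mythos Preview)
Your proposal is correct and matches the paper's approach exactly: the proposition is stated in the paper as a summary of the preceding discussion, with no separate proof given, and the ingredients you list (Smith's equivalence, $K_0$ commuting with direct limits, the identification $K_0(S_n)\cong\ZZ^r$ with positive cone $\NN^r$, and the transpose trick) are precisely those the paper assembles before stating the result. There is nothing to add.
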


\begin{example}
Let $Q$ be the quiver
$$
\UseComputerModernTips
\xymatrix{ {\bullet}\ar[r] \ar@(ul,dl)[] & {\bullet} \ar@(ur,dr)[] }
$$
with
$$
M_Q=\bmat
1 & 1 \\
0 & 1
\emat.
$$

A simple induction shows
$$
M_Q^n=\bmat
1 & n \\
0 & 1
\emat.
$$
As $M_Q$ is invertible over $\ZZ$, the direct limit of \ref{eqn.k0comp} is $\ZZ^2$ with maps
$$
\UseComputerModernTips
\xymatrix{ {\ZZ^2}\ar[r]^{M_Q}\ar[drr]_{I} & {\ZZ^2}\ar[r]^{M_Q} \ar[dr]^{M_Q^{-1}} & {\cdots} \ar[r]^{M_Q} & {\ZZ^2}\ar[r]^{M_Q} \ar[dl]^{M_Q^{-n+1}} & {\cdots} \\
{} & {} & {\ZZ^2} & {} & {} } 
$$
Hence, $K_0(\QGr kQ)\cong \ZZ^2$. The positive cone is all elements $(z_1,z_2)\in \ZZ^2$ such that $(z_1,z_2)M_Q^n \in \NN^2$ for $n\gg 0$. As
$$
(z_1,z_2)M_Q^n=(z_1,nz_1+z_2)
$$
we determine $(z_1,z_2)\in K_0^+(\QGr kQ)$ if and only if $(z_1,z_2)\in (0,\NN)$ or $(z_1,z_2)\in (\NN_{>0},\ZZ)$. 
\end{example}

\subsection{An ordered abelian group associated to a finite poset.} 

Let $(\cP,\preceq)$ be a finite poset. Without loss of generality, we can assume $\cP=\{1,2,\ldots,n\}$ and $i\preceq j$ implies $i\leq j$ where $\leq$ is the usual ordering of integers. Associate to $\cP$ the free abelian group $\ZZ^n$ of rank $n=|\cP|$ and let $e_i$ be the element $(0,\cdots,1,\cdots,0)$ where the $1$ is in the $i$-th position. For each integer $i\in \cP$, define 
$$
\Delta(\cP)_i=\left(\NN^+e_i+\sum_{i \prec k}\ZZ e_k\right) \cup \{0\}
$$ 
which is a submonoid of $\ZZ^n$ and let $\Delta(\cP)$ be the submonoid $\sum_{i=1}^{n} \Delta(\cP)_i$.

\begin{Lem}
The monoid $\Delta(\cP)$ generates $\ZZ^n$ as an abelian group and is a strict positive cone.
\end{Lem}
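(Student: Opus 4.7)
The generation statement is immediate: each standard basis vector $e_i$ lies in $\Delta(\cP)_i \subset \Delta(\cP)$ (take $a_i = 1$ and all $b_{i,k} = 0$), so $\Delta(\cP)$ contains a $\ZZ$-basis of $\ZZ^n$ and a fortiori generates $\ZZ^n$ as an abelian group. The real content is strictness: $\Delta(\cP) \cap (-\Delta(\cP)) = \{0\}$. My plan is to attach to every nonzero $x \in \Delta(\cP)$ a distinguished leading coordinate $m_0(x)$ at which the coefficient of $x$ is \textbf{strictly positive}. Since $m_0(x)$ will depend only on the support of $x$, not on any chosen decomposition, one automatically has $m_0(x) = m_0(-x)$; so if both $x$ and $-x$ were nonzero elements of $\Delta(\cP)$, the leading coefficient of $x$ would have to be both strictly positive and strictly negative, a contradiction.

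To produce the leading coordinate I would write $x = \sum c_m e_m$, let $S = \{m : c_m \neq 0\}$, and take $m_0$ to be any $\preceq$-minimal element of $S$. Fix any decomposition $x = \sum_{i \in I} x_i$ with $x_i \in \Delta(\cP)_i \setminus \{0\}$, so $x_i = a_i e_i + \sum_{i \prec k} b_{i,k} e_k$ with $a_i \geq 1$. The key step is to show that no $i \in I$ satisfies $i \prec m_0$: otherwise let $i' \in I$ be $\preceq$-minimal subject to $i' \preceq i \prec m_0$; by minimality of $i'$ in $I$, no $x_j$ with $j \in I \setminus \{i'\}$ contributes to coordinate $e_{i'}$ (indices $\prec i'$ do not occur in $I$, and indices $\succ i'$ or unrelated to $i'$ do not produce $e_{i'}$-components), so $c_{i'} = a_{i'} \geq 1 > 0$, contradicting that $m_0$ is $\preceq$-minimal in $S$. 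Hence coordinate $e_{m_0}$ receives contributions only from $x_{m_0}$ itself, which forces $m_0 \in I$ (otherwise $c_{m_0}$ would vanish) and $c_{m_0} = a_{m_0} > 0$, as desired.

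The main obstacle is the non-uniqueness of the decomposition $x = \sum x_i$: one cannot simply read off ``the'' index $m_0$ from an expression. The way around it is to define $m_0$ intrinsically from the coordinates of $x$ and then prove that any valid decomposition is automatically compatible with $m_0$ in the sense above. Once this coordinate-wise ``leading term'' reduction is in place, the strictness conclusion $\Delta(\cP) \cap (-\Delta(\cP)) = \{0\}$ follows in a single line from the symmetry $m_0(x) = m_0(-x)$, completing the lemma.
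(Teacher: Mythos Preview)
Your argument is correct. Both you and the paper prove strictness by exhibiting, for every nonzero $x\in\Delta(\cP)$, a coordinate that is forced to be strictly positive; the difference is in how that coordinate is selected. The paper exploits the standing assumption that the labelling $\{1,\dots,n\}$ is a linear extension of $\preceq$: it picks the $\le$-least index $m$ with $n_m\neq 0$ in a chosen decomposition and observes immediately that all earlier coordinates vanish, so the $m$-th entry of $\bv$ equals $n_m>0$. You instead work intrinsically with the partial order, taking $m_0$ to be $\preceq$-minimal in the support of $x$ and then arguing (via a minimal $i'\in I$ with $i'\preceq i$) that no index of the decomposition lies strictly below $m_0$. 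Your route is slightly longer because $\preceq$ is not total, but it buys something: the statement ``$c_{m_0}>0$ for every $\preceq$-minimal $m_0\in\supp(x)$'' is precisely the forward implication of the paper's next lemma (Lemma~\ref{lem.altdelta}), so you have in effect proved half of that characterisation along the way. The paper's version is quicker here because the linear extension collapses the minimality bookkeeping to a single line; yours is more order-theoretic and does not rely on the particular labelling.
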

\begin{proof}
That $\Delta(\cP)$ generates $\ZZ^n$ as an abelian group follows since $e_i\in \Delta$ for all $i\in \cP$. Every element $\bv \in \Delta(\cP)$ can be written as
$$
\bv=\sum_{i\in \cP}(n_ie_i+\sum_{i\prec k}{z_k^i}e_{k})
$$
for some nonnegative integers $n_i$ and integers $z^i_k$. If $n_i=0$ then $z_k^i=0$ for all $k$. Let $m$ be minimal, in the usual $\leq$ ordering on integers, such that $n_m\neq 0$. Then 
$$
\bv=(0,\ldots,0,n_m,\ldots)
$$
where $n_m>0$. Hence, if $\bv \in \Delta(\cP)$ is not zero, then the first nonzero entry in $\bv$ is positive. Hence, the first nonzero entry of $-\bv$, for $\bv\in \Delta(\cP)$, is negative and so cannot be in $\Delta(\cP)$. Thus, the only element $\bv$ such that $\bv$ and $-\bv$ are in $\Delta(\cP)$ is $\bv=0$.
\end{proof} 

Let $\bv=(v_1,\ldots,v_n)\in \ZZ^n$. Define $\supp(\bv)$, the support of $\bv$, to be
$$
\supp(\bv)=\{1\leq i\leq n\;|\; v_i\neq 0\}.
$$
Since $\supp(\bv)$ is a subset of $\cP$, it inherits a poset structure whenever it is not empty.

\begin{Lem} \label{lem.altdelta}
Let $\bv=(v_1,\ldots,v_n)$ be a nonzero element in $\ZZ^n$. Then $\bv \in \Delta(\cP)$ if and only if $v_j>0$ for all $j$ minimal in $\supp(\bv)$. 
\end{Lem}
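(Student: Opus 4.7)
The plan is to prove both directions directly from the definition $\Delta(\cP)=\sum_i \Delta(\cP)_i$, exploiting the fact that each nonzero element of $\Delta(\cP)_i$ has the form $n_ie_i+\sum_{k\succ i}z^i_ke_k$ with $n_i>0$, so it contributes positively in direction $e_i$ and arbitrarily (but only) in directions $e_k$ strictly above $i$.

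For the forward direction, suppose $\bv=\sum_{i\in I}w^i$ with $w^i=n_ie_i+\sum_{k\succ i}z^i_ke_k$ and $n_i>0$ for each $i\in I$, and let $j$ be minimal in $\supp(\bv)$. The key claim is that no element of $I$ lies strictly below $j$; once this is proved, the sole contribution to $v_j$ comes from $w^j$ (if $j\in I$), giving $v_j=n_j>0$ (and $j\notin I$ is excluded since it would force $v_j=0$, contradicting $j\in\supp(\bv)$). To prove the key claim, suppose for contradiction that $\{i\in I:i\prec j\}$ is nonempty and pick $i_*$ minimal in this set under the poset order. Minimality of $j$ in $\supp(\bv)$ forces $v_{i_*}=0$; but only $w^{i'}$ with $i'\preceq i_*$ contribute to $v_{i_*}$, so $0=v_{i_*}=n_{i_*}+\sum_{i'\in I,\,i'\prec i_*}z^{i'}_{i_*}$, hence $\sum_{i'\in I,\,i'\prec i_*}z^{i'}_{i_*}=-n_{i_*}<0$. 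This forces some $i'\in I$ with $i'\prec i_*$, and since $i'\prec i_*\prec j$ we obtain $i'\in\{i\in I:i\prec j\}$, contradicting the minimality of $i_*$.

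For the backward direction, suppose $\bv\neq 0$ and $v_j>0$ for every $j$ minimal in $\supp(\bv)$. Let $J$ denote the set of minimal elements of $\supp(\bv)$. Since $\supp(\bv)$ is finite, every $k\in\supp(\bv)\setminus J$ lies strictly above some element of $J$, so choose $j_k\in J$ with $j_k\prec k$. Define, for each $j\in J$,
\[
w^j:=v_je_j+\sum_{k:\,j_k=j}v_ke_k.
\]
Because $v_j>0$ and every additional $k$ in the sum satisfies $k\succ j$ strictly, $w^j\in\Delta(\cP)_j$. Summing over $j\in J$ gives $\sum_{j\in J}w^j=\sum_{j\in J}v_je_j+\sum_{k\notin J}v_ke_k=\bv$, so $\bv\in\Delta(\cP)$.

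The main obstacle is the forward direction's descent argument: one must recognize that although $\Delta(\cP)_i$ allows arbitrary integer coefficients in the directions strictly above $i$, the positivity at $i$ itself ($n_i>0$) cannot be cancelled out without introducing an even smaller generator in $I$, and iterating this within a finite $I$ yields a contradiction. The backward direction is essentially a bookkeeping exercise once the right distribution of off-diagonal coefficients among minimal indices is chosen.
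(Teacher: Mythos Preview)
Your proof is correct and follows essentially the same approach as the paper: the forward direction is the same descent argument (you package it as a single minimality contradiction, the paper builds the descending chain $j\succ i_1\succ i_2\succ\cdots$ explicitly), and for the backward direction you distribute the non-minimal coordinates among chosen minimal ones in one step while the paper peels off one minimal element at a time and inducts on their number. The underlying ideas coincide.
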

\begin{proof}
Suppose $\bv=(v_1,\ldots,v_n)=\sum_{i\in P}(n_ie_i+\sum_{i\prec k}{z_k^i}e_{k}) \in \Delta(\cP)$. If we write
$$
n_ie_i+\sum_{i\prec k}{z_k^i}e_{k}=(z_1^i,\ldots,z_n^i),
$$
then $z_k^i=0$ if $i\not\preceq k$ and $z_i^i=n_i\geq 0$. Also, $n_i=0$ implies $z_k^i=0$ for all $k$.

Using this notation, 
$$
v_j=\sum_{i=1}^{n}{z_j^i}=n_j+\sum_{i\neq j}z_j^i.
$$ 

$(\Rightarrow)$Suppose $v_j<0$. Since $n_j \geq 0$, this forces $z_{j}^{i_1}< 0$ for some $i_1\prec j$. As $z_j^{i_1}\neq 0$, then necessarily $n_{i_1} \neq 0$. If $v_{i_1}\neq 0$, then $i_1\in \supp(\bv)$ and $i_1\prec j$ which shows $j$ is not minimal in $\supp(\bv)$. If $v_{i_1}=0$, then since $v_{i_1}=n_{i_1}+\sum_{i\neq i_1}z_{i_1}^i$ and $n_{i_1}> 0$, there exists an $i_2\prec i_1$ such that $z_{i_1}^{i_2}<0$ and hence that $n_{i_2}>0$. If $v_{i_2}\neq 0$ then $i_2\prec j$ and $i_2\in \supp(\bv)$ which shows $j$ is not minimal. If $v_{i_2}=0$, we can continue this process to get a sequence
$$
j\succ i_1\succ i_2\cdots.
$$ 
However, as $j>i_1>i_2>\cdots >1$, this process must stop. The only way for it to stop is if an element $i_n$ is reached in which $v_{i_n}\neq 0$ since $v_{i_n}=0$ implies the existence of a $i_{n+1}$. As $v_{i_n}\neq 0$, $i_n\in \supp(\bv)$ and $i_n\prec j$ which shows $j$ is not minimal. Thus, if $j\in \supp(\bv)$ is minimal, then $v_j>0$.

$(\Leftarrow)$ Suppose $\bv=(v_1,\ldots,v_n)$ satisfies $v_j>0$ for all minimal $j\in \supp(\bv)$. Let $i_1<i_2<\cdots<i_l$ be all the minimal elements of $\supp(\bv)$. If $l=1$, then $\supp(\bv)$ has one minimal element and $v_k\neq 0$ implies $i_1\prec k$. Hence,
$$
\bv=v_{i_1}e_{i_1}+\sum_{i_1\prec k}v_ke_k \in \Delta(\cP).
$$

Now suppose $l>1$. Define
$$
\bv_1=v_{i_1}e_{i_1}+\sum_{i_1\prec k}v_ke_k.
$$
Since $v_{i_1}\geq 1$, we know $\bv_1\in \Delta(\cP)$. Let $\bv'=\bv-\bv_1=(v'_1,\ldots,v'_n)$. If $i_1\preceq k$, then $v'_k=0$ while $i_1\not \preceq k$ implies $v'_k=v_k$. Since $\supp(\bv')=\supp(\bv)\-\{k\;|\; i\preceq k\}$ and $i_2,\ldots i_l \in \supp(\bv')$, these are all of the minimal elements of $\supp(\bv')$. Therefore, $\bv'$ is a vector for which $v'_j>0$ for all minimal $j\in \supp(\bv')$ and there are only $l-1$ minimal elements. Hence, by induction, $\bv' \in \Delta(\cP)$ and it follows that $\bv=\bv_1+\bv' \in \Delta(\cP)$ since $\bv_1$ and $\bv'$ are in $\Delta(\cP)$.
\end{proof}

\subsection{$\K0(\QGr kQ)$ for a path algebra of finite GK-dimension.} \label{sec.K0proof}

Let $Q$ be a quiver such that $kQ$ has finite GK-dimension and let $M=M_Q$ be its incidence matrix. By Proposition \ref{prop.k0comp}, $K_0(\QGr kQ)$ is the direct limit of the direct system \ref{eqn.k0comp} with positive cone determined by \ref{eqn.k+comp}. Recall that we can view $E_Q$ as a finite poset.

\begin{Thm} \label{thm.k0qgrkQ}
Let $kQ$ be a path algebra of finite GK-dimension and $E_Q$ the associated Ext-quiver with $p=|(E_Q)_0|$ the number of cyclic vertices. There is an isomorphism 
$$
\phi:\K0(\QGr kQ) \cong \ZZ^p
$$
such that $\phi(\K0(\QGr kQ)^+)=\Delta(E_Q)$.
\end{Thm}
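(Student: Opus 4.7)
The plan is to apply Proposition~\ref{prop.k0comp}, which presents $\K0(\QGr kQ)$ as the colimit $L := \varinjlim(\ZZ^r, \cdot M_Q)$ with positive cone $L^+ := \varinjlim(\NN^r, \cdot M_Q)$, and then to analyse $L$ by first normalising the incidence matrix via a Veronese. Set $d$ equal to the $\text{lcm}$ of the lengths of all simple cycles of $Q$. By Theorem~\ref{thm.ABV}, $\K0(\QGr kQ) \cong \K0(\QGr kQ^{(d)})$ as pre-ordered groups, and I claim the Ext-quiver is preserved, $E_Q = E_{Q^{(d)}}$: this is because $E_Q$ is characterised intrinsically by the isomorphism classes of simple objects of $\QGr kQ$ together with their $\Ext^1$'s (Theorems~\ref{thm.simpleispoint} and~\ref{thm.extofpoints}), and the equivalence of Lemma~\ref{lem.pointtopoint} carries cyclic point modules to cyclic point modules. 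After this reduction each cyclic vertex carries a single loop, and by Lemma~\ref{lem.simpleclosedsubs} the SCCs of $Q^{(d)}$ are single vertices; a topological sort of these SCCs puts $M_Q := M_{Q^{(d)}}$ in upper-triangular form with diagonal $1$ at the $p$ cyclic positions and $0$ at the $r-p$ acyclic positions.

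Next I would compute $L$. The restriction of $M_Q$ to acyclic rows and columns is the incidence matrix of the acyclic subquiver on its acyclic vertices, hence nilpotent; together with the $0/1$ diagonal this shows that $K := \bigcup_n \ker(\cdot M_Q^n)$ is a saturated subgroup of $\ZZ^r$ of rank $r - p$ (the integral part of the rational generalised left $0$-eigenspace of $M_Q$). The quotient $\bar V := \ZZ^r/K$ is then free of rank $p$ with basis $\{[e_w] : w \text{ cyclic}\}$, and the induced action $\bar M_Q$ on $\bar V$ is upper-triangular with $1$'s on the diagonal in any ordering of the cyclic vertices compatible with the poset $E_Q$; writing $\bar M_Q = I + N$ with $N$ non-negative and strictly upper-triangular, $\bar M_Q$ is a $\ZZ$-automorphism of $\bar V$. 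Hence $L = \varinjlim(\bar V, \bar M_Q) \cong \bar V \cong \ZZ^p$, yielding the isomorphism $\phi$.

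The remaining step is to identify $\phi(L^+)$ with $\Delta(E_Q)$. Via the identification $L = \bar V$, one has $\bar z \in L^+$ iff $\bar z \cdot \bar M_Q^n \in \pi(\NN^r)$ for some $n$, where $\pi \colon \ZZ^r \to \bar V$ is the quotient map; since for each acyclic $v$ the class $\pi(e_v)$ is a $\NN$-linear combination of cyclic basis vectors (obtained by rewriting $e_v$ modulo elements of $K$), one has $\pi(\NN^r) = \NN^p$ in the cyclic basis, so $\bar z \in L^+$ iff $\bar z \cdot \bar M_Q^n \in \NN^p$ for some $n$. Lemma~\ref{lem.altdelta} describes $\Delta(E_Q)$ as the set of nonzero $\bar z$ for which $\bar z_j > 0$ at every minimal $j \in \supp(\bar z)$, together with $0$. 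For necessity, upper-triangularity forces $(\bar z \cdot \bar M_Q^n)_j = \bar z_j$ at any minimal $j \in \supp(\bar z)$, so $\bar z \in L^+$ gives $\bar z_j > 0$. For sufficiency, at each $w'$ in the upper closure of $\supp(\bar z)$, pick a minimal $j \in \supp(\bar z)$ with $j \preceq w'$; the contribution $\bar z_j \sum_k \binom{n}{k} (N^k)_{jw'}$ is a polynomial in $n$ of degree equal to the longest chain from $j$ to $w'$ in $E_Q$, and for each non-minimal $w'' \in \supp(\bar z)$ below $w'$ one can find a minimal $j'' \in \supp(\bar z)$ with $j'' \prec w''$, so the longest chain from $j''$ to $w'$ is strictly longer than that from $w''$ to $w'$, forcing the leading positive contributions from minimal elements to dominate any negative contributions from non-minimal ones. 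Thus $\bar z \cdot \bar M_Q^n \in \NN^p$ for $n$ large, and $\bar z \in L^+$.

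The step I expect to be the main obstacle is the polynomial-dominance bookkeeping in the last paragraph: one must carefully track the degrees in $n$ of each entry of $\bar M_Q^n$ along chains in $E_Q$ and confirm strict domination of the leading positive contributions over lower-order, possibly negative contributions, simultaneously at every coordinate in the upper closure of $\supp(\bar z)$. The Veronese invariance $E_Q = E_{Q^{(d)}}$ as a labelled poset and the non-negativity of $\pi(e_v)$ for acyclic $v$ in the cyclic basis are two further supporting facts that need verification, but both reduce to structural observations already available from the earlier sections.
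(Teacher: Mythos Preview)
Your overall strategy---Veronese reduction, direct-limit description of $\K0$, and identifying the colimit with $\bar V=\ZZ^r/K$---is sound and parallels the paper's setup. However, two of your intermediate claims are false, and both are load-bearing.

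\textbf{First, $\pi(\NN^r)=\NN^p$ fails.} Take the quiver with acyclic vertex $1$, cyclic vertices $2,3$, arrows $1\to 2$, $2\to 3$, and loops at $2,3$. Then $M_Q=\begin{pmatrix}0&1&0\\0&1&1\\0&0&1\end{pmatrix}$, $K=\{(a,-a,a)\}$, and a direct check gives $\pi(e_1)=[e_2]-[e_3]$, which lies in $\Delta(E_Q)$ but not in $\NN^2$. So your necessity argument ``$(\bar z\bar M_Q^n)_j=\bar z_j\ge 0$ because $\bar z\bar M_Q^n\in\NN^p$'' breaks: you only know $\bar z\bar M_Q^n\in\pi(\NN^r)$, which can have negative coordinates.

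\textbf{Second, $N\ge 0$ fails.} Take cyclic $1$, acyclic $2$, cyclic $3,4$, arrows $1\to 2$, $2\to 3$, $3\to 4$, loops at $1,3,4$. One computes $\pi(e_2)=[e_3]-[e_4]$, and hence $[e_1]\bar M_Q=\pi(e_1+e_2)=[e_1]+[e_3]-[e_4]$, so $\bar M_Q=\begin{pmatrix}1&1&-1\\0&1&1\\0&0&1\end{pmatrix}$ in the basis $[e_1],[e_3],[e_4]$. Your polynomial-dominance argument tacitly uses $N\ge 0$ to read off that $(N^k)_{jw'}$ is positive exactly when there is a length-$k$ chain in $E_Q$, and to guarantee the leading coefficient is positive; with negative entries this requires separate justification.

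The paper confronts exactly this: its matrix $N$ (your $\bar M_Q$) is computed explicitly, and Propositions~\ref{prop.minnu} and~\ref{prop.bij(z)} establish the two facts you actually need---$b_{ij}=0$ for incomparable $\nu_i,\nu_j$, and $b_{ij}(z)=b_{ij}z$ with $b_{ij}>0$ when $\nu_j$ is minimal over $\nu_i$. It then avoids the ``$\bar z\bar M_Q^n\in\NN^p$'' route entirely: the inclusion $\psi(\K0^+)\subseteq\Delta(E_Q)$ comes from $[\sP_{\nu_i}(z)]=e_i+\sum_{i\prec j}b_{ij}(z)e_j\in\Delta(E_Q)$ (using only the vanishing for incomparable pairs), and the reverse inclusion is an induction on the level sets $L_1,L_2,\dots$ of $E_Q$, using that the linear functions $b_{ij}(z)$ for covering pairs can be made arbitrarily negative by choosing $z$. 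Your approach can be repaired, but the repair is essentially re-deriving those two propositions; the shortcuts $\pi(\NN^r)=\NN^p$ and $N\ge 0$ do not hold.
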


The proof of Theorem \ref{thm.k0qgrkQ} is somewhat long and will be given in the section \ref{sec.proofk0}. 

If $kQ$ and $kQ'$ are path algebras of finite GK-dimension such that $E_Q=E_Q'$, then it follows from Theorem \ref{thm.k0qgrkQ} that $\K0(\QGr kQ) \cong \K0(\QGr kQ')$ as pre-ordered abelian groups. Hence, by Corollary \ref{cor.goodcite} 
$$
\QGr kQ \equiv \QGr kQ'.
$$
Thus, the direction of Theorem \ref{thm.main3Q} which states that $E_Q=E_{Q'}$ implies $\QGr kQ \equiv \QGr kQ'$ has been established. These remarks together with the remarks following Theorem \ref{thm.extofpoints} proves Theorem \ref{thm.main3Q}.  

\subsubsection{Proof of Theorem \ref{thm.k0qgrkQ}.} \label{sec.proofk0}

Let $kQ$ be a path algebra of finite GK-dimension and let $\mC(Q)=\{\cC_1,\ldots,\cC_p\}$ be the set of simple cycles in $Q$. Let $l_i$ be the length of $\cC_i$ and $L=l_1\cdots l_p$. By Verevkin's result, the categories $\QGr kQ$ and $\QGr k(Q^{(L)})$ are equivalent. Moreover, by the choice of $L$, every cycle in $Q^{(L)}$ has length one, i.e, is just a loop at a cyclic vertex.  

Let $n$ be the number of vertices of $Q^{(L)}$ (equivalently $Q$). Since every closed path in $Q^{(L)}$ is just some power of a loop based at a cyclic vertex, we can label the vertices of $Q^{(L)}$ with the integers $\{1,\ldots,n\}$ in such a way that if there is an arrow from $i$ to $j$, then $i\leq j$. With this labeling, the incidence matrix $M^L=M_{Q^{(L)}}$ becomes upper triangular with the diagonal entries being either zero or one. The number of $1$'s down the diagonal is precisely the number of cyclic vertices in $Q^{(L)}$ (equivalently $Q$). Let $p$ be the number of cyclic vertices. 

Considering $M^L$ as a linear operator acting on the right on $\QQ^n$, it is possible that the rank of $M^L$ is greater than the number of ones down the main diagonal. However, we can put $M^L$ into Jordan canonical form over $\QQ$ as all eigenvalues of $M^L$ lie in $\QQ$. Hence, the Jordan form of $M^L$ is
$$
J(1,n_1)\oplus \cdots \oplus J(1,n_i)\oplus J(0,m_{1})\oplus \cdots \oplus J(0,m_q).
$$  
Here, $J(\lambda, m)$ is the Jordan block matrix of size $m$ and diagonal entries $\lambda$, $n_1+\cdots+n_i=p$ is the number of cyclic vertices and $n_1+\cdots+n_i+m_1+\cdots+m_q=n$. Since each matrix $J(0,m_j)$ is nilpotent of degree $m_j$ and $m_j\leq n$, $J(0,m_j)^n=0$. Hence, for all $m\geq n$, the matrix $(M^L)^m=M^{Lm}$ will have rank $n_1+\cdots+n_i=p$ which is the number of cyclic vertices in $Q$. 

Now $M^{Ln}$ is the incidence matrix for $(Q^{(L)})^{(n)}=Q^{(Ln)}$ under the given labeling of vertices. Also, since $n$ is the number of vertices of $Q^{(L)}$, if there is a path in $Q^{(L)}$ from a cyclic vertex $i$ to a cyclic vertex $j$, then there must be a path of length $n$. Hence, in the quiver $(Q^{(L)})^{(n)}=Q^{(Ln)}$, there is a path from $i$ to $j$ if and only if there is an arrow in $Q^{(Ln)}$ from $i$ to $j$ for cyclic vertices $i$ and $j$.  

Putting the last few paragraphs of discussion together, we now have the following situation. To compute $\K0(\QGr kQ)$, we assume $Q$ is a quiver such that:
\begin{itemize}
\item Every simple cycle of $Q$ is a loop at a cyclic vertex,
\item The incidence matrix $M$ is upper triangular with diagonal entries in $\{0,1\}$,
\item The rank of $M$ is $p$ which is the number of cyclic vertices,
\item For cyclic vertices $i$ and $j$, there is a path from $i$ to $j$ if and only if there is an arrow from $i$ to $j$.
\end{itemize}

For $m\in \NN$, let $a_{ij}(m)$ be the $ij$-th entry of $M^m$ and $R_i(m)$ the $i$-th row of $M^m$. Let $\nu_1<\cdots <\nu_p$ be the indices for which the diagonal elements $a_{\nu_i,\nu_i}=1$, i.e., $\{\nu_1,\ldots,\nu_p\}$ are the cyclic vertices of $Q$. With this labeling of the cyclic vertices, we realize the poset $E_Q$ as the set $\{1,\ldots,p\}$ and for which $i\prec j$ if $\nu_i\prec \nu_j$, that is, if there is an arrow in $Q$ from $\nu_i \to \nu_j$.

The rows $\{R_{\nu_1},\ldots,R_{\nu_p}\}$ provide a $\QQ$-basis for $\QQ^nM$ as $M$ has rank $p$ and the rows $R_{\nu_i}$ are linearly independent. Suppose $\bv \in \ZZ^n$, then there are rational numbers $b_i$ such that
$$
\bv M=\sum_{i=1}^pb_iR_{\nu_i}.
$$
Find $j$ minimal in the usual ordering of integers such that $b_j\neq 0$. Since the first nonzero entry of $R_{\nu_j}$ is a one and all the nonzero entries of $R_{\nu_l}$ for $l>j$ occur further to the right, the first nonzero entry of $\sum_{i=j}^pb_iR_{\nu_i}$ is $b_j$. Hence, $b_j$ must be an integer since $\bv M$ has integer entries. Considering the element 
$$
\bv M-b_jR_{\nu_j}=(\bv-b_je_{\nu_j})M=\sum_{i=j+1}^pb_iR_{\nu_i}
$$ 
we determine by the same reasoning that the next nonzero $b_l$ for $l>j$ is also an integer and can continue to conclude that each $b_i$ is an integer for all $i$. Therefore, the vectors $\{R_{\nu_1},\ldots,R_{\nu_p}\}$ provide a $\ZZ$-basis for the group $\ZZ^nM$.

For any row $R_{\nu_i}$ of $M$, we can write
$$
R_{\nu_i}M=\sum_{j=1}^{p}b_{ij}R_{\nu_j}
$$
for some integers $b_{ij}$. Let $N=(b_{ij})$ be the $p\times p$ matrix with $b_{ij}$ as the $ij$-th entry and $R$ be the $p\times n$ matrix whose $i$-th row is $R_{\nu_i}$: 
$$
R=\bmat
R_{\nu_1} \\
\vdots \\
R_{\nu_p}
\emat.
$$
By the definition of $N$ and $R$, 
$$
NR=RM.
$$

Note that $R_{\nu_i}M=R_{\nu_i}(2)$, the $\nu_i$-th row of $M^2$. As $M$ is upper triangular, we know $M^2$ is upper triangular, moreover, the $(\nu_i,\nu_i)$ entry of $M^m$ is a one for all $m$. Hence, the first nonzero entry of $R_{\nu_i}M$, which is a $1$, occurs in column $\nu_i$. As $R_{\nu_i}M=\sum b_{ij}R_{\nu_j}$ and $R_{\nu_j}$ has a $1$ in the $\nu_j$-th column, we get $b_{ij}=0$ for $j<i$. Also, from this we see $b_{ii}=1$ as $b_{ii}$ is the first nonzero entry of $\sum b_{ij}R_{\nu_j}$.

As the entries of $N$ are the $b_{ij}$, the fact that $b_{ij}=0$ if $j<i$ and $b_{ii}=1$ implies $N$ is an upper triangular matrix with $1$'s down the main diagonal. Hence, $N$ is an automorphism of $\ZZ^p$. Consider the following commutative diagram:
\begin{equation} \label{eqn.freek0}
\UseComputerModernTips
\xymatrix{ {\ZZ^p}\ar[r]^{N} \ar[d]_{R} & {\ZZ^p}\ar[r]^{N} \ar[d]_{R} & {\cdots}\ar[r]^{N} & {\ZZ^p}\ar[r]^{N} \ar[d]_{R} & {\ZZ^p}\ar[r]^{N} \ar[d]_{R} & {\cdots} \\
           {\ZZ^n}\ar[r]_{M} & {\ZZ^n}\ar[r]_{M} & {\cdots}\ar[r]_{M} & {\ZZ^n}\ar[r]_{M} & {\ZZ^n}\ar[r]_{M} & {\cdots} }
\end{equation}    
where the maps are right multiplication by the matrix listed. 

\begin{Prop} \label{prop.rankp}
The group $\K0(\QGr kQ)$ is free of rank $p$.
\end{Prop}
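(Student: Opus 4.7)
The plan is to show that the commutative diagram \eqref{eqn.freek0} is a morphism of directed systems whose induced map on direct limits is an isomorphism $\ZZ^p \xrightarrow{\sim} \K0(\QGr kQ)$.

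First I would argue that the direct limit of the top row in \eqref{eqn.freek0} is $\ZZ^p$. Since $N$ is an upper triangular integer matrix with $1$'s on the diagonal, it has determinant $1$, so right multiplication by $N$ is an automorphism of $\ZZ^p$. The colimit of a directed system in which every transition map is an isomorphism is canonically identified with any of its stages, giving $\varinjlim(\ZZ^p \xrightarrow{N} \ZZ^p \xrightarrow{N} \cdots) = \ZZ^p$.

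Next I would verify that the vertical maps $R$ define a morphism of directed systems and hence induce a homomorphism $\Phi : \ZZ^p \to \K0(\QGr kQ)$. Commutativity of each square is exactly the identity $NR = RM$ already established in the text. It remains to check $\Phi$ is both injective and surjective. For surjectivity, given an element of $\K0(\QGr kQ)$ represented by $\bv \in \ZZ^n$ at stage $k$ of the bottom system, observe that after one application of the transition map $\bv$ becomes $\bv M$, which lies in the image subgroup $\ZZ^n M$. Since $\{R_{\nu_1}, \ldots, R_{\nu_p}\}$ is a $\ZZ$-basis of $\ZZ^n M$ (already proved in the paragraphs preceding the proposition), we can write $\bv M = \bw R$ for a unique $\bw \in \ZZ^p$, so the class of $\bv$ in the colimit is in the image of $\Phi$.

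For injectivity, suppose $\bw \in \ZZ^p$ represents an element at stage $k$ of the top system whose image $\bw R$ at stage $k$ of the bottom system becomes zero in the colimit. Then $\bw R M^m = 0$ for some $m \ge 0$, and iterating $RM = NR$ gives $\bw N^m R = 0$. Because the rows $R_{\nu_1}, \ldots, R_{\nu_p}$ are $\QQ$-linearly independent in $\QQ^n$, the matrix $R$ represents an injective homomorphism $\ZZ^p \hookrightarrow \ZZ^n$, so $\bw N^m = 0$; since $N$ is invertible we conclude $\bw = 0$, and thus the class of $\bw$ is already zero in the top colimit. This completes the proof.

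I do not expect a real obstacle here: all of the serious combinatorial work (the existence of the basis $\{R_{\nu_i}\}$ of $\ZZ^n M$, the upper-triangular-with-unit-diagonal structure of $N$, and the relation $NR = RM$) has already been done. The only thing to watch is that one must pass to $\bv M$ in the bottom system, rather than $\bv$ itself, in order to land in the span of the $R_{\nu_i}$; this is harmless because $\bv$ and $\bv M$ represent the same class in the colimit.
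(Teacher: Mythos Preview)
Your proof is correct and follows essentially the same approach as the paper: both use diagram \eqref{eqn.freek0}, identify the top colimit as $\ZZ^p$ via the invertibility of $N$, and then argue that $R$ induces an isomorphism on colimits using that the $R_{\nu_i}$ form a $\ZZ$-basis of $\ZZ^nM$. Your version is in fact more careful than the paper's, which asserts injectivity of the induced map directly from injectivity of $R$ at each stage without spelling out the passage through $N^m$; your explicit argument that $\bw R M^m = \bw N^m R = 0$ forces $\bw = 0$ fills that small gap.
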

\begin{proof}
As $N$ is an automorphism, the direct limit of the top row in diagram \ref{eqn.freek0} is just $\ZZ^p$. The direct limit of the bottom row is the Grothendieck group $\K0(\QGr kQ)$. Since the rows of the matrix $R$ are linearly independent, right multiplication by $R$ is an injective map. Also, all the squares in the diagram commute so $R$ induces an injective map $\phi:\ZZ^p \to \K0(\QGr kQ)$.

Since the rows of $R$ are a basis for the image of the matrix $M$, we know $\ZZ^nM=\ZZ^pR$. Thus, the map $\phi$ is surjective and hence an isomorphism. This establishes the fact that $\K0(\QGr kQ)$ is a free abelian group of rank $p$ which is the number of cyclic vertices. 
\end{proof}

Because $N$ can have negative entries we cannot get the positive cone directly from the previous isomorphism induced by $R$. Nevertheless, $N$ is still useful in computing the positive cone.

If $\sM \in \QGr kQ$ is finitely generated projective, there is an isomorphism
$$
\sM \cong \bigoplus_{i\in Q_0}\left( \bigoplus_{j\in J_i} \pi^*e_ikQ(-n_{ij})^{m_{ij}}\right)
$$
for some finite sets $J_i$, $n_{ij}\in \ZZ$, and $m_{ij}\in \NN$(see Proposition 3.2 in \cite{Sm1}). Hence, the Grothendieck group $\K0(\QGr kQ)$ is generated by the elements $[\pi^*e_ikQ(z)]$ for $i\in Q_0$ and $z\in \ZZ$.

In $\Gr kQ$, 
$$
(e_ikQ)_{\geq 1}=\bigoplus_{a\in Q_1, s(a)=i}akQ.
$$
Since $akQ\cong e_{t(a)}kQ(-1)$, 
$$
(e_ikQ)_{\geq 1}\cong \bigoplus_{a\in Q_1,s(a)=i}e_{t(a)}kQ(-1)=\bigoplus_{j\in Q_0} e_jkQ(-1)^{\oplus a_{ij}}
$$
where $a_{ij}$ is the number of arrows from $i$ to $j$. Hence, in $\QGr kQ$, we get
$$
\pi^*e_ikQ \cong \pi^*((e_ikQ)_{\geq 1}) \cong \bigoplus_{j\in Q_0} \pi^*e_jkQ(-1)^{\oplus a_{ij}}
$$
and this implies,
$$
\pi^*e_ikQ(z) \cong \bigoplus_{j\in Q_0} \pi^*e_jkQ(z-1)^{\oplus a_{ij}}
$$
for $z\in \ZZ$.

Write $\sP_i(z)$ for $\pi^*e_ikQ(z)$. The above isomorphism gives the following relation in $\K0(\QGr kQ)$:
\begin{eqnarray} \label{eqn.k0rel}
[\sP_i(z)]=\sum_{j\in Q_0}a_{ij}[\sP_j(z-1)].
\end{eqnarray} 
Let $\sP(z)=([\sP_1(z)],\ldots,[\sP_n(z)])^T$, the relation in (\ref{eqn.k0rel}) can be written more succinctly as
$$
\sP(z)=M\sP(z-1).
$$

Consider all the non cyclic vertices $Q_0\-\{\nu_1,\ldots,\nu_p\}$. Let $i$ be a non cyclic vertex and consider the set $\{p_1^i,\ldots,p_{l_i}^i\}$ of all paths that start at $i$ and do not pass through any cyclic vertex until ending at a cyclic vertex. More explicitly, $\{p_1^i,\ldots,p_{l_i}^i\}$ are all paths which satisfy:
\begin{enumerate}
\item[(i)] $s(p_j^i)=i$.
\item[(ii)] $t(p_j^i)$ is a cyclic vertex.
\item[(iii)] If $|p_j^i|>1$ and $a$ is an arrow in $p_j^i$ such that $p_j^i\neq qa$ for any path $q$, then $t(a)$ is not a cyclic vertex.
\end{enumerate}
For example, consider the quiver
$$
\UseComputerModernTips
\xymatrix{ {} & {1}\ar[r]^{a_3} \ar[drr]_{a_2} \ar[dd]_{a_1}& {\bullet} \ar[dr]^{a_4} & {} \\
           {\bullet}\ar@(ul,dl)[] \ar[ur] \ar[dr] & {} & {} & {\bullet} \ar@(ur,dr)[] \\
           {} & {\bullet} \ar@(dl,dr)[] \ar[urr] & {} & {} \\
           {} & {} & {} & {} & {} }
$$
The vertex $1$ is not cyclic and all paths which satisfy the above 3 conditions are $\{a_1,a_2,a_3a_4\}$.

For the non cyclic vertex $i$ consider the submodule of $e_ikQ$ given by 
$$
\bigoplus_{j=1}^{l_i}p_j^ikQ.
$$
Let $m=\max\{|p_1^i|,\ldots,|p_{l_i}^i|\}$ where $|p_j^i|$ is the length of the path $p_j^i$. Then every path in $Q$ starting at $i$ of length at least $m$ has the form $p_j^iq$ for some path $q$. Hence, 
$$
(e_ikQ)_{\geq m} \subseteq \bigoplus_{j=1}^{l_i}p_j^ikQ \subseteq e_ikQ. 
$$
As $e_ikQ/(e_ikQ)_{\geq m}$ has finite dimension, $e_ikQ/\bigoplus_{j=1}^{l_i}p_j^ikQ$ has finite dimension. Therefore, $\pi^*e_ikQ\cong \bigoplus_{j=1}^{l_i}\pi^*p_j^ikQ$ in $\QGr kQ$.

Since $p_j^ikQ\cong e_{t(p_j^i)}kQ(-|p_j^i|)$, we get
$$
\pi^*e_ikQ\cong \bigoplus_{j=1}^{l_i}\pi^*e_{t(p_j^i)}kQ(-|p_j^i|).
$$
in $\QGr kQ$. By shifting we get, for any $z\in \ZZ$, an isomorphism in $\QGr kQ$ 
$$
\pi^*e_ikQ(z)\cong \bigoplus_{j=1}^{l_i}\pi^*e_{t(p_j^i)}kQ(z-|p_j^i|).
$$
Again, by Proposition 3.2 of \cite{Sm1}, given any finitely generated projective $\sM \in \QGr kQ$, there is an isomorphism
$$
\sM \cong \bigoplus_{i\in Q_0}\left( \bigoplus_{j\in J_i} \pi^*e_ikQ(-n_{ij})^{m_{ij}}\right).
$$
However, for every non cyclic vertex $i$, the summands $\pi^*e_ikQ(-n_{ij})$ will be isomorphic to a sum of shifts of the projectives $\{\pi^*e_{\nu_l}kQ\}$ for cyclic vertices $\nu_l$. Hence, given any finitely generated projective $\sM \in \QGr kQ$, there is an isomorphism
$$
\sM \cong \bigoplus_{i=1}^{p}\left( \bigoplus_{j\in J_i} \pi^*e_{\nu_i}kQ(-n_{ij})^{m_{ij}}\right).
$$ 
Therefore, the group $\K0(\QGr kQ)$ is generated by all the elements 
$$
\{[\pi^*e_{\nu_i}kQ(z)]=[\sP_{\nu_i}(z)]\;|\; 0\leq i\leq p, z\in \ZZ\}.
$$   

The relation (\ref{eqn.k0rel}) requires using the elements $[\sP_i(z)]$ for all vertices $i$, not just the cyclic vertices. However, as $NR=RM$ we get
$$
R\sP(z)=RM\sP(z-1)=NR\sP(z-1).
$$
Now $R_{\nu_i}\sP(z)$ is the $\nu_i$-th entry of $M\sP(z)$ which is just $[\sP_{\nu_i}(z+1)]$, and hence, 
$$
R\sP(z)=([\sP_{\nu_1}(z+1)],\ldots,[\sP_{\nu_p}(z+1)])^T.
$$
If we let $\sP'(z)=([\sP_{\nu_1}(z)],\ldots,[\sP_{\nu_p}(z)])^T$ then we get a way to relate the $[\sP_{\nu_i}(z)]$ with the $[\sP_{\nu_i}(z-1)]$ with out using the other elements $[\sP_i(z-1)]$ where $i$ is a non cyclic vertex, namely:
\begin{eqnarray*} 
\sP'(z+1)=N\sP'(z).
\end{eqnarray*}
As $N$ is an invertible matrix over $\ZZ$, we also get $\sP'(z)=N^{-1}\sP'(z+1)$. It follows by induction that
\begin{eqnarray} \label{eqn.k0rel2}
\sP'(z)=N^{z}\sP'(0)
\end{eqnarray}
for all $z\in \ZZ$. As $\sP'(0)=([\sP_{\nu_1}],\ldots,[\sP_{\nu_p}])^T$ we find that
$$
\{[\sP_{\nu_1}],\ldots,[\sP_{\nu_p}]\}
$$
is a set of generators for $\K0(\QGr kQ)$. Since $\K0(\QGr kQ)$ is a free abelian group of rank $p$ and the elements $\{[\sP_{\nu_1}],\ldots,[\sP_{\nu_p}]\}$ generate $\K0(\QGr kQ)$ they are a $\ZZ$-basis for $\K0(\QGr kQ)$. Thus, we have established the following proposition:

\begin{Prop}
$\K0(\QGr kQ)$ is a free abelian group with basis 
$$
\{[\sP_{\nu_1}],\ldots,[\sP_{\nu_p}]\}. 
$$
Moreover, every finitely generated projective $\sM \in \QGr kQ$ is isomorphic to a sum of shifts of the objects $\{\sP_{\nu_1},\ldots,\sP_{\nu_p}\}$. 
\end{Prop}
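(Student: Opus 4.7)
The plan is to prove the two assertions (basis statement and structure of finitely generated projectives) by combining Proposition \ref{prop.rankp} with the path-reduction trick for non-cyclic vertices and the matrix identity $NR=RM$. I would work in the normalized setting established above: after passing to a suitable Veronese $kQ^{(Ln)}$, the quiver has every simple cycle a loop, the vertices are labeled so that $M$ is upper triangular with diagonal in $\{0,1\}$, the cyclic vertices occur at positions $\nu_1<\cdots<\nu_p$, and for cyclic $i,j$ there is an arrow $i\to j$ iff there is a path $i\to j$.

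First I would handle the structure of finitely generated projectives. By Proposition 3.2 of \cite{Sm1}, every finitely generated projective $\sM\in\QGr kQ$ is a finite direct sum of shifts $\pi^*e_ikQ(-n)$ for $i\in Q_0$. I need to eliminate the non-cyclic vertices. Fix a non-cyclic $i$ and list the paths $p_1^i,\ldots,p_{l_i}^i$ starting at $i$ and terminating at their first cyclic vertex (as in the discussion above). If $m=\max_j|p_j^i|$, then every path of length $\ge m$ starting at $i$ factors through some $p_j^i$, so
$$
(e_ikQ)_{\ge m}\subseteq \bigoplus_{j=1}^{l_i}p_j^ikQ\subseteq e_ikQ,
$$
and the outer quotient is finite-dimensional, hence torsion. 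Applying $\pi^*$ and using $p_j^ikQ\cong e_{t(p_j^i)}kQ(-|p_j^i|)$ yields
$$
\pi^*e_ikQ\cong \bigoplus_{j=1}^{l_i}\pi^*e_{t(p_j^i)}kQ\bigl(-|p_j^i|\bigr),
$$
reducing $\sM$ to a sum of shifts of $\sP_{\nu_1},\ldots,\sP_{\nu_p}$. This establishes the second assertion and shows $\K0(\QGr kQ)$ is generated by $\{[\sP_{\nu_i}(z)]:1\le i\le p,\ z\in\ZZ\}$.

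Next I would eliminate the shifts. From the filtration $0\to (e_ikQ)_{\ge 1}\to e_ikQ \to e_ikQ/(e_ikQ)_{\ge 1}\to 0$ and the decomposition $(e_ikQ)_{\ge 1}\cong\bigoplus_j e_jkQ(-1)^{\oplus a_{ij}}$, one derives the relation
$$
[\sP_i(z)]=\sum_{j\in Q_0}a_{ij}[\sP_j(z-1)]
$$
in $\K0(\QGr kQ)$, i.e.\ $\sP(z)=M\sP(z-1)$ where $\sP(z)$ is the column vector of the $[\sP_i(z)]$. Multiplying on the left by the $p\times n$ matrix $R$ whose rows are $R_{\nu_1},\ldots,R_{\nu_p}$ and using the identity $NR=RM$ previously established gives
$$
R\sP(z)=NR\sP(z-1),\qquad R\sP(z)=\sP'(z+1),
$$
so $\sP'(z+1)=N\sP'(z)$. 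Because $N$ is unipotent upper triangular over $\ZZ$, it is invertible in $\GL_p(\ZZ)$, hence induction yields $\sP'(z)=N^z\sP'(0)$ for all $z\in\ZZ$. Consequently every $[\sP_{\nu_i}(z)]$ lies in the $\ZZ$-span of $[\sP_{\nu_1}],\ldots,[\sP_{\nu_p}]$, so these $p$ elements generate $\K0(\QGr kQ)$.

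Finally, Proposition \ref{prop.rankp} tells us $\K0(\QGr kQ)$ is a free abelian group of rank exactly $p$. Any generating set of a free abelian group of rank $p$ consisting of exactly $p$ elements is automatically a $\ZZ$-basis, completing the first assertion. The main obstacle, already overcome in the preceding discussion, is justifying that $N$ is invertible over $\ZZ$ (not merely over $\QQ$): this rests on the careful vertex labeling making $M$ upper triangular and on the observation that $R_{\nu_i}M$, expanded in the basis $\{R_{\nu_j}\}$, has first nonzero coefficient equal to $1$ in position $i$, forcing $N$ to be unipotent upper triangular. Without this integrality, the $[\sP_{\nu_i}(z)]$ would only span a $\QQ$-subspace and the basis statement would fail.
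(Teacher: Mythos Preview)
Your proposal is correct and follows essentially the same argument as the paper: reduce arbitrary finitely generated projectives to sums of shifts of the $\sP_{\nu_i}$ via the path-to-first-cyclic-vertex decomposition, then use the identity $NR=RM$ to obtain $\sP'(z+1)=N\sP'(z)$ and the invertibility of $N$ over $\ZZ$ to eliminate shifts, and finally invoke Proposition \ref{prop.rankp} to conclude that the $p$ generators form a basis. The only difference is a harmless reordering of steps (you do the non-cyclic-vertex reduction before deriving the relation $\sP(z)=M\sP(z-1)$, whereas the paper does the reverse).
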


Let $\psi:\K0(\QGr kQ) \to \ZZ^p$ be the isomorphism $\psi([\sP_{\nu_i}])=e_i$, where $e_i$ is the usual $i$-th basis vector in $\ZZ^p$. As was already noted, every element in $\QGr kQ$ is isomorphic to a direct sum of $\sP_{\nu_i}(z)$ and these are related by equation (\ref{eqn.k0rel2}). Hence, to understand the positive cone $\K0^+(\QGr kQ)$, we need to understand the matrices $N^z$ for $z\in \ZZ$.   

The matrix $N$ was defined so that $NR=RM$ where $R$ is the $p\times n$ matrix whose $i$-th row is $R_{\nu_i}$ the $\nu_i$-th row of the matrix $M$. The elements $b_{ij}$ of $N$ satisfy 
\begin{equation} \label{eqn.bij}
R_{\nu_i}(2)=\sum_{j=1}^{p}b_{ij}R_{\nu_j}.	
\end{equation}
Moreover, $b_{ij}=0$ if $j<i$, and $b_{ii}=1$. Using this we can recursively solve for the $b_{ij}$ in terms of the $a_{ij}$. Comparing the $k$-th entries of both sides of \ref{eqn.bij} yields
$$
a_{\nu_i,k}(2)=\sum_{j=1}^{p} b_{ij}a_{\nu_jk}=a_{\nu_ik}+\sum_{j=i+1}^{p}b_{ij}a_{\nu_jk}.
$$ 
If we pick $k=\nu_{i+1}$, then $a_{\nu_j,\nu_{i+1}}=0$ for $j>i+1$ and hence
$$
a_{\nu_i,\nu_{i+1}}(2)=a_{\nu_i,\nu_{i+1}}+b_{i,i+1}a_{\nu_{i+1},\nu_{i+1}}
$$
giving
$$
b_{i,i+1}=a_{\nu_i,\nu_{i+1}}(2)-a_{\nu_i,\nu_{i+1}}.
$$ 
More generally, picking $k=\nu_{i+l}$ gives
$$
a_{\nu_i,\nu_{i+l}}(2)=\sum_{j=1}^{p}b_{ij}a_{\nu_j,\nu_{i+l}}.
$$
Since $a_{\nu_j,\nu_{i+l}}=0$ if $j>i+l$, $b_{i,j}=0$ if $j<i$ and $b_{ii}=1$, the sum reduces to
$$
a_{\nu_i,\nu_{i+l}}(2)=a_{\nu_i,\nu_{i+l}}+\sum_{j=i+1}^{i+l}b_{i,j}a_{\nu_{j},\nu_{i+l}}=a_{\nu_i,\nu_{i+l}}+\sum_{j=1}^{l}b_{i,i+j}a_{\nu_{i+j},\nu_{i+l}}.
$$
Hence, because $a_{\nu_{i+l},\nu_{i+l}}=1$, we can solve for $b_{i,i+l}$ to get
$$
b_{i,i+l}=a_{\nu_i,\nu_{i+l}}(2)-a_{\nu_i,\nu_{i+l}}-\sum_{j=1}^{l-1}b_{i,i+j}a_{\nu_{i+j},\nu_{i+l}}.
$$

\begin{Prop}
For the moment, let $a_{i,j}$ denote $a_{\nu_i,\nu_j}$. The elements $b_{i,i+l}$ of the matrix $N$ are given by $b_{i,i+1}=a_{i,i+1}(2)-a_{i,i+1}$ and for $l>1$,
\begin{eqnarray} \label{eqn.awful}
b_{i,i+l}=(a_{i,i+l}(2)-a_{i,i+l})+ \\
\sum_{d=1}^{l-1}\sum_{1\leq j_1<j_2<\cdots<j_d\leq l-1}{(-1)^d(a_{i,i+j_1}(2)-a_{i,i+j_1})a_{i+j_1,i+j_2}a_{i+j_2,i+j_3}\cdots a_{i+j_d,i+l}}. \nonumber
\end{eqnarray}
\end{Prop}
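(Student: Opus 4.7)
The plan is to prove the formula by induction on $l$, taking as input the recursion derived in the paragraph preceding the proposition, namely
\[
b_{i,i+l} \;=\; (a_{i,i+l}(2) - a_{i,i+l}) \;-\; \sum_{j=1}^{l-1} b_{i,i+j}\, a_{i+j,\,i+l}.
\]
To keep the bookkeeping readable I would introduce the abbreviation $c_{r,s} := a_{r,s}(2) - a_{r,s}$, so that the recursion reads $b_{i,i+l} = c_{i,i+l} - \sum_{j=1}^{l-1} b_{i,i+j}\, a_{i+j,i+l}$ and the target formula becomes
\[
b_{i,i+l} \;=\; c_{i,i+l} \;+\; \sum_{d=1}^{l-1}\sum_{1 \le j_1 < \cdots < j_d \le l-1}(-1)^d c_{i,i+j_1}\, a_{i+j_1,i+j_2}\cdots a_{i+j_d,\,i+l}.
\]

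The base case $l=1$ is immediate: both the outer sum and the recursion collapse to $b_{i,i+1} = c_{i,i+1}$. For the inductive step, I would substitute the inductive hypothesis for each $b_{i,i+j}$ with $j<l$ into the recursion. The $c_{i,i+j}$-piece of $b_{i,i+j}$, multiplied by $-a_{i+j,i+l}$ and summed over $j = 1,\ldots,l-1$, produces exactly the length-one chains $-c_{i,i+j_1} a_{i+j_1, i+l}$, which comprise the $d=1$ contribution to the target. The nested-sum piece of $b_{i,i+j}$ contributes, for each strictly increasing tuple $(k_1 < \cdots < k_d)$ with $k_d \le j-1$, the monomial $(-1)^d c_{i,i+k_1} a_{i+k_1,i+k_2} \cdots a_{i+k_d, i+j}$; multiplying by $-a_{i+j,i+l}$ and appending $j$ yields the longer tuple $(k_1 < \cdots < k_d < j)$ of length $d+1$ bounded by $l-1$, with the correct sign $(-1)^{d+1}$.

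The key combinatorial point is a bijection: every strictly increasing tuple $(j_1 < \cdots < j_{d'})$ in $\{1,\ldots,l-1\}$ of length $d' \ge 2$ is uniquely decomposed as its largest entry $j_{d'}$ together with the (possibly empty) preceding tuple $(j_1 < \cdots < j_{d'-1})$ strictly below $j_{d'}$. Through this bijection, combining the $d=1$ terms with those just produced for $d \ge 2$ reassembles precisely the right-hand side of the target formula. The only care points are the convention that the inner sum $\sum_{d=1}^{j-1}$ is empty when $j=1$, so that $b_{i,i+1}=c_{i,i+1}$ feeds cleanly into the $d=1$ tier at step $l$, and the sign accounting $(-1)^d \cdot (-1) = (-1)^{d+1}$. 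These are bookkeeping rather than substantive obstacles; once the indexing conventions are fixed the induction closes without requiring any further input, which is why the main (and really only) difficulty in the statement is typographical rather than mathematical.
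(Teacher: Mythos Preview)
Your proof is correct and follows essentially the same inductive argument as the paper: substitute the inductive hypothesis into the recursion $b_{i,i+l} = (a_{i,i+l}(2)-a_{i,i+l}) - \sum_{j=1}^{l-1} b_{i,i+j}\,a_{i+j,i+l}$, then reorganize the resulting double sum by appending $j$ as the new largest index $j_{d+1}$, which is exactly the index manipulation the paper performs. Your phrasing via the bijection on increasing tuples and the abbreviation $c_{r,s}$ makes the bookkeeping cleaner, but the content is identical.
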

\begin{proof}
The case $l=1$ was already observed. Suppose the formula holds for all $1\leq j<l$ and $i+l\leq p$. $b_{i,i+l}$ is related to $b_{i,i+j}$ for $1\leq j<l$ by the relation
\begin{eqnarray*}
b_{i,i+l}=a_{i,i+l}(2)-a_{i,i+l}-\sum_{j=1}^{l-1}b_{i,i+j}a_{i+j,i+l}. 
\end{eqnarray*}
By the inductive hypothesis, each $b_{i,i+j}$ term has the form \ref{eqn.awful}. Therefore,
\begin{eqnarray*}
b_{i,i+j}a_{i+j,i+l}=(a_{i,i+j}(2)-a_{i,i+j})a_{i+j,i+l}+ \\
\sum_{d=1}^{j-1}\sum_{1\leq j_1<\cdots<j_d\leq j-1}{(-1)^{d}(a_{i,i+j_1}(2)-a_{i,i+j_1})a_{i+j_1,i+j_2}\cdots a_{i+j_d,i+j}}a_{i+j,i+l}. 
\end{eqnarray*}
Hence,
\begin{eqnarray*}
\sum_{j=1}^{l-1}{b_{i,i+j}a_{i+j,i+l}}=\sum_{j=1}^{l-1}{(a_{i,i+j}(2)-a_{i,i+j})a_{i+j,i+l}}+\\
\sum_{j=2}^{l-1}\sum_{d=1}^{j-1}\sum_{1\leq j_1<\cdots<j_d\leq j-1}{(-1)^d(a_{i,i+j_1}(2)-a_{i,i+j_1})a_{i+j_1,i+j_2}\cdots a_{i+j_d,i+j}a_{i+j,i+l}}.
\end{eqnarray*}
Using the fact that $\sum_{j=2}^{l-1}\sum_{d=1}^{j-1}=\sum_{d=1}^{l-2}\sum_{j=d+1}^{l-1}$,  changing some summation variables and inputting a few extra minus signs we can rewrite the above equation as 
\begin{eqnarray*}
\sum_{j=1}^{l-1}{b_{i,i+j}a_{i+j,i+l}}=-\sum_{j_1=1}^{l-1}{(-1)^1(a_{i,i+j_1}(2)-a_{i,i+j_1})a_{i+j_1,i+l}}-\\
\sum_{d=1}^{l-2}\sum_{j=d+1}^{l-1}\sum_{1\leq j_1<\cdots<j_d\leq j-1}{(-1)^{d+1}(a_{i,i+j_1}(2)-a_{i,i+j_1})a_{i+j_1,i+j_2}\cdots a_{i+j_d,i+j}a_{i+j,i+l}}
\end{eqnarray*}
Thinking of $j=j_{d+1}$ we can write the sum $\sum_{j=d+1}^{l-1}\sum_{1\leq j_1<\cdots<j_d\leq j-1}$ as $\sum_{1\leq j_1<\cdots<j_d<j_{d+1}\leq l-1}$. Hence,
\begin{eqnarray*}
\sum_{j=1}^{l-1}{b_{i,i+j}a_{i+j,i+l}}=-\sum_{j_1=1}^{l-1}{(-1)^1(a_{i,i+j_1}(2)-a_{i,i+j_1})a_{i+j_1,i+l}}-\\
\sum_{d=1}^{l-2}\sum_{1\leq j_1<\cdots<j_{d+1}\leq l-1}{(-1)^{d+1}(a_{i,i+j_1}(2)-a_{i,i+j_1})a_{i+j_1,i+j_2}\cdots a_{i+j_d,i+j_{d+1}}a_{i+j_{d+1},i+l}} 
\end{eqnarray*}
We can rewrite the equation above as
\begin{eqnarray*}
\sum_{j=1}^{l-1}{b_{i,i+j}a_{i+j,i+l}}=-\sum_{j_1=1}^{l-1}{(-1)^1(a_{i,i+j_1}(2)-a_{i,i+j_1})a_{i+j_1,i+l}}-\\
\sum_{d=2}^{l-1}\sum_{1\leq j_1<\cdots<j_d\leq l-1}{(-1)^{d}(a_{i,i+j_1}(2)-a_{i,i+j_1})a_{i+j_1,i+j_2}\cdots a_{i+j_{d-1},i+j_d}a_{i+j_d,i+l}} \\
=-\sum_{d=1}^{l-1}\sum_{1\leq j_1<\cdots<j_d\leq l-1}{(-1)^{d}(a_{i,i+j_1}(2)-a_{i,i+j_1})a_{i+j_1,i+j_2}\cdots a_{i+j_{d-1},i+j_d}a_{i+j_d,i+l}}
\end{eqnarray*}
Therefore,
\begin{eqnarray*}
b_{i,i+l}=a_{i,i+l}(2)-a_{i,i+l}-\sum_{j=1}^{l-1}{b_{i,i+j}a_{i+j,i+l}}=a_{i,i+l}(2)-a_{i,i+l}+ \\
\sum_{d=1}^{l-1}\sum_{1\leq j_1<\cdots<j_d\leq l-1}{(-1)^{d}(a_{i,i+j_1}(2)-a_{i,i+j_1})a_{i+j_1,i+j_2}\cdots a_{i+j_d,i+l}}
\end{eqnarray*}
showing the formula holds for $b_{i,i+l}$. 
\end{proof}

\begin{Prop} \label{prop.minnu}
If $\nu_i$ and $\nu_{i+l}$ are incomparable, then $b_{i,i+l}=0$. If $\nu_{i+l}$ is minimal over $\nu_i$ then
$$
b_{i,i+l}=a_{\nu_i,\nu_{i+l}}(2)-a_{\nu_i,\nu_{i+l}}>0.
$$
\end{Prop}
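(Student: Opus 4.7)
The plan is to read off both conclusions directly from the explicit formula (\ref{eqn.awful}) for $b_{i,i+l}$, using the reductions set up at the start of Section~\ref{sec.proofk0}. Those reductions ensure that every simple cycle is a loop (so $a_{\nu_k,\nu_k}=1$ for each cyclic vertex) and that between cyclic vertices, a path of any positive length exists if and only if an arrow exists. In particular $a_{\nu_r,\nu_s}(2)>0 \iff a_{\nu_r,\nu_s}>0$, and each of these holds iff $\nu_r\prec \nu_s$ or $r=s$. This ``nonzero matrix entry $\iff$ poset relation'' correspondence is what drives both parts of the argument.

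For the first claim, suppose $\nu_i$ and $\nu_{i+l}$ are incomparable. Since $\nu_i\not\prec \nu_{i+l}$, the leading term $a_{\nu_i,\nu_{i+l}}(2)-a_{\nu_i,\nu_{i+l}}$ is just $0-0$. For any remaining summand to contribute, each factor $a_{i+j_k,i+j_{k+1}}$ and the difference $a_{i,i+j_1}(2)-a_{i,i+j_1}$ would have to be nonzero; translating each of these into the poset via the correspondence above would produce a strict chain $\nu_i\prec \nu_{i+j_1}\prec \cdots \prec \nu_{i+j_d}\prec \nu_{i+l}$, and transitivity of $\preceq$ would contradict incomparability. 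So every summand vanishes and $b_{i,i+l}=0$.

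For the second claim, the very same chain argument shows that any summand with $d\ge 1$ would force an intermediate cyclic vertex $\nu_{i+j_1}$ strictly between $\nu_i$ and $\nu_{i+l}$, which is ruled out by minimality of $\nu_{i+l}$ over $\nu_i$. The formula therefore collapses to $b_{i,i+l}=a_{\nu_i,\nu_{i+l}}(2)-a_{\nu_i,\nu_{i+l}}$. To secure strict positivity I would count length-$2$ paths from $\nu_i$ to $\nu_{i+l}$ directly: setting $a:=a_{\nu_i,\nu_{i+l}}\ge 1$, the loop at $\nu_i$ followed by an arrow $\nu_i\to \nu_{i+l}$ contributes $a$ paths, an arrow followed by the loop at $\nu_{i+l}$ contributes another $a$, intermediate cyclic vertices are excluded by minimality, and any contribution from non-cyclic intermediates is non-negative. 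Thus $a_{\nu_i,\nu_{i+l}}(2)\ge 2a$, giving $b_{i,i+l}\ge a\ge 1>0$.

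The steps are essentially bookkeeping and I do not anticipate a serious obstacle. The one place where care is needed is the biconditional that, between two cyclic vertices, a path of length $2$ exists iff an arrow does: this is what makes the ``chain of arrows'' reading of the summands valid in the incomparable case, and it is also precisely what forces the inequality $a_{\nu_i,\nu_{i+l}}(2)\ge 2a$ in the minimal case, by ruling out any ``accidental'' length-$2$ paths while still guaranteeing the mandatory contributions coming from the loops at the two endpoints.
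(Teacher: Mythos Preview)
Your proposal is correct and follows essentially the same approach as the paper: both arguments read the conclusion off the explicit formula~(\ref{eqn.awful}), observe that a nonzero summand would force a strict chain of cyclic vertices contradicting incomparability or minimality, and establish positivity in the minimal case by the same count $a_{\nu_i,\nu_{i+l}}(2)\ge 2a_{\nu_i,\nu_{i+l}}$ coming from the loops at the two endpoints.
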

\begin{proof}
Recall that we are working with a quiver $Q$ in which every simple cycle is a loop, and for which there is a path between two distinct cyclic vertices if and only if there is an arrow between them. Hence, $\nu_i \preceq \nu_j$ if and only if $a_{i,i+l}\neq 0$. Recall we are writing $a_{i,i+l}$ for $a_{\nu_i,\nu_{i+l}}$. 

If $\nu_i$ and $\nu_{i+l}$ are incomparable, then $a_{i,i+l}(m)=0$ for all $m$ as there are no paths from $\nu_i$ to $\nu_{i+l}$.

Consider a typical term in the sum in equation \ref{eqn.awful}:
$$
a_{i,i+j_1}(n)a_{i+j_1,i+j_2}\cdots a_{i+j_d,i+l}
$$
where $n$ is either $1$ or $2$. If this is not zero then there is a path of the form
$$
\UseComputerModernTips
\xymatrix{ {\nu_i}\ar[r] & {\nu_{i+j_1}}\ar[r] & {\cdots}\ar[r] & {\nu_{i+j_d}} \ar[r]& {\nu_{i+l}} }
$$
which shows $\nu_i\prec \nu_{i+j_1}\prec \cdots \prec \nu_{i+l}$ which is a contradiction. Hence, every term in the sum in equation \ref{eqn.awful} is zero. As $a_{i,i+l}(m)=0$ for all $m$ we deduce from equation \ref{eqn.awful} that $b_{i,i+l}=0$. 

Suppose $\nu_{i+l}$ is minimal over $\nu_i$. Since $\nu_i \preceq \nu_{i+l}$, there is an arrow from $\nu_i$ to $\nu_{i+l}$. For every such arrow $a$, we get two paths from $\nu_i$ to $\nu_{i+l}$ of length $2$ by either first traversing the loop at $\nu_i$ then followed by $a$ or traversing $a$ followed by the loop at $\nu_{i+l}$. Hence, $a_{\nu_i,\nu_{i+l}}(2)\geq 2a_{\nu_i,\nu_{i+l}}$ which implies $a_{\nu_i,\nu_{i+l}}(2)-a_{\nu_i,\nu_{i+l}}\geq a_{\nu_i,\nu_{i+1}}>0$.

Again, if any term $a_{i,i+j_1}(n)a_{i+j_1,i+j_2}\cdots a_{i+j_d,i+l}$ in the sum of equation \ref{eqn.awful} is not zero, then there must be a path from $\nu_i$ to $\nu_{i+l}$ which passes through another cyclic vertex. This is a contradiction to the assumption that $\nu_{i+l}$ is minimal over $\nu_i$. Hence, every term in the sum is zero and we are left with
$$
b_{i,i+l}=a_{\nu_i,\nu_{i+l}}(2)-a_{\nu_i,\nu_{i+l}}>0.
$$
\end{proof}

The binomial coefficients ${m\choose i}$, for a fixed $i$, determine polynomial functions with rational coefficients:
$$
{m\choose i}=\frac{m(m-1)\cdots (m-i+1)}{i!}.
$$
Let $f_i(m)$ denote this polynomial function. Then $f_i$ has degree $i$ and the roots are precisely $\{0,1,\ldots,i-1\}$.

Consider the $n\times n$ Jordan block matrix 
$$
J(1,n)=\bmat
1 & 1 & 0 &\cdots & 0 \\
0 & 1 & 1 & \cdots & 0 \\
  & & & \ddots & \\
0 & 0 & 0 & \cdots & 1	
\emat.
$$
For any function $g$ analytic around $1$, the $ij$-th entry of $g(J(1,n))$ is $0$ if $i>j$ and $g^{(j-i)}(1)/(j-i)!$ if $i\leq j$. In particular, if $g(t)=t^z$ for $z\in \ZZ$, then 
$$
\frac{1}{i!}\left(\frac{d}{dt}\right)^i t^z|_{t=1}=\frac{z(z-1)\cdots (z-i+1)}{i!}=f_i(z).
$$
Therefore, for all $z\in \ZZ$, $(J(1,n)^z)_{ij}=0$ if $i>j$ while
$$
(J(1,n)^z)_{ij}=f_{j-i}(z)
$$
for $i\leq j$.

For the matrix $N=(b_{ij})$, all the eigenvalues of $N$ are $1$ and $N$ is upper triangular. Hence, we can put $N$ into Jordan canonical form over $\QQ$. Therefore, there is an invertible matrix $S\in M_p(\QQ)$ such that $N=S^{-1}\left(J(1,n_1)\oplus \cdots \oplus J(1,n_l)\right)S$ where $n_1+\cdots+n_l=p$. Therefore,
$$
N^z=S^{-1}\left(J(1,n_1)^z\oplus \cdots \oplus J(1,n_l)^z\right)S.
$$
Since the entries of $J(1,n_i)^z$ are rational polynomial functions of $z$ and the entries of $S$ and $S^{-1}$ are rational, we determine that the entries, $b_{ij}(z)$, of $N^z$ are rational polynomials of $z$.

The matrix $N$ can be written as $N=I+U$ where $I$ is the identity matrix and $U$ is a strictly upper triangular matrix. Since $U$ and $I$ commute, $U^m=0$ for $m\geq p$ and ${m\choose l}=0$ if $m<l$, we can write
$$
N^m=(I+U)^m=\sum_{l=0}^{m}{m\choose l}U^l=\sum_{l=0}^{p-1}{m\choose l}U^l.
$$
If we let $u_{ij}(l)$ be the $ij$-th entry of $U^l$ then we determine
$$
b_{ij}(m)=\sum_{l=1}^{p-1}{m\choose l}u_{ij}(l)=\sum_{l=0}^{p-1}f_l(m)u_{ij}(l)	
$$ 
which is a rational polynomial function in $m$. Since $b_{ij}(z)$ and $\sum_{l=0}^{p-1}f_l(z)u_{ij}(l)$ are rational polynomials and $b_{ij}(m)=\sum_{l=0}^{p-1}f_l(m)u_{ij}(l)$ for all $m\in \NN$, 
$$
b_{ij}(z)=\sum_{l=0}^{p-1}f_{l}(z)u_{ij}(l).
$$
As $N$ is upper triangular we know $b_{ij}(z)=0$ if $i>j$. We can also see that $b_{ij}(z)$ is a polynomial of degree at most $j-i$ since $u_{ij}(l)=0$ if $l>j-i$.

\begin{Prop} \label{prop.bij(z)}
If $\nu_i$ and $\nu_{i+l}$ are incomparable, then $b_{i,i+l}(z)=0$. If $\nu_{i+l}$ is minimal over $\nu_i$, then $b_{i,i+l}(z)=b_{i,i+l}\cdot z=(a_{\nu_i,\nu_{i+l}}(2)-a_{\nu_i,\nu_{i+l}})z$.
\end{Prop}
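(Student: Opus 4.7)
The plan is to exploit the polynomial expansion
$$b_{ij}(z) = \sum_{s=0}^{p-1} f_s(z)\, u_{ij}(s)$$
derived just before the statement, where $U := N - I$ is strictly upper triangular and $u_{ij}(s) := (U^s)_{ij}$. Because $U$ has zero diagonal and off-diagonal entries $b_{rt}$ (for $r<t$), a direct unpacking of matrix multiplication gives
$$u_{ij}(s) = \sum_{i = k_0 < k_1 < \cdots < k_s = j} b_{k_0 k_1}\, b_{k_1 k_2}\, \cdots\, b_{k_{s-1} k_s}.$$

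The central observation, which is the contrapositive of the first half of Proposition \ref{prop.minnu}, is that a factor $b_{rt}$ with $r<t$ can be nonzero only when $\nu_r$ and $\nu_t$ are comparable in $E_Q$. Combined with the labelling convention (arrows of $Q$ go from smaller to larger integer index, so $\nu_r < \nu_t$ as integers rules out $\nu_t \prec \nu_r$), comparability together with $r<t$ forces $\nu_r \prec \nu_t$. Hence every nonzero summand in the formula above produces a strict chain $\nu_i = \nu_{k_0} \prec \nu_{k_1} \prec \cdots \prec \nu_{k_s} = \nu_j$ of length $s$ in the poset $E_Q$.

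From here both claims follow immediately. If $\nu_i$ and $\nu_{i+l}$ are incomparable, no such chain exists for any $s \geq 1$, and $u_{i,i+l}(0) = 0$ since $l \geq 1$; hence $b_{i,i+l}(z) \equiv 0$. If $\nu_{i+l}$ is minimal over $\nu_i$, then a chain of length $s \geq 2$ would produce an intermediate cyclic vertex $\nu_{k_1}$ with $\nu_i \prec \nu_{k_1} \prec \nu_{i+l}$, contradicting minimality; therefore $u_{i,i+l}(s) = 0$ for $s \geq 2$, only the $s=1$ term survives, and
$$b_{i,i+l}(z) = f_1(z)\, u_{i,i+l}(1) = z\, b_{i,i+l} = z\bigl(a_{\nu_i,\nu_{i+l}}(2) - a_{\nu_i,\nu_{i+l}}\bigr)$$
by Proposition \ref{prop.minnu}.

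There is no serious obstacle here: the nontrivial content has been pushed into Proposition \ref{prop.minnu} and into the derivation of the polynomial formula for $b_{ij}(z)$ via Jordan form. The only point requiring care is the combinatorial translation of entries of $U^s$ into strict chains, together with the labelling convention that upgrades mere comparability to the directed relation $\prec$; once these are in hand, the argument is a one-line application of the chain-counting formula.
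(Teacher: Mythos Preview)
Your argument is correct and is essentially identical to the paper's own proof: both unpack $u_{i,i+l}(s)=(U^s)_{i,i+l}$ as a sum over strictly increasing index sequences $i=k_0<k_1<\cdots<k_s=i+l$ of products $b_{k_0k_1}\cdots b_{k_{s-1}k_s}$, invoke Proposition~\ref{prop.minnu} to see that any nonzero such product forces a strict chain $\nu_i\prec\nu_{k_1}\prec\cdots\prec\nu_{i+l}$ in $E_Q$, and then read off the two cases. Your remark that the integer ordering of the $\nu_r$ upgrades comparability to $\nu_r\prec\nu_t$ is the only point you make explicit that the paper leaves implicit.
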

\begin{proof}
If $\nu_{i+l}$ is minimal over $\nu_i$ then we know $b_{i,i+l}=a_{\nu_i,\nu_{i+l}}(2)-a_{\nu_i,\nu_{i+l}}>0$ by Proposition \ref{prop.minnu}.

The elements $u_{ij}(m)$, for $m\geq 2$, are given by
\begin{eqnarray*}
u_{ij}(m)=\sum_{k_1=1}^{p}\cdots \sum_{k_{m-1}=1}^{p}u_{i,k_1}u_{k_1,k_2}\cdots u_{k_{m-1},j}.
\end{eqnarray*}
The entries of $U$ satisfy $u_{ij}=b_{ij}$ if $i<j$ while $u_{ij}=0$ if $i\geq j$. Hence, in a typical term $u_{i,k_1}u_{k_1,k_2}\cdots u_{k_{m-1},j}$, if $k_1\leq i$, then $u_{i,k_1}=0$, if $k_{m-1}\geq j$ then $u_{k_{m-1},j}=0$ or if $k_{i+1}\leq k_{i}$ then $u_{k_i,k_{i+1}}=0$. Hence, the only time the term is not necessarily zero is when $i<k_1<\cdots<k_{m-1}<j$ in which case
$$
u_{i,k_1}u_{k_1,k_2}\cdots u_{k_{m-1},j}=b_{i,k_1}b_{k_1,k_2}\cdots b_{k_{m-1},j}.
$$  

If $\nu_i$ and $\nu_{i+l}$ are incomparable then for any $i<k_1<\cdots <k_{m-1}<i+l$, the term
$$
b_{i,k_1}b_{k_1,k_2}\cdots b_{k_{m-1},j}
$$
must be zero. If not, then by Proposition \ref{prop.minnu}, it follows that $\nu_i\prec \nu_{k_1}\prec \cdots \prec \nu_{k_{m-1}} \prec \nu_{i+l}$ which is a contradiction as $\nu_i \not\prec \nu_{i+l}$. Hence, $u_{i,i+l}(m)=0$ for $m\geq 2$. However, $u_{i,i+l}(1)=u_{i,i+l}=b_{i,i+l}=0$ and $u_{i,i+l}(0)=0$ also holds. Hence,
$$
b_{i,i+l}(z)=\sum_{j=0}^{p-1}f_{j}(z)u_{i,i+l}(j)=0.
$$

In the case where $\nu_{i+l}$ is minimal over $\nu_i$, we have 
$$
b_{i,k_1}b_{k_1,k_2}\cdots b_{k_{m-1},i+l}=0
$$
for otherwise, $b_{i,k_1},b_{k_i,k_{i+1}},b_{k_{m-1},i+l}\neq 0$ implies $\nu_i\prec \nu_{k_1}\prec \cdots \prec \nu_{i+l}$ which contradicts the minimality of $\nu_{i+l}$ over $\nu_i$. Thus, $u_{i,i+l}(m)=0$ for $m\geq 2$. Therefore,
$$
b_{i,i+l}(z)=\sum_{m=0}^{p-1}f_m(z)u_{i,i+l}(m)=u_{i,i+l}(0)+f_1(z)u_{i,i+l}(1)=zb_{i,i+l}.
$$
\end{proof}

Recall, the group $\K0(\QGr kQ)$ is free with basis 
$$
\{[\sP_{\nu_1}],\ldots,[\sP_{\nu_p}]\}
$$
and there are the relations
$$
\sP'(z)=N^z\sP'(0)
$$ 
where $\sP'(z)=([\sP_{\nu_1}(z)],\ldots,[\sP_{\nu_p}(z)])^T$. Hence, in $\K0(\QGr kQ)$,
$$
[\sP_{\nu_i}(z)]=\sum_{j=1}^{p}b_{ij}(z)[\sP_{\nu_j}].
$$

It was already observed that every finitely generated projective $\sM \in \QGr kQ$ is isomorphic to a direct sum of objects of the form $\sP_{\nu_i}(z)=\pi^*e_{\nu_i}kQ(z)$. Hence, the positive cone of $\K0(\QGr kQ)$ is generated as a monoid by the elements $\{[\sP_{\nu_i}(z)]\;|\; z\in \ZZ, i=1,\ldots,p\}$.

We have an isomorphism $\psi:\K0(\QGr kQ)\to \ZZ^p$ which sends $[\sP_{\nu_i}]$ to $e_i$. It will now be shown that under this isomorphism, $\psi(\K0(\QGr kQ)^+)=\Delta(E_Q)$.

Recall that $\Delta(E_Q)$ is the submonoid of $\ZZ^p$ generated by elements of the form 
$$
e_i+\sum_{i\prec k}z_ke_k
$$
where $z_k\in \ZZ$. Since $b_{ij}(z)=0$ if $i>j$, $b_{ii}(z)=1$ and $b_{ij}(z)=0$ if $\nu_i$ and $\nu_j$ are incomparable, we determine
$$
\psi([\sP_{\nu_i}(z)])=\sum_{j=1}^{p}b_{ij}(z)\psi([\sP_{\nu_j}])=e_i+\sum_{i\prec j}b_{ij}(z)e_{j}\in \Delta(E_Q).
$$
Hence, $\psi(\K0^+(\QGr kQ))\subseteq \Delta(E_Q)$.

Since $E_Q$ is a finite poset, it has maximal elements. Let $L_1$ be the set of all maximal elements of $E_Q$. Inductively define $L_n$ to be the set of maximal elements of the poset $E_Q\-(L_1\cup \cdots \cup L_{n-1})$ if the latter is non empty, otherwise $L_n=\emptyset$.  

Let $\bv=(v_1,\ldots,v_n)\in \Delta(E_Q)$. If $\supp(\bv) \subset L_1$, then every element in $\supp(\bv)$ is minimal in $\supp(\bv)$ and we can write $\bv=\sum _{i\in \supp(\bv)}n_ie_i$ where $n_i>0$. Hence, the element $[\sM]=[\bigoplus_{i\in \supp(\bv)} \sP_{\nu_i}^{n_i}]$ is in $K_0^+(\QGr kQ)$ and $\psi([\sM])=\bv$.

Suppose we have shown every element $\bv \in \Delta(E_Q)$ for which $\supp(\bv) \subset L_1\cup \cdots \cup L_r$ is in $\psi(K_0^+(\QGr kQ))$. 

Consider an element 
$$
n_ie_i+\sum_{i\prec k}z_ke_k \in \Delta(E_Q)
$$
where $i\in L_{r+1}$ and $n_i\geq 1$. Let $i_1,\ldots,i_l$ be all the elements in $E_Q$ which are minimal over $i$. Since $\nu_{i_j}$ is minimal over $\nu_i$, Propositions \ref{prop.minnu} and \ref{prop.bij(z)} imply $b_{i,i_j}(z)=b_{i,i_j}z$ and $b_{i,i_j}=a_{\nu_i,\nu_{i_j}}(2)-a_{\nu_i,\nu_{i_j}}>0$. Hence, we can find a $z\in \ZZ$ such that $z_{i_j}-b_{i,i_j}(z)>0$ for all $i_1,\ldots,i_l$. 

Write
$$
n_ie_i+\sum_{i\prec k}z_ke_k=(n_i-1)e_i+e_i+\sum_{i\prec k}b_{ik}(z)e_k+\sum_{i\prec k}(z_k-b_{ik}(z))e_k.
$$
Notice the support of the vector $\bv=(v_1,\ldots,v_p)=\sum_{i\prec k}(z_k-b_{ik}(z))e_k$ is contained in $(i,\infty)=\{k\;|\; i\prec k\} \subset L_1\cup \cdots \cup L_r$. Also, by the choice of $z$, the term $v_{i_j}=z_{i_j}-b_{i,i_j}(z)>0$ for all $i_j$ minimal over $i$. By Lemma \ref{lem.altdelta}, $\bv\in \Delta(E_Q)$ since $v_k>0$ for all $k$ minimal in $\supp(\bv)$. 

Since $\bv \in \Delta(E_Q)$ and $\supp(\bv)\subset L_1\cup \cdots \cup L_r$, we know $\bv=\psi([\sM])$ for some finitely generated projective object $\sM \in \QGr kQ$. Therefore,
$$
\psi([\sP_{\nu_i}^{n_i-1}\oplus \sP_{\nu_i}(z)\oplus \sM])=(n_i-1)e_i+e_i+\sum_{i\prec k}b_{ik}(z)e_k+\bv=n_ie_i+\sum_{i\prec k}z_ke_k.
$$

Thus, every element of the form $n_ie_i+\sum_{i\prec k}z_ke_k$ with $i\in L_{r+1}$ is in $\psi(\K0^+(\QGr kQ))$. If $\bv \in \Delta(E_Q)$ and $\supp(\bv) \subset L_1\cup \cdots \cup L_r\cup L_{r+1}$, then we can write $\bv=\sum_{i\in L_1\cup \cdots \cup L_{r+1}}(n_ie_i+\sum_{i\prec k}z^i_ke_k)$ where the $n_i \geq 0$ and $z^i_k=0$ for all $i\prec k$ if $n_i=0$. Since every element of the form $n_ie_i+\sum_{i\prec k}z_k^ie_k$ appearing in $\bv$ is in $\psi(\K0^+(\QGr kQ))$, we get $\bv\in \psi(\K0^+(\QGr kQ))$. Therefore, every vector $\bv\in \Delta(E_Q)$ with $\supp(\bv)\subset L_1\cup \cdots \cup L_{r+1}$, is in the image of $\K0^+(\QGr kQ)$.

Since $E_Q \subset L_1\cup \cdots \cup L_n$ for $n\gg 0$, we get by induction that $\Delta(E_Q)\subset \psi(\K0^+(\QGr kQ))$.

Thus, $\psi:\K0(\QGr kQ) \to \ZZ^p$ is a group isomorphism such that 
$$
\psi(\K0^+(\QGr kQ))=\Delta(E_Q)
$$ 
and Theorem \ref{thm.k0qgrkQ} is proved.

\section{An explicit equivalence.}
\subsection{}
The proof of Theorem \ref{thm.main3Q} using Grothendieck groups gives the existence of an equivalence $\QGr kQ \equiv \QGr kQ'$ when $E_Q=E_{Q'}$. This Section gives an explicit description of an equivalence.  

Given a path algebra $kQ$ of finite GK-dimension, the element $(1,1,\cdots,1) \in \Delta(E_Q)\subset Z^p$ is an order unit. To see this just note that for any $(v_1,\ldots,v_p)\in \ZZ^p$, 
$$
n(1,\ldots,1)-(v_1,\ldots,v_p)\in \NN^p \subset \Delta(E_Q)
$$ 
for $n\gg 0$. 

The isomorphism from $\K0(\QGr kQ) \to \ZZ^p$ was constructed by first using the Veronese equivalence $V:\QGr kQ \to \QGr kQ^{(n)}$ for a sufficient choice of $n$ and then mapping the elements $[\pi^*e_{\nu_i}kQ^{(n)}] \to e_i$ where the $\nu_i$ are the cyclic vertices of $Q$. Let $P=e_{\nu_1}kQ\oplus \cdots \oplus e_{\nu_p}kQ\in \Gr kQ$. Then under the isomorphism $\K0(\QGr kQ) \to \ZZ^p$;
$$
\UseComputerModernTips
\xymatrix{{[\oplus_{i=1}^p \pi^*e_{\nu_i}kQ]} \ar@{|->}[r]^{V} & {[\oplus_{i=1}^p \pi^*e_{\nu_i}kQ^{(n)}]} \ar@{|->}[r]^(0.6){\psi} & {(1,\ldots,1)} }
$$
Therefore, the element $[\pi^*P]$ is an order unit in $\K0(\QGr kQ)$. 

For an ultramatricial algebra $A$, a finitely generated projective module is an order unit in $\K0(A)$ if and only if it is a generator, see Chapter $20$ of \cite{Good}. Hence, $\pi^*P$ is a finitely generated projective generator in the category $\QGr kQ\equiv \Mod S(Q)$ which implies there is an equivalence of categories
$$
\Hom_{\QGr kQ}(\pi^*P,-):\QGr kQ \to \Mod \End_{\QGr kQ}(\pi^*P).
$$

Following the ideas in \cite{Sm1}, we can show $\End_{\QGr kQ}(\pi^*P)$ is an ultramatricial algebra.

A basis of the graded vector space $P$ consists of all paths in $Q$ that begin at a cyclic vertex. The torsion elements of $P$ are all paths which end at a vertex $v$ which is the source of only finitely many paths. Hence, the torsion submodule $\tau P$ has a basis consisting of all paths in $Q$ that start at a cyclic vertex and end at a vertex $v$ which is the source of only finitely many paths. The natural quotient map $P \to P/\tau P$ is an isomorphism in $\QGr kQ$ and thus induces an isomorphism of algebras
$$
\End_{\QGr kQ}(\pi^*P) \cong \End_{\QGr kQ}(\pi^* (P/\tau P)).
$$
Write $P'$ for $P/\tau P$. 

By definition,
$$
\End_{\QGr kQ}(\pi^*(P'))=\varinjlim \Hom_{\Gr kQ}(M,P'/N)
$$
where the direct limit is over all pairs $M,N$ such that $P'/M$ and $N$ are torsion. Since $P'$ is a finitely generated module, if $P'/M$ is torsion then $P'/M$ must be finite-dimensional. Thus, $M\supset P'_{\geq n}$ for some $n$. As $P'$ is torsion free it follows that
$$
\End_{\QGr kQ}(\pi^*P')=\varinjlim_{n}\Hom_{\Gr kQ}(P'_{\geq n},P')
$$
where the structure maps in the direct limit are given by restriction of morphisms.

Since graded morphisms must preserve degree, 
$$
\Hom_{\Gr kQ}(P'_{\geq n},P') \cong \Hom_{\Gr kQ}(P'_{\geq n},P'_{\geq n}))
$$ 
as vector spaces and thus there is an isomorphism of vector spaces
$$
\End_{\QGr kQ}(\pi^*P') \cong \varinjlim_{n}\Hom_{\Gr kQ}(P'_{\geq n},P'_{\geq n})
$$ 
However, from the definition of composition in $\QGr kQ$ it can be checked that this isomorphism of vector spaces respects composition and is thus a $k$-algebra isomorphism. 

As $P'_{\geq n}=P'_{n}kQ$, each morphism $\phi \in \Hom_{\Gr kQ}(P'_{\geq n},P'_{\geq n})$ is completely determined by $\phi|_{P'_{n}}:P'_n \to P'_n$ which is a morphism of right $kQ_0$-modules. Hence, there is an injective $k$-algebra map 
$$
\Hom_{\Gr kQ}(P'_{\geq n},P'_{\geq n}) \to \Hom_{kQ_0}(P'_{n},P'_{n}).
$$

Suppose $\phi \in \Hom_{kQ_0}(P'_n,P'_n)$. Given a path $p=a_1\cdots a_n\cdots a_{n+i}$ of length $n+i$ which is not in $\tau P$, define
$$
\phi(p)=\phi(a_1\cdots a_n)a_{n+1}\cdots a_i \in P'.
$$ 
Since paths in $P$ not in $\tau P$ form a basis for $P'=P/\tau P$, this extension of $\phi$ defines a graded vector space map $\phi:P'_{\geq n} \to P'_{\geq n}$ which can be seen to be morphism of graded $kQ$-modules. Hence, the natural map $\Hom_{\Gr kQ}(P'_{\geq n}) \to \Hom_{kQ_0}(P'_n,P'_n)$ is an isomorphism.

As a vector space, we can write $P_n'=\oplus_{i\in Q_0} P_n'e_i$. Any right $kQ_0$ module morphism $\phi:P_n'\to P_n'$ must satisfy $\phi(P_n'e_i)\subset P_n'e_i$. Hence, any $\phi \in \End_{kQ_0}(P'_n)$ is completely determined by linear transformations $\phi_i:P_n'e_i \to P_n'e_i$. Moreover, given linear maps $\psi_i:P_n'e_i \to P_n'e_i$, there is a right $kQ_0$-module homomorphism $\psi:P_n'\to P_n'$ such that $\psi|_{P_n'e_i}=\psi_i$. Hence, $\Hom_{kQ_0}(P'_n,P'_n)$ is a product of matrix algebras.  

Thus, the endomorphism algebra $\End_{\QGr kQ}(\pi^*P) \cong \End_{\QGr kQ}(\pi^* P')$ is a direct limit of matricial algebras and is thus an ultramatricial algebra.

\begin{Prop}
Let $kQ$ be a path algebra of finite GK-dimension. Let $P=\oplus e_{\nu_i}kQ$ where the sum is over all cyclic vertices. Then $\End_{\QGr kQ}(\pi^*P)$ is an ultramatricial algebra. Moreover, $\pi^*P$ is a finitely generated projective generator so the functor $\Hom_{\QGr kQ}(\pi^*P,-)$ is an equivalence of categories
$$
\QGr kQ \equiv \Mod \End_{\QGr kQ}(\pi^*P).
$$
\end{Prop}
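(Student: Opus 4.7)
The plan is to verify three things in turn: (a) $\pi^*P$ is a finitely generated projective object, (b) $\End_{\QGr kQ}(\pi^*P)$ is ultramatricial, and (c) $\pi^*P$ is a generator of $\QGr kQ$; the equivalence then follows from standard Morita theory in a Grothendieck category.

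For (a), since $P=\bigoplus e_{\nu_i}kQ$ is a finite direct sum and Smith's projectivity result (recalled in Section~\ref{sec.cyclicpoints}) gives that each $\pi^*e_{\nu_i}kQ$ is a finitely generated projective in $\QGr kQ$, the exactness of $\pi^*$ and preservation of finite coproducts immediately yield that $\pi^*P$ is a finitely generated projective object. For (b), I would adapt the computation already set up in the paragraphs preceding the proposition: replacing $P$ by $P':=P/\tau P$ (an isomorphism under $\pi^*$) one has
$$
\End_{\QGr kQ}(\pi^*P') \;\cong\; \varinjlim_n \Hom_{\Gr kQ}(P'_{\geq n},P')\;\cong\; \varinjlim_n \End_{\Gr kQ}(P'_{\geq n}),
$$
where the second isomorphism uses that $P'$ is torsion-free and graded. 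Restriction to the bottom degree identifies $\End_{\Gr kQ}(P'_{\geq n})$ with $\End_{kQ_0}(P'_n)$, and since $P'_n=\bigoplus_{i\in Q_0} P'_n e_i$ as a $kQ_0$-module, this endomorphism ring decomposes as a finite product $\prod_i \End_k(P'_ne_i)$ of matrix algebras over $k$. Thus $\End_{\QGr kQ}(\pi^*P)$ is a directed union of matricial algebras, hence ultramatricial.

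For (c), I would use the order-unit characterization of generators for ultramatricial algebras. Under the isomorphism $\psi:\K0(\QGr kQ)\xrightarrow{\sim}\ZZ^p$ of Theorem~\ref{thm.k0qgrkQ}, the class $[\pi^*P]=\sum_i[\pi^*e_{\nu_i}kQ]$ maps to $(1,\ldots,1)\in\Delta(E_Q)$, which is an order unit since $n(1,\ldots,1)-\bv\in\NN^p\subset\Delta(E_Q)$ for all $\bv\in\ZZ^p$ and $n\gg 0$. Transporting along the equivalence $\QGr kQ\equiv\Mod S(Q)$ of Smith, which preserves $\K0$ as a pre-ordered group, the class $[\pi^*P]$ corresponds to the class of a finitely generated projective $S(Q)$-module that is an order unit. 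By Goodearl's theorem for ultramatricial algebras (Chapter~20 of \cite{Good}), a finitely generated projective module over such an algebra is a generator if and only if its class in $\K0$ is an order unit, so $\pi^*P$ is a generator of $\QGr kQ$.

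Finally, $\QGr kQ$ is a Grothendieck category, and $\pi^*P$ is a finitely generated (hence compact) projective generator. The standard Morita-type equivalence for Grothendieck categories with a compact projective generator then gives that
$$
\Hom_{\QGr kQ}(\pi^*P,-):\QGr kQ \;\longrightarrow\; \Mod \End_{\QGr kQ}(\pi^*P)
$$
is an equivalence of categories. The main obstacle is really the order-unit/generator translation in step (c); modulo Goodearl's theorem and the computation of $\K0$ already carried out in the previous section, every other step is routine.
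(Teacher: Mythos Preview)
Your proposal is correct and follows essentially the same route as the paper: projectivity of $\pi^*P$ comes from Smith's result on $\pi^*kQ$, the ultramatricial structure of $\End_{\QGr kQ}(\pi^*P)$ is obtained via the same direct-limit computation $\varinjlim_n \End_{kQ_0}(P'_n)$ after passing to $P'=P/\tau P$, and the generator property is deduced from the order-unit criterion in $\K0$ together with Goodearl's theorem, using the identification $[\pi^*P]\mapsto(1,\ldots,1)$ from Theorem~\ref{thm.k0qgrkQ}. The only cosmetic difference is the ordering of the steps and that the paper spells out the Veronese passage $V$ when tracking $[\pi^*P]\mapsto(1,\ldots,1)$, a detail you are implicitly relying on.
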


From the equivalence $\Mod \End_{\QGr kQ}(\pi^*P) \equiv \QGr kQ$, we get an isomorphism of pre-ordered abelian groups 
$$
(\K0(\End_{\QGr kQ}(\pi^*P)),\K0^+(\End_{\QGr kQ}(\pi^*P)) \cong (\ZZ^p,\Delta(E_Q))
$$
which sends $[\End_{\QGr kQ}(\pi^*P)]$ to the element $(1,\ldots,1)$.

Suppose $kQ'$ is another path algebra of finite GK-dimension and $E_{Q'}=E_Q$. Let $P'$ be the element $\oplus e_{\mu_i}kQ'$ where the sum is over cyclic vertices of $Q'$. Then we get isomorphisms of preordered abelian groups
\begin{eqnarray*}
(\K0(\End_{\QGr kQ}(\pi^*P)),\K0(\End_{\QGr kQ}(\pi^*P)^+) \to (\ZZ^p,\Delta(E_Q)) = \\
(\ZZ^p,\Delta(E_{Q'}) \to  (\K0(\End_{\QGr kQ'}(\pi^*P')),\K0(\End_{\QGr kQ'}(\pi^*P')^+)
\end{eqnarray*}
where $[\End_{\QGr kQ}(\pi^*P)] \mapsto [\End_{\QGr kQ'}(\pi^*P')]$. 

Hence, $\End_{\QGr kQ}(\pi^*P)$ and $\End_{\QGr kQ'}(\pi^*P')$ are ultramatricial algebras for which there is an isomorphism of their Grothendieck groups which sends the order unit $[\End_{\QGr kQ}(\pi^*P)]$ to the order unit $[\End_{\QGr kQ'}(\pi^*P')]$. The following Theorem in \cite{Good} then implies $\End_{\QGr kQ}(\pi^*P)$ and $\End_{\QGr kQ'}(\pi^*P')$ are isomorphic as $k$-algebras.

\begin{Thm} [\cite{Good}, Thm $15.26$ page 221] Let $R$ and $S$ be ultrmatricial algebras. If there is an isomorphism $(\K0(R),\K0^+(R)) \to (\K0(S),\K0^+(S))$ which sends $[R] \to [S]$, then $R\cong S$ as $k$-algebras.
\end{Thm}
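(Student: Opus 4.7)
The statement is the algebraic version of Elliott's classification of AF-algebras. My plan is to prove it by an intertwining argument after first calibrating what invariants and lifting properties matricial algebras have.

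The first step is to establish the classification of \emph{matricial} algebras as ordered groups with order unit. If $A=\prod_{i=1}^r M_{n_i}(k)$, then $\K0(A)=\ZZ^r$ with positive cone $\NN^r$ and order unit $(n_1,\ldots,n_r)$; conversely, any such triple determines $A$ up to $k$-algebra isomorphism. The second calibration step is the lifting lemma: given matricial algebras $A,B$ and a positive group homomorphism $\theta:\K0(A)\to \K0(B)$ sending $[A]$ to an element $\le [B]$, there exists a unital (resp.\ non-unital if $\theta([A])<[B]$) algebra homomorphism $A\to B$ inducing $\theta$ on $\K0$, and any two such homomorphisms inducing the same map on $\K0$ are conjugate by an inner automorphism of $B$. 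This follows by realizing $\theta$ through a matrix of nonneg\-a\-tive integers describing block multiplicities and then writing down the corresponding block diagonal embedding.

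With these tools, I would run the Elliott intertwining. Write $R=\varinjlim R_n$ and $S=\varinjlim S_n$ with matricial $R_n,S_n$ and unital structure maps $\iota_n:R_n\hookrightarrow R_{n+1}$, $\jmath_n:S_n\hookrightarrow S_{n+1}$. Because $\K0(R)=\varinjlim \K0(R_n)$ with order unit $[R]=\lim[R_n]$ (and similarly for $S$), the finitely generated subgroup $\K0(R_n)\subset \K0(R)$ and its image $\phi(\K0(R_n))\subset \K0(S)$ is finitely generated, hence contained in some $\K0(S_{f(n)})$; the resulting map $\K0(R_n)\to \K0(S_{f(n)})$ is positive. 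By the lifting lemma, realize it by an algebra homomorphism $\alpha_n:R_n\to S_{f(n)}$. Symmetrically build $\beta_m:S_m\to R_{g(m)}$ from $\phi^{-1}$. By choosing $f,g$ to grow fast enough and re-indexing, arrange that $\beta_{f(n)}\circ\alpha_n$ and the inclusion $R_n\to R_{g(f(n))}$ induce the same map on $\K0$; then by the uniqueness clause of the lifting lemma they are conjugate by an inner automorphism of $R_{g(f(n))}$. Absorb that inner automorphism into $\alpha_{n+1}$ (so that the intertwining triangles commute exactly rather than up to inner automorphism) and do the same for $\alpha\circ\beta$; the order-unit hypothesis $\phi([R])=[S]$ ensures the absorbed maps remain unital in the limit. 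Passing to the direct limit produces mutually inverse $k$-algebra homomorphisms $R\to S$ and $S\to R$, hence the desired isomorphism.

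The main obstacle is the inductive bookkeeping in the intertwining: one has to simultaneously (i) choose $\alpha_n,\beta_n$ lifting the $\K0$ maps, (ii) use the inner-automorphism freedom to force the finite triangles $\beta_{f(n)}\alpha_n\sim \iota$ and $\alpha_{g(n)}\beta_n\sim \jmath$ to commute on the nose up to stage $n$, and (iii) preserve all the commutations from earlier stages while doing so. The Elliott perturbation trick — conjugate the next map to fix the discrepancy with the previous one, which is possible because the discrepancy is an inner automorphism of a matricial algebra — is what makes this work, and it relies crucially on the fact that unitarily (here: inner-automorphism) equivalent embeddings of matricial algebras induce the same map on $\K0$, together with the semisimplicity of matricial algebras which makes inner automorphisms abundant.
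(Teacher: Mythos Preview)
The paper does not give its own proof of this statement: it is quoted verbatim as Theorem 15.26 from Goodearl's book \cite{Good} and used as a black box to deduce that $\End_{\QGr kQ}(\pi^*P)\cong \End_{\QGr kQ'}(\pi^*P')$. So there is nothing in the paper to compare your proposal against.

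That said, your outline is the standard Elliott intertwining argument and is essentially what appears in \cite{Good}. The two calibration steps you isolate---classification of matricial algebras by $(\K0,\K0^+,[1])$ and the existence/uniqueness-up-to-inner-automorphism of liftings of positive $\K0$-maps between matricial algebras---are exactly the ingredients Goodearl develops in the lead-up to Theorem 15.26, and the intertwining you describe is the content of that proof. Your description of the bookkeeping (absorbing inner automorphisms at each stage so that triangles commute on the nose, and using $\phi([R])=[S]$ to keep everything unital) is correct; there is no gap.
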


Hence, we now have an explicit description of the equivalence $\QGr kQ \equiv \QGr kQ'$. 
$$
\UseComputerModernTips
\xymatrix{ {\QGr kQ}\ar[r]^(0.35){\equiv} & {\Mod \End(\pi^*P)}\ar[r]^{\equiv} & {\Mod \End(\pi^*P')} & {\QGr kQ'} \ar[l]_(0.35){\equiv} }
$$

The first and third equivalences are coming from ``homing'' out of the objects $\pi^*P$ and $\pi^*P'$ respectively while the middle is due to the isomorphism 
$$
\End_{\QGr kQ}(\pi^*P)\cong \End_{\QGr kQ'}(\pi^*P').
$$

\section{Path Algebras of GK-dimension one.}
\subsection{}
This last section shows $\QGr kQ$ is a semisimple category when the path algebra has GK-dimension $1$. This also takes care of the left and right Noetherian path algebras as such algebras will be shown to have GK-dimension at most $1$.

Suppose $kQ$ is a path algebra of GK-dimension one. Given any cycle $C$ in $Q$, there are no cycles larger than $C$. Hence, if $v$ is a cyclic vertex, then there are only finitely many paths that start at $v$ which do not end at a vertex in $C$. Using the notation set up at the beginning of section \ref{sec.cyclicpoints}, the projective resolution for $P_v$ is 
$$
\UseComputerModernTips
\xymatrix{ {0}\ar[r] & {\bigoplus p^ma_1\cdots a_ibkQ}\ar[r]^(0.7){\i} & {e_vkQ}\ar[r] & {P_v}\ar[r] & {0.} }
$$
Since there are only finitely many paths of the form $bq$ when $b\neq a_i$ starts at a cyclic vertex, we know 
$$
p^ma_1\cdots a_ibkQ
$$ 
is finite dimensional and hence torsion. Hence, pushing the above exact sequence through $\pi^*$ induces an isomorphism
$$
\pi^*(e_vkQ)\cong \pi^*\cO_v.
$$
This shows in particular, that the simple objects in $\QGr kQ$ are projective.

The progenerator $\pi^*P=\oplus \pi^*e_{\nu_i}kQ$ is therefore isomorphic to the sum of the cyclic point modules
$$
\pi^*P\cong \bigoplus \pi^*\cO_{\nu_i}.
$$

\begin{lemma} \label{lemma.endoisk}
Let $M$ be a point module over an $\NN$-graded $k$-algebra $A$. Then
$$
\End_{\QGr A}(\pi^*M) \cong k.
$$
\end{lemma}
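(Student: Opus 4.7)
The plan is to reduce the endomorphism ring to a direct limit of ordinary $\Gr A$-Hom spaces, and then to observe that each such space is forced to be one-dimensional because $M$ has one-dimensional graded pieces.

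The first (and crucial) step is to show that $M$ is torsion-free. Since $A$ is generated by $A_1$ over $A_0$, we have $A_n = A_1^n$, and since $M = M_0A$ this gives $M_n = M_{n-1}A_1$ for every $n \geq 1$. Choosing a basis vector $m_n$ of each one-dimensional $M_n$, this equality forces $m_{n-1}A_1 = km_n$, and inductively $m_kA_{n-k} = km_n$ for every $0 \leq k \leq n$. In particular no $m_k$ can be killed by $A_{\geq N}$ for any $N$, so $M$ is torsion-free and, as a bonus, each truncation $M_{\geq n}$ is cyclic with generator $m_n$.

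Next, using that $M$ is finitely generated (in fact cyclic) and torsion-free, any graded submodule $M' \subseteq M$ with torsion cokernel must contain $M_{\geq n}$ for some $n$, and the only torsion submodule of $M$ is $0$. Therefore
\[
\End_{\QGr A}(\pi^*M) \;=\; \varinjlim_{n}\,\Hom_{\Gr A}(M_{\geq n},\,M),
\]
with the structure maps being restriction (and composition in $\QGr A$ becoming honest composition of maps in the direct system).

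The final step is to compute each term. A graded morphism $\phi:M_{\geq n}\to M$ is determined by $\phi(m_n)$, which must lie in $M_n = km_n$, so $\phi(m_n) = \lambda m_n$ for a unique $\lambda\in k$, and then $\phi$ is simply $\lambda$ times the inclusion $M_{\geq n}\hookrightarrow M$ (using that $m_n$ generates $M_{\geq n}$). Hence $\Hom_{\Gr A}(M_{\geq n},M) \cong k$, the restriction map to $M_{\geq n+1}$ sends the scalar $\lambda$ to itself (since $m_{n+1} = m_n x$ for some $x\in A_1$ and $\phi(m_{n+1}) = \lambda m_n x = \lambda m_{n+1}$), and composition of two such morphisms corresponds to multiplication of scalars. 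The direct limit is therefore $k$, as a $k$-algebra.

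The main obstacle is really the torsion-freeness observation in the first step; without it one would have to juggle the quotient $M/\tau M$ and verify it inherits enough of the point-module structure to repeat the argument. Once torsion-freeness is in hand, everything else is a direct-limit bookkeeping that falls out of $\dim_k M_n = 1$.
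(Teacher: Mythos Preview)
Your proof is correct and follows essentially the same route as the paper: reduce $\End_{\QGr A}(\pi^*M)$ to $\varinjlim_n \Hom_{\Gr A}(M_{\geq n},M)$, identify each term with $k$, and check the transition maps are the identity. The only cosmetic difference is that the paper bypasses your separate torsion-freeness and ``contains $M_{\geq n}$'' arguments by observing directly that every nonzero graded submodule of a point module equals some $M_{\geq n}$; your version is slightly longer but equally valid.
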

\begin{proof}
Write $M=\bigoplus ke_i$ where $e_i$ is a basis element for $M_i$. By definition,
$$
\End_{\QGr A}(\pi^*M)=\varinjlim \Hom_{\Gr A}(M',M/M'')
$$
where the direct limit is over all graded submodules $M',M''$ of $M$ such that $M/M'$ and $M''$ are torsion. Since $M$ is a point module, the only non-zero graded submodules are those of the form $M_{\geq n}$ for $n\geq 0$. Since none of these submodules are torsion and all have torsion cokernels, we get
$$
\End_{\QGr kQ}(\pi^*M)=\varinjlim_{n}\Hom_{\Gr A}(M_{\geq n},M). 
$$
If $\phi :M_{\geq n} \to M$, then for all $i\geq n$, there exists $\beta_i\in k$ such that $\phi(e_i)=\beta_ie_i$. Since $M$ is a point module, there exists $a_i\in A_1$ such that $e_{i+1}=e_ia_i$. Hence,
$$
\beta_ie_{i+1}=\beta_ie_ia_i=\phi(e_i)a_i=\phi(e_ia_i)=\phi(e_{i+1})=\beta_{i+1}e_{i+1}
$$ 
which implies $\beta_i=\beta_n$ for all $i\geq n$. This shows
$$
\Hom_{\Gr A}(M_{\geq n},M)\cong k
$$
for all $n\in \NN$. Moreover, the canonical map 
$$
\Hom_{\Gr A}(M_{\geq n},M)\to \Hom_{\Gr A}(M_{\geq n+i},M),
$$ 
which determines the direct limit, is the identity when the $\Hom$ spaces are identified with $k$. Thus,
$$
\End_{\QGr kQ}(\pi^*M)\cong k.
$$
\end{proof}

\begin{proposition}
Let $kQ$ be a path algebra of GK-dimension one. Let $P=\oplus e_ikQ$ be the object defined above. Then
$$
\End_{\QGr kQ}(\pi^*P) \cong k^n
$$
where $n$ is the number of cyclic vertices in $Q$.
\end{proposition}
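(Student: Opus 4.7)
The plan is essentially to assemble the ingredients already built up in the paper: identify each summand $\pi^*e_{\nu_i}kQ$ of $\pi^*P$ with the corresponding cyclic point module $\pi^*\cO_{\nu_i}$, and then apply the two key Hom-computations (Lemma \ref{lem.cyclicpointsarediff} and Lemma \ref{lemma.endoisk}) to reduce the endomorphism ring of a finite direct sum to a product.

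First I would invoke the observation made immediately before Lemma \ref{lemma.endoisk}, which showed that when $\GKdim kQ = 1$, the cokernel $\bigoplus p^m a_1\cdots a_i b\, kQ$ in the projective resolution of $\cO_v$ is finite-dimensional, so $\pi^*(e_v kQ) \cong \pi^*\cO_v$ for each cyclic vertex $v$. Applied summand by summand, this gives an isomorphism $\pi^*P \cong \bigoplus_{i=1}^{n} \pi^*\cO_{\nu_i}$ in $\QGr kQ$.

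Next I would compute the endomorphism ring of this finite direct sum via the standard matrix-of-Homs decomposition:
$$
\End_{\QGr kQ}\!\left(\bigoplus_{i=1}^{n}\pi^*\cO_{\nu_i}\right) \cong \bigoplus_{i,j=1}^{n}\Hom_{\QGr kQ}(\pi^*\cO_{\nu_i},\pi^*\cO_{\nu_j}),
$$
with multiplication given by matrix composition. Lemma \ref{lem.cyclicpointsarediff} kills the off-diagonal entries ($\Hom_{\QGr kQ}(\cO_{\nu_i},\cO_{\nu_j})=0$ for $i\neq j$), and Lemma \ref{lemma.endoisk} applied to the point module $\cO_{\nu_i}$ identifies each diagonal entry with $k$. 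Thus the matrix ring degenerates to the diagonal, giving $\End_{\QGr kQ}(\pi^*P) \cong \prod_{i=1}^n k = k^n$ as a $k$-algebra.

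There is no real obstacle here; every non-trivial input has already been established. The only thing to check carefully is that the isomorphism of endomorphism rings above preserves multiplication (it does, since composition in an additive category with a finite biproduct is literally matrix multiplication of component morphisms), and that the finite direct sum $\bigoplus \pi^*\cO_{\nu_i}$ commutes with $\pi^*$ applied to the summands — both are routine.
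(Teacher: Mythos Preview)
Your proof is correct and follows essentially the same route as the paper: identify $\pi^*P$ with $\bigoplus \pi^*\cO_{\nu_i}$, then use Lemma~\ref{lemma.endoisk} for the diagonal blocks and the vanishing of Homs between distinct cyclic point modules for the off-diagonal blocks. If anything, your citation of Lemma~\ref{lem.cyclicpointsarediff} for the off-diagonal vanishing is slightly more precise than the paper's, which just notes $\cO_v\not\cong\cO_w$ and implicitly invokes Schur's lemma for simple objects.
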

\begin{proof}
By Lemma \ref{lemma.endoisk} we know $\End_{\QGr kQ}(\cO_v)=k$ for each cyclic vertex $v$. Moreover, $\cO_v \not\cong \cO_w$ if $v\neq w$. Hence,
$$
\End_{\QGr kQ}(\pi^*P)\cong \prod \End_{\QGr kQ}(\cO_v)\cong k^n
$$
where $n$ is the number of cyclic vertices.
\end{proof}

\begin{Thm} \label{thm.gk1thm}
Let $kQ$ be a path algebra of GK-dimension one. Then there is an equivalence of categories
$$
\QGr kQ \equiv \Mod k^n
$$
where $n$ is the number of cyclic vertices in $Q$.
\end{Thm}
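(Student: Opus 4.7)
The plan is to combine the two main results already established in the preceding subsection. First, from the discussion in the section \emph{An explicit equivalence}, the object $\pi^*P = \pi^*\bigl(\bigoplus_{\nu_i \text{ cyclic}} e_{\nu_i}kQ\bigr)$ is a finitely generated projective generator of $\QGr kQ$ (this uses the fact that $(1,\ldots,1) \in \Delta(E_Q)$ is an order unit of $\K0(\QGr kQ)$, combined with the characterization of generators over ultramatricial algebras). Consequently, the functor
$$
\Hom_{\QGr kQ}(\pi^*P, -) : \QGr kQ \longrightarrow \Mod \End_{\QGr kQ}(\pi^*P)
$$
is an equivalence of categories.

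Second, the proposition immediately preceding the theorem computes
$$
\End_{\QGr kQ}(\pi^*P) \cong k^n,
$$
where $n$ is the number of cyclic vertices. This computation rested on the special feature of GK-dimension one: for every cyclic vertex $v$, the submodule $\bigoplus p^m a_1 \cdots a_i b\, kQ$ appearing in the projective resolution of $\cO_v$ is finite dimensional, so $\pi^*(e_v kQ) \cong \pi^*\cO_v$, and then Lemma~\ref{lemma.endoisk} gives $\End_{\QGr kQ}(\cO_v)\cong k$ with no nonzero morphisms between distinct $\cO_v$, $\cO_w$ (by Lemma~\ref{lem.cyclicpointsarediff}).

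Putting these two ingredients together yields the theorem:
$$
\QGr kQ \;\equiv\; \Mod \End_{\QGr kQ}(\pi^*P) \;\equiv\; \Mod k^n.
$$
There is no real obstacle — the bulk of the work has been done in the preceding results, and the proof is simply a concatenation of the two equivalences. The only thing worth emphasizing is that the GK-dimension one hypothesis is what reduces the endomorphism ring from a general ultramatricial algebra to the finite-dimensional semisimple algebra $k^n$; geometrically, it reflects the fact that in dimension one every simple point module is already projective in $\QGr kQ$, so $\QGr kQ$ splits as a product of copies of $\Mod k$, one for each cyclic vertex.
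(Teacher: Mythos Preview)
Your proposal is correct and follows exactly the paper's own proof: combine the equivalence $\QGr kQ \equiv \Mod \End_{\QGr kQ}(\pi^*P)$ established in the section on explicit equivalences with the preceding proposition's computation $\End_{\QGr kQ}(\pi^*P)\cong k^n$. Your surrounding explanation of why GK-dimension one forces $\pi^*(e_{\nu_i}kQ)\cong \pi^*\cO_{\nu_i}$ and how Lemmas~\ref{lemma.endoisk} and~\ref{lem.cyclicpointsarediff} then yield the endomorphism ring is accurate and matches the paper's discussion leading up to the theorem.
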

\begin{proof}
This follows from the equivalence $\QGr kQ \equiv \Mod \End_{\QGr kQ}(\pi^*P)$ and the isomorphism $\End_{\QGr kQ}(\pi^*P)\cong k^n$ where $n$ is the number of cyclic vertices.
\end{proof}

\subsection{Noetherian path algebras.}
As before, path algebras are assumed to be have infinite dimension.
 
There is a simple condition on the quiver $Q$ which determines when $kQ$ is right or left Noetherian. 

\begin{itemize}
\item A path algebra $kQ$ is left Noetherian if and only if for each cyclic vertex $v$, there is only one arrow whose target is $v$.
\item A path algebra $kQ$ is right Noetherian if	 and only if for each cyclic vertex $v$, there is only one arrow whose source is $v$.
\end{itemize}

Notice that the presence of a doubly cyclic vertex implies the existence of a cyclic vertex which is the source of at least two distinct arrows. Hence, a left or right Notherian path algebra has no doubly cyclic vertices. Therefore, if $kQ$ is left or right Noetherian, then $kQ$ has finite GK-dimension. 

If $C_1$ and $C_2$ are distinct simple cycles of $Q$, then a path from a vertex of $C_1$ to vertex of $C_2$ implies there are cyclic vertices $v$ and $u$ such that $v$ is the source of more than one arrow and $u$ is the target of more than one arrow. Hence, there can be no paths between simple cycles in $Q$ if $kQ$ is left or right Noetherian. Therefore, if $kQ$ is left or right Noetherian, then every element of the poset $\mC(Q)$ is maximal (and minimal) which implies $kQ$ has GK-dimenion one.    

Since a right or left Noetherian path algebra has GK-dimension one, Theorem \ref{thm.gk1thm} has the following corollary.

\begin{corollary}
Let $kQ$ be a left or right Noetherian path algebra. Then
$$
\QGr kQ \equiv \Mod k^n
$$
where $n$ is the number of cyclic vertices in $Q$.
\end{corollary}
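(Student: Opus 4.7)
The plan is to reduce this Corollary to Theorem \ref{thm.gk1thm} by verifying that a left or right Noetherian (infinite-dimensional) path algebra necessarily has GK-dimension one. Once the GK-dimension is pinned down to be exactly $1$, the Corollary is immediate from the preceding theorem, since $n$ is defined as the number of cyclic vertices in both statements. So the only real work is extracting ``GK-dimension one'' from the Noetherian hypothesis, using the characterizations of left/right Noetherianness stated in the bulleted remarks above the corollary (together with Ufnarovskii's criterion, Theorem \ref{thm.ufresult}).

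First I would show $kQ$ has \emph{finite} GK-dimension, which by Theorem \ref{thm.ufresult} is equivalent to $Q$ having no doubly cyclic vertex. Suppose $v$ were doubly cyclic, so that $v$ lies on two distinct simple cycles $C_1 \neq C_2$. Then, as observed in the remark after Definition \ref{def.pathterms}, within $C_1 \cup C_2$ the vertex $v$ is the source of at least two distinct arrows (one from each simple cycle) and, symmetrically, the target of at least two distinct arrows. By the stated characterization, the first contradicts right Noetherianness and the second contradicts left Noetherianness. Hence no doubly cyclic vertex exists and $kQ$ has finite GK-dimension.

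Next I would show that in fact the poset $\mC(Q)$ is an antichain of distinct simple cycles, so that its longest totally ordered subset has size $1$ and $\GKdim kQ = 1$. If two distinct simple cycles $C_1, C_2$ satisfied $C_1 \preceq C_2$, then either they share a vertex (which as in the previous paragraph produces a cyclic vertex violating the Noetherian hypothesis) or there exists a simple chain $p$ from some vertex of $C_1$ to some vertex of $C_2$. Walking along such a chain, the cyclic vertex of $C_1$ where $p$ departs is the source of both an arrow of $C_1$ and the first arrow of $p$, contradicting right Noetherianness; dually, the arrival vertex in $C_2$ is the target of two distinct arrows, contradicting left Noetherianness. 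Thus the only relations in $\mC(Q)$ are trivial, the largest totally ordered subset has one element, and $\GKdim kQ = 1$ by Theorem \ref{thm.ufresult} (here I use that $kQ$ is infinite-dimensional, so there is at least one simple cycle and the dimension is not $0$).

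With GK-dimension equal to one established, I simply invoke Theorem \ref{thm.gk1thm} to conclude $\QGr kQ \equiv \Mod k^n$, with $n$ the number of cyclic vertices of $Q$. The main potential obstacle is the clean case analysis in the second paragraph above; nothing here is deep, but one has to be careful that the Noetherian hypothesis is being applied at a \emph{cyclic} vertex in each instance, since the characterization of Noetherianness only constrains arrows at such vertices. Otherwise the argument is entirely bookkeeping on top of the previously developed theory.
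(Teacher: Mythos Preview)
Your proposal is correct and follows essentially the same route as the paper: the paragraphs immediately preceding the corollary argue exactly as you do---a doubly cyclic vertex forces a cyclic vertex with two outgoing (and two incoming) arrows, ruling out both Noetherian conditions, and a path between distinct simple cycles does the same, so $\mC(Q)$ is an antichain and $\GKdim kQ=1$, after which Theorem~\ref{thm.gk1thm} applies. One small imprecision: when $v$ lies on two distinct simple cycles $C_1,C_2$, the outgoing arrows from $v$ in $C_1$ and $C_2$ need not themselves be distinct; rather, following the cycles from $v$ until they first diverge yields \emph{some} cyclic vertex with two distinct outgoing arrows (and dually for incoming), which is all you need.
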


\end{document}